\def \cA{\mathcal{A}}
\def \cF{\mathcal{F}}
\def \cJ{\mathcal{J}}
\def \cL{\mathcal{L}}
\def \cO{\mathcal{O}}
\def \cT{\mathcal{T}}
\def \P{\mathsf P}
\def \E{\mathsf E}
\def \N{\mathbb{N}}
\def \R{\mathbb{R}}
\def \F{\mathbb F}
\def \ud{\mathrm{d}}
\def\RdT{{\R^{d+1}_{0,T}}}
\newcommand{\roundbrackets}[1]{(#1)}
\newtheorem{theorem}{Theorem}[section]
\newtheorem{lemma}[theorem]{Lemma}
\newtheorem{corollary}[theorem]{Corollary}
\newtheorem{proposition}[theorem]{Proposition}
\newtheorem{remark}[theorem]{Remark}
\newtheorem{assumption}[theorem]{Assumption}
\theoremstyle{definition}
\title[Stopper vs.\ singular controller games with degenerate diffusions]{Stopper vs. singular controller games \\ with degenerate diffusions}
\author[Bovo]{Andrea Bovo}
\author[De Angelis]{Tiziano De Angelis}
\author[Palczewski]{Jan Palczewski}
\subjclass[2020]{91A05, 91A15, 60G40, 93E20, 49J40}
\keywords{zero-sum stochastic games, singular control, optimal stopping, degenerate diffusions, controlled diffusions, variational inequalities, obstacle problems, gradient constraint}
\address{A.\ Bovo: School of Management and Economics, Dept.\ ESOMAS, University of Torino, Corso Unione Sovietica, 218 Bis, 10134, Torino, Italy.}
\email{\href{mailto:mmab@leeds.ac.uk}{andrea.bovo@unito.it}}
\address{T.\ De Angelis: School of Management and Economics, Dept.\ ESOMAS, University of Torino, Corso Unione Sovietica, 218 Bis, 10134, Torino, Italy; Collegio Carlo Alberto, Piazza Arbarello 8, 10122, Torino, Italy.}
\email{\href{mailto:tiziano.deangelis@unito.it}{tiziano.deangelis@unito.it}}
\address{J.\ Palczewski: School of Mathematics, University of Leeds, Woodhouse Lane, LS2 9JT Leeds, UK.}\date{\today}
\email{\href{mailto:j.palczewski@leeds.ac.uk}{j.palczewski@leeds.ac.uk}}
\numberwithin{equation}{section}
\begin{document}

\begin{abstract}
We study zero-sum stochastic games between a singular controller and a stopper when the (state-dependent) diffusion matrix of the underlying controlled diffusion process is degenerate.  In particular, we show the existence of a value for the game and determine an optimal strategy for the stopper. The degeneracy of the dynamics prevents the use of analytical methods based on solution in Sobolev spaces of suitable variational problems. Therefore we adopt a probabilistic approach based on a perturbation of the underlying diffusion modulated by a parameter $\gamma>0$. For each $\gamma>0$ the approximating game is non-degenerate and admits a value $u^\gamma$ and an optimal strategy $\tau^\gamma_*$ for the stopper. Letting $\gamma\to 0$ we prove convergence of $u^\gamma$ to a function $v$, which identifies the value of the original game. We also construct explicitly optimal stopping times $\theta^\gamma_*$ for $u^\gamma$, related but not equal to $\tau^\gamma_*$, which converge almost surely to an optimal stopping time $\theta_*$ for the game with degenerate dynamics. 
\end{abstract}

\maketitle

\section{Introduction}

We consider stochastic zero-sum games between a singular controller and a stopper in a {\em degenerate} diffusive set-up. The underlying controlled dynamics is described by a stochastic differential equation (SDE) in $\R^d$ which is linearly affected by a singular control (i.e., controls with paths that are singular with respect to the Lebesgue measure). Differently from existing results, the state-dependent diffusion coefficient matrix $\sigma(x)$ of the SDE is not assumed to be {\em uniformly elliptic}. That means that for any $x\in\R^d$ there may be $\zeta\in\R^d$, $\zeta\neq 0$, such that $\langle \sigma\sigma^\top (x)\zeta,\zeta\rangle_d=0$, where $\sigma^\top$ is the transpose of $\sigma$ and $\langle\cdot,\cdot\rangle_d$ is the scalar product in $\R^d$. 

In \cite{bovo2022variational}, we started a systematic study of zero-sum singular-controller vs.\ stopper games in a diffusive set-up. Our approach in \cite{bovo2022variational} is based on a mixture of probabilistic methods and partial differential equations (PDE), which crucially relies on the assumption of uniform ellipticity of the diffusion coefficient. However, numerous irreversible (partially reversible) investment models for a single agent require more flexibility. For example, works by Zervos et al. \cite{lokka2011model,lokka2013long,merhi2007model}, Guo and Tomecek \cite{guo2009class}, Federico et al. \cite{federico2021singular,federico2014characterization}, Ferrari \cite{ferrari2015integral}, De Angelis et al.  \cite{de2017optimal,de2015nonconvex,deangelis2019solvable} consider a controlled process $X$ which is fully degenerate in the direction of the controlled coordinate (i.e., there is no diffusion in the control direction). It is therefore natural to consider similar set-ups in the context of stochastic games.

Removing the assumption of uniform ellipticity from the specification of the diffusion matrix makes an application of the PDE methods from \cite{bovo2022variational} no longer viable. The value function of the game in \cite{bovo2022variational} is obtained as a solution of a suitable (nonlinear) variational problem. The latter is solved by approximation via a family of penalised (semilinear) PDEs and employing compactness arguments in Sobolev spaces. Several bounds in Sobolev norms for the solutions of the penalised problems are required to guarantee compactness. Without uniform ellipticity such bounds can no longer be guaranteed (see, e.g., the proofs of \cite[Prop.\ 4.9, Prop.\ 5.1, Lem.\ 5.8]{bovo2022variational}).

In this paper, we obtain that the value of the game, $v:[0,T]\times\R^d\to \R$, exists and we find an optimal stopping time $\theta_*$ for the stopper. The latter is of a form slightly different to the one obtained in \cite{bovo2022variational} and commonly encountered in optimal stopping problems and stopping games (see the discussion following Theorem \ref{thm:usolvar} below): it is in the form of a hitting time for the pair given by the controlled process and its left limit. 

The methodology of our paper is based on an approximation of the game with games in which the controlled dynamics is non-degenerate. The perturbation to the original (degenerate) dynamics is modulated by a parameter $\gamma>0$ which vanishes in the limit of the approximation procedure. For each $\gamma$, we use results from \cite{bovo2022variational} to guarantee the existence of a value $u^\gamma:[0,T]\times\R^d\to \R$ for the associated game with non-degenerate dynamics $X^\gamma$ and of an optimal stopping time $\tau_*^\gamma$. Letting $\gamma\to 0$ we obtain the convergence $u^\gamma\to v$ uniformly in compact subsets of $[0,T]\times\R^d$. We also obtain convergence of explicitly constructed optimal stopping times $\theta^\gamma_*$ for $u^\gamma$, related but not equal to $\tau_*^\gamma$,
to the optimal stopping time $\theta_*$ in the original game.
We do not obtain a characterisation of $v$ in terms of a variational inequality because of the degeneracy of the diffusion coefficient. However, our approach enables us to allow a broader class of payoff functions than the ones considered in \cite{bovo2022variational}. The expected payoff of the game depends on functions $f,g,h$, that represent the cost of exerting control, the terminal payoff and the running payoff, respectively. In \cite{bovo2022variational}, these functions must be continuously differentiable in time and space, with H\"older-continuous derivatives on $[0,T]\times\R^d$ (this is required for the PDE methods to work). Here instead we only impose that $f,g,h$ be continuous and satisfy mild growth conditions in the spatial variable. Existence of the value $v$ and of an optimal stopping time $\theta_*$ under such weaker regularity conditions on $f,g,h$ can be obtained thanks to another approximation procedure, nested in the one required to deal with the degenerate dynamics.

The overall philosophy of this paper is close in spirit to the one in Bovo et al. \cite{bovo2023degcontrols}. In that paper we consider zero-sum stochastic games between a singular controller and a stopper, under a constraint on the directions of the controls (i.e., the controller can only control $d_0<d$ of the $d$ coordinates of the process). Differently from our setup, the approximation in \cite{bovo2023degcontrols} concerns the space of admissible controls while the diffusion coefficient matrix is uniformly elliptic. It is also worth noticing that in \cite[Ass.\ 2.1\roundbrackets{i}]{bovo2023degcontrols} additional restrictions are required on the structure of the diffusion coefficient matrix. Similar conditions are considered in our Assumption \ref{ass:gen1}(i.a), but we then show that they can be dropped when $f,g,h$ are sufficiently smooth (see Corollary \ref{cor:final}).  

The literature on stochastic games between singular controllers and stoppers is still in its early stages. For zero-sum games the field was initiated by Hernandez-Hernandez et al. \cite{hernandez2015zero}, \cite{hernandez2015zsgsingular} for problems with infinite-time horizon and one-dimensional controlled dynamics. In those papers, it is possible to construct explicit solutions in particular examples using an educated guess on the structure of the optimal controls of the two players. The method is enabled by the one-dimensional state-space, which leads to the study of ordinary differential equations (ODEs), rather than PDEs, and it does not extend to higher dimensional settings. Similar methods based on {\em guess-and-verify} approach have also been used by Ekstr\"om et al.\ \cite{ekstrom2023finetti} in a nonzero-sum singular-controller vs.\ stopper game with asymmetric information. We initiated the study of singular-controller vs.\ stopper zero-sum games in general diffusive setup in \cite{bovo2022variational} and then considered the problem with constrained control directions in \cite{bovo2023degcontrols}. The present paper continues that strand of the literature with the analysis of a degenerate diffusive setting. A broader literature review on stochastic games with {\em classical controls} and stopping times is provided in the introductions of \cite{bovo2023degcontrols} and \cite{bovo2022variational}. 

The paper is organised as follows. At the end of this introduction we provide some basic notation adopted throughout the paper. In Section \ref{sec:setting_3} we introduce the game in the degenerate diffusive set-up. We then state the main assumptions and the main results of the paper (Theorem \ref{thm:usolvar}). In Section \ref{sec:perturbed-dynamics} we introduce the approximating games with non-degenerate dynamics and smooth payoff functions. In Section \ref{sec:convergence} we prove convergence of the approximating games to the original one and, in particular, we show existence of the value and of an optimal stopping strategy for the original game. In Section \ref{sec:ext} we present some refinements and extensions of Theorem \ref{thm:usolvar} under different sets of assumptions.

\subsection{Notation}\label{sec:pre}
In what follows, $d,d'\in\N$ and $T\in(0,\infty)$. The Euclidean norm in $\R^d$ is denoted by $|\cdot|_d$ and the scalar product by $\langle \cdot, \cdot \rangle$. Given a matrix $M\in\R^{d\times d'}$ with entries $M_{ij}$, $i=1,\ldots d$, $j=1,\ldots d'$, its norm is given by
\begin{align*}
|M|_{d\times d'}\coloneqq \Big(\sum_{i=1}^d\sum_{j=1}^{d'}M_{ij}^2\Big)^{1/2}.
\end{align*}
For a square matrix $M \in \R^{d \times d}$, we write $\mathrm{tr}(M)\coloneqq \sum_{i=1}^d M_{ii}$ for its trace. Notice that
$|M|_{d\times d'}  = \big(\mathrm{tr}(M M^\top)\big)^{1/2}$, where $M^\top$ is the transpose of $M$.

The state space in our problem will be
\[
\R^{d+1}_{0,T}:=[0,T]\times\R^d.
\]
For a smooth function $f:\R^{d+1}_{0,T}\to \R$, we denote its partial derivatives by $\partial_t f$, $\partial_{x_i}f$, $\partial_{x_ix_j}f$,  $i,j=1,\ldots d$. The spatial gradient is defined as $\nabla f=(\partial_{x_1}f,\ldots \partial_{x_d} f)$, and $D^2 f$ denotes the spatial Hessian matrix with entries $\partial_{x_i x_j}f$ for $i,j=1,\ldots d$.

For an open set $D\subset \R^{d+1}_{0,T}$, let $C^\infty_{c, {\rm sp}}(D)$ be the space of real-valued functions on $D$ \emph{with compact support in the spatial coordinates} (not in time) and infinitely many continuous derivatives. 
For $p\in[1,\infty)$, $W^{1,2,p}_{\ell oc}(\R^{d+1}_{0,T})$ is the Sobolev space $\big\{ f\in L^p_{\ell oc}(\R^{d+1}_{0,T}) :\, f\in W^{1,2,p}(\cO), \;\forall\, \cO\subseteq \R^{d+1}_{0,T}, \cO\text{ open, bounded}\big\}$ (see \cite[Sec.\ 2.2]{krylov2008lectures}).

\section{Setting and Main Results}\label{sec:setting_3}
Our model has a finite horizon $T\in(0,\infty)$. Let $(\Omega,\cF,\P)$ be a complete probability space, $\F = (\cF_s)_{s\in[0,T]}$ a right-continuous filtration completed by the $\P$-null sets and $(W_s)_{s\in[0,T]}$ a $\F$-adapted, $d'$-dimensional Brownian motion.  There are two players engaged in a game. Player 1 (\emph{the stopper}) chooses a stopping time with respect to the filtration $\F$ at which the game is terminated. Player 2 (\emph{the controller}) chooses a singular control pair $(n,\nu)$, where $(n_t)_{t\in[0,T]}$ is $\F$-progressively measurable, $\R^d$-valued, such that $|n_t|_d=1$ for all $t\in[0,T]$, $\P$-a.s., and $(\nu_t)_{t\in[0,T]}$ is real-valued, non-decreasing, c\`adl\`ag with $\nu_{0-}=0$,\ $\P$-a.s. Such control pair modulates the dynamics of the underlying $d$-dimensional diffusion $(X_t^{[n,\nu]})_{t \in [0, T]}$ given by
\begin{align}\label{eq:SDE}
X_t^{[n,\nu]}= X^{[n,\nu]}_{0-} + \int_0^t b\big(X_s^{[n,\nu]}\big)\ud s + \int_0^t \sigma\big(X_s^{[n,\nu]}\big)\ud W_s + \int_{[0, t]} n_s\ud \nu_s,
\end{align}
where $b:\R^d\to\R^d$ and $\sigma:\R^{d}\to \R^{d}\times \R^{d'}$ are Lipschitz continuous functions. Notice that for $\P$-a.e.\ $\omega$, the map $s\mapsto n_s(\omega)$ is Borel-measurable on $[0,T]$ and $s\mapsto \nu_s(\omega)$ defines a measure on $[0,T]$; thus the Lebesgue-Stieltjes integral $\int_{[0,t]}n_u(\omega)\ud \nu_u(\omega)$ is well-defined for all $t \in [0, T]$ for $\P$-a.e.\ $\omega$. The value $X_{0-}^{[n, \nu]}$ denotes the initial state of the process before a possible shift via controls, i.e., $X^{[n,\nu]}_0 = X^{[n,\nu]}_{0-} + n_0 \Delta \nu_0$. 
We do not make any assumptions about the relationship between the dimension $d$ of the state process and the dimension $d'$ of the driving noise. This points to a distinguishing feature of our framework in which the uncontrolled dynamics of the state process can be \emph{degenerate} in the sense clarified below. 

We denote by $X^{[e_1,0]}$ the uncontrolled process, where $e_1$ is the unit vector in $\R^{d}$ with $1$ in the first entry. Its infinitesimal generator reads
\begin{align}\label{eq:cL}
(\cL\varphi)(x)=\frac{1}{2}\mathrm{tr}\left(a(x)D^2\varphi(x)\right)+\langle b(x),\nabla \varphi(x)\rangle,
\end{align}
with $a(x)\coloneqq(\sigma\sigma^\top)(x)$. The operator $\cL$ is {\em degenerate} at a point $x$ if there exist $\zeta\in\R^d$ ($\zeta\neq 0$) such that $\langle\zeta,a(x)\zeta\rangle=0$. 

We formally introduce the class of {\em admissible controls} for Player 2 as 
\begin{equation*}
 \cA_t\coloneqq \left\{(n,\nu)\left|\begin{aligned}&\text{$(n_s)_{s\in[0,T-t]}$ is progressively measurable, $\R^{d}$-valued,} \\ 
 &\text{with $|n_s|_{d}=1$, $\forall s\in[0,T-t]$, $\P$-a.s.};\\
 &\text{$(\nu_s)_{s\in[0,T-t]}$ is $\mathbb{F}$-adapted, real valued, non-decreasing and}\\
 &\,\text{right-continuous with $\nu_{0-}=0$, $\P\text{-a.s.}$, and $\E[|\nu_{T-t}|^2]<\infty$}
 \end{aligned} \right. \right\}, \qquad t \in [0, T].
\end{equation*}
Here, $t$ has the meaning of the time at which the game starts but, since the uncontrolled diffusion is time-homogeneous, it is convenient to consider a game that starts at time $0$  with time-horizon equal to $T-t$. Admissible controls for Player 1 are $\F$-stopping times from the class
\begin{align*}
\cT_t\coloneqq \left\{\tau:\, \tau\text{ is $\F$-stopping time},\ \tau\in[0, T-t]\right\}, \qquad t\in[0,T].
\end{align*}

Under our assumptions on the coefficients $b, \sigma$, for any admissible $(n, \nu) \in \cA_t$, there is a unique $\F$-adapted solution to \eqref{eq:SDE} on $[0, T-t]$, see, e.g., \cite[Thm.\ 2.5.7]{krylov1980controlled}. We indicate the initial point of $(X^{[n ,\nu]}_s)_{s \in [0, T-t]}$ by a subscript in the probability measure and in the expectation: 
\[
\P_x\big(\,\cdot\,\big)=\P\big(\,\cdot\,\big|X^{[n,\nu]}_{0-}=x\big)\quad\text{and}\quad\E_x\big[\,\cdot\,\big]=\E\big[\,\cdot\,\big|X^{[n,\nu]}_{0-}=x\big].
\]
This is only a notation as the probability space does not change. Notice also that the process $X^{[n,\nu]}$ need not be Markovian for arbitrary $(n,\nu)$.

We study a class of 2-player zero-sum games (ZSGs) between a stopper and a (singular) controller. The stopper is a maximiser and she picks $\tau\in\cT_t$. The controller is a minimiser and she chooses a pair $(n,\nu)\in\cA_t$. Given continuous functions $g,h:\R^{d+1}_{0,T}\to [0,\infty)$ and $f:[0,T]\to(0,\infty)$, a fixed discount rate $r\ge 0$ and $(t,x)\in\RdT$, the game's {\em expected} payoff reads
\begin{align}\label{eq:payoff}
\cJ_{t,x}(n,\nu,\tau)= \E_{x}\Big[e^{-r\tau}g(t\!+\!\tau,X_\tau^{[n,\nu]})\!+\!\int_0^{\tau}\! e^{-rs}h(t\!+\!s,X_s^{[n,\nu]})\,\ud s\!+\!\int_{[0,\tau]}\! e^{-rs}f(t\!+\!s)\,\ud \nu_s \Big].
\end{align}

For $(t,x) \in \RdT$, we define the \emph{lower} and \emph{upper} value of the game, respectively, by
\begin{align}\label{eq:lowuppvfnc_2}
\underline{v}(t,x)\coloneqq \adjustlimits\sup_{\tau\in \mathcal{T}_t}\inf_{(n,\nu)\in \mathcal{A}_t} \cJ_{t,x}(n,\nu,\tau)\quad\text{and}\quad\overline{v}(t,x)\coloneqq \adjustlimits\inf_{(n,\nu)\in \mathcal{A}_t}\sup_{\tau\in \mathcal{T}_t} \cJ_{t,x}(n,\nu,\tau).
\end{align}
Then $\underline{v}(t,x)\leq \overline{v}(t,x)$ and if the equality holds we say that the game \emph{admits a value}: 
\begin{align}\label{eq:valuegame}
v(t,x)\coloneqq \underline{v}(t,x)=\overline{v}(t,x).
\end{align}

The study of the above game and the variational characterisation of the value $v$ are hampered by the possible degeneracy of $\cL$. The PDE arguments from \cite{bovo2022variational} rest on the assumption of non-degeneracy of the underlying state process. 
Indeed, one cannot expect the value to be a strong solution (i.e., in the Sobolev class $W^{1,2,p}_{\ell oc}$) to the variational problem in \cite{bovo2022variational} associated to the game above. Specific technical difficulties arise, in particular, in obtaining $L^p$-bounds for the Hessian matrix of $v$. Nevertheless, in this paper we recover the existence of a value and the characterisation of an optimal strategy for the stopper under quite general assumptions allowing for the degeneracy of the dynamics of the state process.

We divide our assumptions into two groups. The first one concerns the dynamics of the state process and the second one the payoff functional $\cJ_{t,x}$.

\begin{assumption}[Controlled SDE]\label{ass:gen1}
The functions $b$ and $\sigma$ are continuously differentiable on $\R^d$ and Lipschitz with constant $D_1$, i.e.,
\begin{align}\label{eq:lipscondSDE}
|b(x)-b(y)|_d+|\sigma(x)-\sigma(y)|_{d\times d'}\leq D_1|x-y|_d, \qquad\text{for all $x,y\in\R^d$.}
\end{align}
At least one of the following two conditions holds:
\begin{itemize}
	\item[(i.a)] $\sigma_{ij}(x) = \sigma_{ij}(x_i)$ for $i=1, \ldots d$, $j=1, \ldots d'$;
	\item[(i.b)] There exists $D_2>0$ such that
\begin{align}\label{eq:sqrtgrowth}
|\sigma(x)|_{d\times d'}\leq D_2(1+|x|_d)^{\frac{1}{2}},\qquad\text{for all $x\in\R^d$.}
\end{align}
\end{itemize}
\end{assumption}
Notice that \eqref{eq:lipscondSDE} implies that there exists $D_3$ such that 
\begin{align}\label{eq:lingrowcff2}
|b(x)|_d+|\sigma(x)|_{d\times d'}\leq D_3(1+|x|_d), \quad\text{for all $x\in\R^d$}.
\end{align}

Conditions (i.a) or (i.b) enable a delicate argument in Lemma \ref{lem:stabL1res}, which establishes an $L^1$-bound on the controlled process $X^{[n, \nu]}$, uniformly in a sufficiently rich class of admissible controls. This class of controls consists of those $(n, \nu) \in \cA_t$ for which the first moment of $\nu_{T-t}$ is bounded linearly in $x$ (cf.\ Lemma \ref{lem:nubnd}). A classical $L^2$-bound of $X^{[n, \nu]}$ would involve the second moment of $\nu$, which we cannot control. 

\begin{assumption}[Functions $f,g,h$]\label{ass:gen2}
The functions $f:[0,T]\to(0,\infty)$, $g,h:\R^{d+1}_{0,T}\to[0,\infty)$ are continuous on their respective domains. Moreover, the following hold:
\begin{itemize}
	\item[  (i)] The function $f$ is non-increasing;
	\item[ (ii)] There exist constants $K_1\in(0,\infty)$ and $\beta\in[0,1)$ such that	
	\begin{align}\label{eq:ghbeta}
	0\leq g(t,x)+h(t,x)\leq K_1(1+|x|_d^\beta),\quad x\in\R^{d+1}_{0,T};
	\end{align}
	\item[(iii)] The function $g$ is Lipschitz in the spatial coordinates with a constant bounded by $f$. That is, $|\nabla g(t,x)|_d\leq f(t)$ for a.e. $(t,x)\in\R^{d+1}_{0,T}$.
\end{itemize}
\end{assumption}

The next theorem is the main result of the paper. It establishes the existence of the value of the game and its properties, alongside optimality of a stopping time for Player 1.
\begin{theorem}\label{thm:usolvar}
Under Assumptions \ref{ass:gen1} and \ref{ass:gen2}, the game described above admits a value $v$ (i.e., \eqref{eq:valuegame} holds) with the following properties:
\begin{itemize}
	\item[(i)] $v$ is continuous on $\R^{d+1}_{0,T}$,
	\item[(ii)] $|v(t,x)|\leq c(1+|x|_d^\beta)$ for $\beta\in(0,1)$ from Assumption \ref{ass:gen2}(ii) and some $c>0$,\smallskip
	\item[(iii)] $v$ is Lipschitz continuous in space with constant bounded by $f$, i.e., $|\nabla v(t,x)|_d\leq f(t)$ for a.e. $(t,x)\in\R^{d+1}_{0,T}$.
\end{itemize}

For any given $(t,x)\in\R^{d+1}_{0,T}$ and any admissible control $(n,\nu)\in\cA_t$, let $\theta_*=\theta_*(t,x;n,\nu)\in\cT_t$ be defined as $\theta_*\coloneqq\tau_*\wedge\sigma_*$, where $\P_x$-a.s.\footnote{{Notice that $\theta_*\in[0,T-t]$, $\P$-a.s. since $\underline{v}(T, x) = \overline{v} (T,x) = g(T,x)$ due to Assumption \ref{ass:gen2}(iii).}}
\begin{align}\label{eq:taustar}
\begin{split}
&\tau_*\coloneqq \inf\big\{s\geq 0:\, v(t+s,X_s^{[n,\nu]})=g(t+s,X_s^{[n,\nu]})\big\},\\
&\sigma_*\coloneqq \inf\big\{s\geq 0:\, v(t+s,X_{s-}^{[n,\nu]})=g(t+s,X_{s-}^{[n,\nu]})\big\}.
\end{split}
\end{align}
Then, $\theta_*$ is optimal for the stopper in the sense that
\begin{align*}
v(t,x)=\inf_{(n,\nu)\in\cA_t}\cJ_{t,x}\big(n,\nu,\theta_*(t, x; n,\nu)\big),\quad (t,x)\in\RdT.
\end{align*}
\end{theorem}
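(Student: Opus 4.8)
The plan is to obtain the value $v$ and the optimal stopping time $\theta_*$ as limits of the corresponding objects for the non-degenerate, smooth-payoff approximating games constructed in Section \ref{sec:perturbed-dynamics}. Concretely, I would introduce a doubly-indexed family of games: an outer parameter $\gamma>0$ regularising the diffusion matrix (e.g.\ replacing $\sigma$ by $\sigma^\gamma$ with $\sigma^\gamma(\sigma^\gamma)^\top = a + \gamma I$, or adding an independent $\gamma$-scaled Brownian motion in the controlled directions), and a nested parameter $\delta>0$ mollifying $f,g,h$ to obtain smooth payoffs satisfying the regularity hypotheses of \cite{bovo2022variational}, while preserving Assumption \ref{ass:gen2} (monotonicity of $f$, the growth bound \eqref{eq:ghbeta}, and the gradient constraint $|\nabla g|\le f$ — the last one requires a careful choice of mollifier so it is not destroyed). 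For each $(\gamma,\delta)$, \cite{bovo2022variational} yields a value $u^{\gamma,\delta}\in W^{1,2,p}_{\ell oc}$, solving the nonlinear variational problem, together with an optimal stopping rule. I would then send $\delta\to 0$ first to get $u^\gamma$ with the stated structural bounds (continuity, growth, $|\nabla u^\gamma|\le f$), then $\gamma\to 0$.

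The core analytic input is compactness. The uniform growth bound \eqref{eq:ghbeta} with $\beta<1$ plus Assumption \ref{ass:gen2}(iii) (via the $L^1$-bound on $X^{[n,\nu]}$ from Lemma \ref{lem:stabL1res} and the linear-in-$x$ bound on $\E[\nu_{T-t}]$ from Lemmas \ref{lem:nubnd}, \ref{lem:nubnd}) should give a bound $|u^{\gamma,\delta}(t,x)|\le c(1+|x|_d^\beta)$ uniform in $\gamma,\delta$, which transfers to $v$ and yields property (ii). The spatial gradient constraint $|\nabla u^{\gamma,\delta}|_d\le f$ is inherited from the analogous property in \cite{bovo2022variational} (which holds precisely because $|\nabla g|\le f$ and $f$ is non-increasing), and it provides equi-Lipschitz-continuity in space uniformly in the parameters; this is what replaces the Sobolev compactness arguments that fail in the degenerate case. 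Time-regularity (equicontinuity in $t$) I would extract from a dynamic-programming/Dynkin estimate together with the spatial Lipschitz bound, controlling $|u^{\gamma,\delta}(t,x)-u^{\gamma,\delta}(t',x)|$ by the displacement of the process over $[t,t']$. Arzelà--Ascoli then gives a subsequential uniform-on-compacts limit $v$; one shows $\underline v \le v \le \overline v$ by passing to the limit in the inequalities $\underline v^{\gamma,\delta}\le u^{\gamma,\delta}\le \overline v^{\gamma,\delta}$ and using convergence of the approximating games' values to those of the original game (a separate stability estimate on the payoff $\cJ_{t,x}$ under perturbation of dynamics and of $f,g,h$, again leaning on the uniform $L^1$ control of $X^{[n,\nu]}$ and $\nu$). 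This forces $\underline v = \overline v = v$.

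For optimality of $\theta_*$, the direction $v(t,x)\le \inf_{(n,\nu)}\cJ_{t,x}(n,\nu,\theta_*)$ is immediate from $v\le\overline v$ once one knows the game has a value, so the content is the reverse inequality: for every $(n,\nu)\in\cA_t$, $\cJ_{t,x}(n,\nu,\theta_*)\le v(t,x)$. The idea is to use the explicitly constructed optimal stopping times $\theta^\gamma_*$ for $u^\gamma$ (constructed in Section \ref{sec:ext} or \ref{sec:convergence} as hitting times of the contact set $\{u^\gamma=g\}$, suitably defined so that optimality for $u^\gamma$ holds via a verification argument on the variational inequality), establish $\theta^\gamma_*\to\theta_*$ $\P_x$-a.s.\ — here the two-sided definition $\theta_*=\tau_*\wedge\sigma_*$ involving both $X^{[n,\nu]}$ and its left limit $X^{[n,\nu]}_{s-}$ is exactly what makes the hitting times robust under the a.s.\ convergence $X^{\gamma,[n,\nu]}\to X^{[n,\nu]}$, since a one-sided (open-set) hitting time would not be upper semicontinuous in the path — and then pass to the limit in $\cJ_{t,x}^{\gamma}(n,\nu,\theta^\gamma_*)\le u^\gamma(t,x)$. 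The convergence of the integral functionals requires uniform integrability, which comes again from \eqref{eq:ghbeta}, the uniform moment bounds on $X^\gamma$ and $\nu$, and dominated convergence; the jump term $\int e^{-rs}f(t+s)\ud\nu_s$ needs weak-convergence care at atoms of $\nu$, but $f$ continuous and deterministic makes this manageable.

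The main obstacle I anticipate is twofold: (a) the a.s.\ convergence $\theta^\gamma_*\to\theta_*$ — one must show no mass is ``lost'' in the limit, i.e.\ that hitting times of the shrinking/varying contact sets $\{u^\gamma = g\}$ do not jump, which typically needs a careful argument distinguishing $\tau_*$ (hitting via the right-continuous path, handled by semicontinuity from below of hitting times of open sets applied to the complement) from $\sigma_*$ (hitting via left limits, where one exploits that $s\mapsto (v-g)(t+s,X_{s-}^{[n,\nu]})$ is left-continuous and the approximations converge), and possibly a regularity/no-degenerate-contact property of the limit; and (b) verifying that the mollification in the nested $\delta$-approximation can be done while simultaneously keeping $|\nabla g^\delta|\le f^\delta$, $f^\delta$ non-increasing, and the growth bound — a standard but fiddly construction. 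Everything else is assembling uniform estimates and passing to limits.
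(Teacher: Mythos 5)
Your overall architecture---regularise the diffusion with a $\gamma$-perturbation, mollify/truncate the payoffs, use the gradient constraint $|\nabla u^\gamma|_d\le f$ as the substitute for Sobolev compactness, and prove optimality of $\theta_*$ by passing to the limit in a verification inequality along converging stopping times---is the paper's. Two points, however, are genuine gaps.

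First, the order of limits. You propose, for each fixed $\gamma$, to first remove the payoff mollification ($\delta\to0$) and only then send $\gamma\to0$. But the key stability estimate for the $\gamma\to0$ step (Theorem \ref{thm:convugam}) bounds $|\cJ^\gamma_{t,x}-\cJ_{t,x}|$ by $K(1+T)\,\E_x\big[\sup_s|X^{[n,\nu],\gamma}_s-X^{[n,\nu]}_s|_d\big]$ using the global Lipschitz continuity \eqref{eq:ghsmooth} of $g,h$; under Assumption \ref{ass:gen2} the payoffs are merely continuous with sublinear growth, so in your ordering the final $\gamma\to0$ limit has no such estimate available and the argument as written fails. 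The paper performs $\gamma\to0$ only for smooth, compactly supported payoffs (Assumption \ref{ass:gen3}), obtaining $v^{j,k}_m$, and then relaxes the payoffs in the \emph{degenerate} game directly, using the strict sublinearity $\beta<1$ in \eqref{eq:ghbeta} together with the uniform $L^1$-bounds of Lemmas \ref{lem:nubnd} and \ref{lem:stabL1res} to control the tails.

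Second, the optimality argument is incomplete at its last step. The verification inequality for $u^\gamma$ produces, after letting $\gamma\to0$, the bound $v(t,x)\le \E_x\big[e^{-r\theta_*}v(t+\theta_*,X^{[n,\nu]}_{\theta_*-})+\int_0^{\theta_*}e^{-rs}h\,\ud s+\int_{[0,\theta_*)}e^{-rs}f\,\ud\nu_s\big]$, i.e.\ $v$ evaluated at the \emph{left limit} of the process, with the control cost integrated over $[0,\theta_*)$ only. Converting this into $\cJ_{t,x}(n,\nu,\theta_*)$---which involves $g(t+\theta_*,X^{[n,\nu]}_{\theta_*})$ and the cost over the closed interval---requires the case split on $\{\sigma_*<\tau_*\}$ versus $\{\sigma_*\ge\tau_*\}$ together with both gradient bounds $|\nabla g|_d\le f$ and $|\nabla v|_d\le f$; this is where the two-sided definition $\theta_*=\tau_*\wedge\sigma_*$ does its real work, not only in making the hitting times stable under path convergence as you suggest. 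Relatedly, full a.s.\ convergence $\theta^\gamma_*\to\theta_*$ is neither needed nor proved: the one-sided bound $\liminf_k\theta^{\gamma_k}_*\ge\theta_*$ along a subsequence, combined with $\theta^{\gamma_k}_*\wedge\theta_*\le\theta_*$ and the left-continuity of $s\mapsto X^{[n,\nu]}_{s-}$ and $s\mapsto\int_{[0,s)}e^{-ru}f(t+u)\,\ud\nu_u$, suffices.
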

The above theorem asserts that the value $v$ is continuous as a function of all its arguments and it is Lipschitz continuous in the spatial variable. The bound by $f$ of the norm of the spatial gradient of $v$ reflects the cost of action for the controller and is natural in singular control problems: a larger norm of $\nabla v$ is prevented by the possibility of the controller to exert an immediate shift at a cost $f$.

We identify an optimal strategy of the stopper, $\theta_*$, in terms of hitting times of the process $(t+s, X^{[n, \nu]}_s)_{s \in [0, T-t]}$ and of its left limits. The form of the strategy is unusual. One would expect the stopper to act according to $\tau_*$ akin to the classical optimal stopping literature and in agreement with \cite[Thm.\ 3.3]{bovo2022variational}. It turns out that $\tau_*$ is insufficient to guarantee optimality and the stopper needs to monitor the left limits of the state process too. We first note that as long as the controller acts continuously, the stopping times $\tau_*$ and $\sigma_*$ coincide. This suggests that under the current assumptions we cannot assert that it is suboptimal for the controller to shift the process exactly at $\sigma_*$ and, as a result, delay $\tau_*$.

The underlying idea of the proof of Theorem \ref{thm:usolvar} is to approximate the upper and lower value of our game by values of games with non-degenerate dynamics and smooth, compactly supported payoff functions. Those games are studied with PDE methods developed in \cite{bovo2022variational}. A sequence of delicate limiting arguments are then employed to remove the random perturbation which brings in the non-degeneracy and, subsequently, to relax assumptions on the payoff functions. Details are presented in Sections \ref{sec:perturbed-dynamics}-\ref{sec:convergence}. In Section \ref{sec:ext} we further show how to relax assumptions on the payoff functions beyond Assumption \ref{ass:gen2} and assumptions on the diffusion coefficient beyond Assumption \ref{ass:gen1}(i.a) and (i.b). In particular, we can allow for more general growth conditions on the payoff functions at the cost of imposing higher smoothness thereof.

Before turning our attention to technical arguments, we remark that with no loss of generality we can restrict Player 2's admissible controls to those with bounded expectation (linearly in $x$). The proof is similar to \cite[Lem.~3.1]{bovo2023degcontrols} and therefore omitted.
\begin{lemma}\label{lem:nubnd}
There is a constant $K_2>0$ such that for any $(t,x)\in\R^{d+1}_{0,T}$ we have
\begin{align*}
&\underline{v}(t,x)=\adjustlimits\inf_{(n,\nu)\in\cA^{opt}_{t,x}}\sup_{\tau\in\cT_t}\cJ_{t,x}(n,\nu,\tau),\\
&\overline{v}(t,x)=\adjustlimits\sup_{\tau\in\cT_t}\inf_{(n,\nu)\in\cA^{opt}_{t,x}}\cJ_{t,x}(n,\nu,\tau),
\end{align*}
where $\cA^{opt}_{t,x}\coloneqq\{(n,\nu)\in\cA_t:\, \E_x[\nu_{T-t}]\leq K_2(1+|x|_d)\}$. The constant $K_2$ depends on $D_3$ in \eqref{eq:lingrowcff2}, $K_1$ in \eqref{eq:ghbeta}, $T$, $d$ and $f(T)$.
\end{lemma}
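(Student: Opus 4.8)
The plan is to combine two a priori estimates: a linear-in-$|x|_d$ upper bound for the values of the game, obtained by letting the controller do nothing, and a lower bound of the payoff by the control cost. For the control $(e_1,0)\in\cA_t$ (no shift of the state), dropping the nonnegative cost term, using $e^{-rs}\le 1$, Assumption~\ref{ass:gen2}(ii) with $\beta<1$, and the standard moment bound $\E_x[\sup_{s\le T}|X^{[e_1,0]}_s|_d^2]\le C(1+|x|_d^2)$ (Gronwall, using the linear growth \eqref{eq:lingrowcff2}), one obtains a constant $C_0=C_0(K_1,D_3,T,d)$ with
\[
\sup_{\tau\in\cT_t}\cJ_{t,x}(e_1,0,\tau)\ \le\ C_0\big(1+|x|_d\big),\qquad (t,x)\in\RdT .
\]
On the other hand, since $g,h\ge 0$, $r\ge 0$, and $f$ is positive and non-increasing, for every $(n,\nu)\in\cA_t$, $\tau\in\cT_t$,
\[
\cJ_{t,x}(n,\nu,\tau)\ \ge\ \E_x\Big[\textstyle\int_{[0,\tau]}e^{-rs}f(t+s)\,\ud\nu_s\Big]\ \ge\ e^{-rT}f(T)\,\E_x[\nu_\tau].
\]
I would then set $K_2:=e^{rT}(C_0+1)/f(T)$, which depends only on $D_3$, $K_1$, $T$, $d$, $f(T)$ (and $r$).

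For the second identity ($\overline v$): split $\cA_t=\cA^{opt}_{t,x}\cup\big(\cA_t\setminus\cA^{opt}_{t,x}\big)$. If $(n,\nu)\notin\cA^{opt}_{t,x}$, then $\E_x[\nu_{T-t}]>K_2(1+|x|_d)$, and choosing $\tau\equiv T-t$ in the cost lower bound gives $\sup_{\tau\in\cT_t}\cJ_{t,x}(n,\nu,\tau)\ge e^{-rT}f(T)\E_x[\nu_{T-t}]>(C_0+1)(1+|x|_d)>C_0(1+|x|_d)$; since $(e_1,0)\in\cA^{opt}_{t,x}$, the right-hand side dominates $\inf_{(n,\nu)\in\cA^{opt}_{t,x}}\sup_\tau\cJ_{t,x}(n,\nu,\tau)$. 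Hence removing $\cA_t\setminus\cA^{opt}_{t,x}$ from the outer infimum does not change it, i.e.\ $\overline v(t,x)=\inf_{(n,\nu)\in\cA^{opt}_{t,x}}\sup_\tau\cJ_{t,x}(n,\nu,\tau)$.

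For the first identity ($\underline v$): the trivial inequality is $\sup_\tau\inf_{\cA^{opt}_{t,x}}\cJ\ge\sup_\tau\inf_{\cA_t}\cJ=\underline v$, so it suffices to reverse it. Fix $\tau\in\cT_t$ and, given $(n,\nu)\in\cA_t$, introduce the \emph{frozen} control $\tilde\nu_s:=\nu_{\tau\wedge s}$. It is admissible ($\F$-adapted, non-decreasing, c\`adl\`ag, $\tilde\nu_{0-}=0$, with the jump $\Delta\nu_\tau$ preserved at $\tau$, and $\E[\tilde\nu_{T-t}^2]\le\E[\nu_{T-t}^2]<\infty$), and $\nu,\tilde\nu$ induce the same Lebesgue--Stieltjes measure on $[0,\tau]$, so by pathwise uniqueness $X^{[n,\tilde\nu]}\equiv X^{[n,\nu]}$ on $[0,\tau]$; consequently $\cJ_{t,x}(n,\tilde\nu,\tau)=\cJ_{t,x}(n,\nu,\tau)$ (the payoff only sees the control and state up to $\tau$), while $\tilde\nu_{T-t}=\nu_\tau$. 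Applying the cost lower bound to $\tilde\nu$ gives $\E_x[\tilde\nu_{T-t}]\le e^{rT}f(T)^{-1}\cJ_{t,x}(n,\nu,\tau)$. Along a minimising sequence $(n^k,\nu^k)$ for $\inf_{\cA_t}\cJ_{t,x}(\cdot,\cdot,\tau)$ — whose infimum is $\le\cJ_{t,x}(e_1,0,\tau)\le C_0(1+|x|_d)$, hence eventually $\cJ_{t,x}(n^k,\nu^k,\tau)\le(C_0+1)(1+|x|_d)$ — the frozen controls $(n^k,\tilde\nu^k)$ satisfy $\E_x[\tilde\nu^k_{T-t}]\le K_2(1+|x|_d)$, i.e.\ they lie in $\cA^{opt}_{t,x}$, with unchanged payoff. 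Thus $\inf_{\cA^{opt}_{t,x}}\cJ_{t,x}(\cdot,\cdot,\tau)=\inf_{\cA_t}\cJ_{t,x}(\cdot,\cdot,\tau)$ for each $\tau$, and taking $\sup_{\tau\in\cT_t}$ gives the first identity.

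Nothing here is deep — the argument mirrors \cite[Lem.~3.1]{bovo2023degcontrols} — and the only points requiring care are the uniformity in $(t,x)$ of $C_0$ and $K_2$ and, in the lower-value case, checking that freezing a control at a stopping time is admissible and alters neither the path of $X^{[n,\nu]}$ on $[0,\tau]$ nor the value $\cJ_{t,x}(\cdot,\cdot,\tau)$.
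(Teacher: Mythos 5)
Your proof is correct and follows essentially the same route as the paper's (which is omitted there and deferred to the companion paper's Lemma~3.1): a linear-in-$|x|_d$ upper bound via the null control $(e_1,0)$, a lower bound on the payoff by the cost of control via $e^{-rT}f(T)$, and --- the one genuinely delicate point, for the lower value --- freezing the control at $\tau$ so that a small payoff forces a small $\E_x[\nu_{T-t}]$ rather than merely a small $\E_x[\nu_\tau]$. The only cosmetic discrepancy is that your $K_2$ carries the factor $e^{rT}$, i.e., a dependence on $r$ not listed in the statement; this does not affect the validity of the argument.
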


Notice that the original class of admissible controls $\cA_t$ does not depend on $x$, whereas $\cA_{t,x}^{opt}$ does.

\section{Approximating games with a perturbed controlled dynamics}\label{sec:perturbed-dynamics}

In this section, we introduce a family of ZSGs with non-degenerate dynamics which will be shown in Section \ref{sec:convergence} to approximate the game in our paper. We will first prove Theorem \ref{thm:usolvar} under stronger assumptions on the payoff functions. These conditions will be relaxed in Section \ref{sec:ass3.2}. Unless stated otherwise, we will now proceed under the following conditions.
\begin{assumption}\label{ass:gen3}
The functions $f:[0,T]\to(0,\infty)$, $g,h:\R^{d+1}_{0,T}\to[0,\infty)$ are such that
\begin{itemize}
\item[(i)] $g, h\in C^{\infty}_{c,{\rm sp}}(\R^{d+1}_{0,T})$;
\item[(ii)] $f\in C^{\infty}([0,T])$ and non-increasing;
\item[(iii)] $|\nabla g(t,x)|_{d}\leq f(t)$ for all $(t,x)\in\R^{d+1}_{0,T}$.
\end{itemize}
\end{assumption}
Since $g,h$ are smooth and compactly supported, there is a constant $K \in (0, \infty)$ such that
\begin{itemize}
\item[(iv)] $f,g$ and $h$ are bounded and, for all $0\leq s<t\leq T$ and  $x,y\in \R^{d}$,
\begin{align}\label{eq:ghsmooth}
|g(t,x)-g(s,y)|+|h(t,x)-h(s,y)|\leq K\big(|x-y|_d+(t-s)\big);
\end{align}
\item[(v)] for all $(t,x)\in\R^{d+1}_{0,T}$
\[
(h+\partial_tg+\cL g-rg)(t,x)\geq -K.
\]
\end{itemize}
Therefore, Assumption \ref{ass:gen1} together with Assumption \ref{ass:gen3} imply \cite[Assumptions 3.1 and 3.2]{bovo2022variational} except for the non-degeneracy of the diffusion coefficient $\sigma$. To address the latter issue, we fix $\gamma\in(0,1)$ and, given $(n,\nu)\in\cA_t$, we consider a \emph{perturbed dynamics} of the state process:
\begin{align}\label{eq:SDEgamma}
\ud X_{s}^{[n,\nu],\gamma}=&\, b(X_s^{[n,\nu],\gamma})\,\ud s +\sigma(X_s^{[n,\nu],\gamma})\ud W_s+\gamma \ud \widetilde{W}_s+ n_{s}\,\ud\nu_s,
\end{align}
where $(\widetilde{W}_s)_{s \ge 0}$ is a $d$-dimensional Brownian motion independent from the Brownian motion $(W_s)_{s \ge 0}$. We denote by $\cJ^\gamma_{t,x}$ the payoff $\cJ_{t,x}$ defined in \eqref{eq:payoff} with $X^{[n,\nu]}$ replaced by $X^{[n,\nu],\gamma}$, i.e.,
\begin{align*}
\cJ_{t,x}^{\gamma}(n,\nu,\tau)= \E_{x}\Big[&\,e^{-r\tau}g(t+\tau,X_\tau^{[n,\nu],\gamma})+\int_0^{\tau}\! e^{-rs}h(t+s,X_s^{[n,\nu],\gamma})\,\ud s+\int_{[0,\tau]}\! e^{-rs}f(t+s)\,\ud \nu_s \Big].
\end{align*}
We say that the game with expected payoff $\cJ^\gamma_{t,x}$ admits a value if
\begin{align}\label{eq:gametheta}
u^\gamma(t,x)=\adjustlimits\sup_{\tau\in\cT_t}\inf_{(n,\nu)\in\cA_t}\cJ_{t,x}^{\gamma}(n,\nu,\tau)= \adjustlimits\inf_{(n,\nu)\in\cA_t}\sup_{\tau\in\cT_t}\cJ_{t,x}^{\gamma}(n,\nu,\tau).
\end{align}

With the addition of the perturbation term $\gamma \ud \widetilde{W}_s$ in the state dynamics, the associated differential operator $\cL^\gamma$ becomes $(\cL^\gamma \varphi)(x)=\frac{1}{2}\mathrm{tr}\left(a_\gamma(x)D^2\varphi(x)\right)+\langle b(x),\nabla \varphi(x)\rangle$, where $a_\gamma(x)=a(x)+\gamma^2\mathrm{I}_d$, with $\mathrm{I}_d$ denoting the $d$-dimensional identity matrix (cf.~\eqref{eq:cL} for the original dynamics). The operator $\cL^\gamma$ is uniformly non-degenerate: 
\begin{align*}
\langle \zeta,a_\gamma(x)\zeta \rangle=\,\langle \zeta,a(x)\zeta\rangle+\gamma^2|\zeta|_d^2 \geq \gamma^2|\zeta|_d^2,
\end{align*}
for all $x,\zeta\in\R^d$. Thus, under Assumptions \ref{ass:gen1} and \ref{ass:gen3}, the game with payoff $\cJ^\gamma_{t,x}$ satisfies all assumptions of \cite[Thm.\ 3.3]{bovo2022variational}. We reproduce below its main assertions.
\begin{theorem}\label{thm:from3to5}
The game with payoff $\cJ^\gamma_{t,x}$ admits a value (i.e., \eqref{eq:gametheta} holds). The value function $u^\gamma$ is the maximal solution in the class $W^{1,2,p}_{\ell oc}(\R^{d+1}_{0,T})$, for arbitrary $p\in[1,\infty)$, of the variational inequality 
\begin{align}\label{eq:varineq}
\begin{split}
&\min\big\{\max\big\{ \partial_tu+\cL^\gamma u-ru+h,g-u\big\},f-|\nabla u|_d\big\}=0,\\
&\max\big\{\min\big\{ \partial_tu+\cL^\gamma u-ru+h,f-|\nabla u|_d\big\},g-u\big\}=0,
\end{split}
\end{align}
with $u(T,x)=g(T,x)$ and growth condition $|u(t,x)|\leq c(1+|x|_d)$, for a suitable $c>0$.

For any given $(t,x)\in\R^{d+1}_{0,T}$ and any admissible control $(n,\nu)\in\cA_t$, the stopping time defined under $\P_x$ as
\begin{align}\label{eq:taustar_2}
\tau_*^\gamma=\tau_*^\gamma(t,x;n,\nu)\coloneqq \inf\big\{s\geq 0:\, u^\gamma(t+s,X_s^{[n,\nu],\gamma})=g(t+s,X_s^{[n,\nu],\gamma})\big\}
\end{align}
is optimal for the stopper.
\end{theorem}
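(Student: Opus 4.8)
The plan is to derive Theorem~\ref{thm:from3to5} as a direct application of \cite[Thm.\ 3.3]{bovo2022variational}, which already establishes precisely these three assertions---existence of a value, the maximal-$W^{1,2,p}_{\ell oc}$-solution characterisation of the value function, and optimality of the hitting time of the contact set---for singular-controller vs.\ stopper games whose state dynamics are \emph{uniformly non-degenerate}. Since that result is proved in \cite{bovo2022variational} under \cite[Assumptions 3.1 and 3.2]{bovo2022variational}, the only work is to check that the perturbed game with payoff $\cJ^\gamma_{t,x}$ and dynamics \eqref{eq:SDEgamma} meets those hypotheses in full, and then to read off the conclusions. As noted in the text preceding the statement, Assumptions~\ref{ass:gen1} and~\ref{ass:gen3} already supply everything required by \cite[Assumptions 3.1 and 3.2]{bovo2022variational} \emph{except} the non-degeneracy of the diffusion coefficient; the perturbation by $\gamma\,\ud\widetilde W_s$ is introduced precisely to repair this single missing ingredient.

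The key verification is to recast \eqref{eq:SDEgamma} as a standard controlled SDE in the format of \cite{bovo2022variational}. First I would note that, since $(W_s)$ and $(\widetilde W_s)$ are independent, the pair $(W_s,\widetilde W_s)$ is a $(d'+d)$-dimensional Brownian motion, and \eqref{eq:SDEgamma} is driven by it through the augmented diffusion matrix $\sigma_\gamma(x)\coloneqq[\,\sigma(x)\mid\gamma\,\mathrm{I}_d\,]\in\R^{d\times(d'+d)}$. One then computes $\sigma_\gamma\sigma_\gamma^\top=\sigma\sigma^\top+\gamma^2\mathrm{I}_d=a_\gamma$, so that the generator of $X^{[n,\nu],\gamma}$ is exactly the operator $\cL^\gamma$ recorded before the theorem and the uniform ellipticity $\langle\zeta,a_\gamma(x)\zeta\rangle\ge\gamma^2|\zeta|_d^2$ holds for all $x,\zeta\in\R^d$. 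The map $\sigma_\gamma$ inherits the $C^1$, Lipschitz and linear-growth properties of $\sigma$ from Assumption~\ref{ass:gen1}, the appended constant block $\gamma\,\mathrm{I}_d$ being harmless; hence \cite[Assumption 3.1]{bovo2022variational} is satisfied with the now-nondegenerate coefficient $\sigma_\gamma$. On the payoff side, Assumption~\ref{ass:gen3} gives $g,h\in C^\infty_{c,{\rm sp}}$, $f\in C^\infty$ non-increasing, and $|\nabla g|_d\le f$; together with the bounds \eqref{eq:ghsmooth} and item~(v) stated after Assumption~\ref{ass:gen3}, these subsume the differentiability, H\"older-regularity, growth and gradient-constraint requirements of \cite[Assumption 3.2]{bovo2022variational}.

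With all hypotheses matched, \cite[Thm.\ 3.3]{bovo2022variational} applies verbatim to the perturbed game and yields the three conclusions: the value \eqref{eq:gametheta} exists; $u^\gamma$ is the maximal $W^{1,2,p}_{\ell oc}(\R^{d+1}_{0,T})$ solution, for every $p\in[1,\infty)$, of the variational inequality \eqref{eq:varineq} with terminal datum $u^\gamma(T,\cdot)=g(T,\cdot)$ and linear growth; and the contact-set hitting time $\tau^\gamma_*$ in \eqref{eq:taustar_2} is optimal for the stopper. The only point requiring care---rather than a genuine obstacle---is the bookkeeping of the augmented noise: one must confirm that the operator $\cL^\gamma$ appearing in \eqref{eq:varineq}, built from $a_\gamma=a+\gamma^2\mathrm{I}_d$, is exactly the infinitesimal generator associated with $\sigma_\gamma$, so that the variational inequality and the stopping time produced by \cite[Thm.\ 3.3]{bovo2022variational} for the dynamics $X^{[n,\nu],\gamma}$ coincide literally with those written in \eqref{eq:varineq} and \eqref{eq:taustar_2}. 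No analysis beyond that already carried out in \cite{bovo2022variational} is needed.
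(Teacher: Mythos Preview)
Your proposal is correct and matches the paper's approach exactly: the paper does not give a standalone proof but simply observes that under Assumptions~\ref{ass:gen1} and~\ref{ass:gen3} the perturbed game satisfies all hypotheses of \cite[Thm.\ 3.3]{bovo2022variational} (the $\gamma$-perturbation supplying the missing uniform ellipticity) and then quotes the conclusions. Your write-up is in fact more explicit than the paper's, since you spell out the augmented diffusion matrix $\sigma_\gamma=[\sigma\mid\gamma\mathrm I_d]$ and verify $\sigma_\gamma\sigma_\gamma^\top=a_\gamma$.
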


Thanks to the boundedness and positivity of $f,g,h$, the value function of the game $u^\gamma$ is bounded. Indeed, it is non-negative and the upper bound follows by taking a sub-optimal control $(n,\nu)=(e_1,0)$ with $e_1=(1,0,\ldots 0) \in \R^d$.

As in Lemma \ref{lem:nubnd}, using the same arguments of proof as in \cite[Lem.~3.1]{bovo2023degcontrols}, we can restrict our attention to a subset of admissible controls which is the same for any $\gamma \in(0, 1)$. The latter property is important when we study the behaviour of the game as $\gamma \to 0$.
\begin{lemma}\label{lem:L1bnd}
For any $(t,x)\in\R^{d+1}_{0,T}$, we have
\begin{align*}
u^\gamma(t,x)=\inf_{(n,\nu)\in\cA^{opt}_{t,x}}\sup_{\tau\in\cT_t}\cJ_{t,x}^\gamma(n,\nu,\tau)=\sup_{\tau\in\cT_t}\inf_{(n,\nu)\in\cA^{opt}_{t,x}}\cJ_{t,x}^\gamma(n,\nu,\tau),
\end{align*}
where $\cA_{t,x}^{opt}\coloneqq\big\{(n,\nu)\in\cA_t:\,\E_x[\nu_{T-t}]\leq K_2(1+|x|_d)\big\}$ and the constant $K_2 > 0$ can be chosen the same here and in Lemma \ref{lem:nubnd}, i.e., $K_2 = K_2(d, D_3, K_1, T, f(T))$ with $D_3$ from \eqref{eq:lingrowcff2} and $K_1$ from \eqref{eq:ghbeta}, and independent from $\gamma$.
\end{lemma}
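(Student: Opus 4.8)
The plan is to repeat the proof of Lemma \ref{lem:nubnd} (i.e.\ that of \cite[Lem.~3.1]{bovo2023degcontrols}) with $X^{[n,\nu]}$ replaced by $X^{[n,\nu],\gamma}$, and then verify that every constant that appears can be chosen uniformly in $\gamma\in(0,1)$. The underlying mechanism is the one common to singular control: a control that pushes the state by a large amount is expensive and hence disadvantageous for the minimiser. First I would establish an a priori moment bound for the uncontrolled perturbed process $X^{[e_1,0],\gamma}$: since the pair of diffusion coefficients $(\sigma,\gamma\,\mathrm{I}_d)$ has linear growth with constant bounded by $D_3+1$ for all $\gamma\in(0,1)$ (using $\gamma^2\leq 1$), the usual Gronwall estimate gives $\E_x\big[\sup_{s\leq T}|X^{[e_1,0],\gamma}_s|_d^2\big]\leq C(1+|x|_d^2)$ with $C=C(D_3,T,d)$ \emph{independent of} $\gamma$. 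Combining this with \eqref{eq:ghbeta}, $\beta<1$ and Jensen's inequality yields a bound $\cJ^\gamma_{t,x}(e_1,0,\tau)\leq C_1(1+|x|_d)$ for all $\tau\in\cT_t$ and all $\gamma\in(0,1)$, with $C_1$ depending only on $K_1,T,d,D_3,r$; since $(e_1,0)$ is a sub-optimal control for the minimiser, this also gives $u^\gamma(t,x)\leq C_1(1+|x|_d)$.

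The second ingredient is the lower bound coming from the cost term: because $f$ is non-increasing and positive and $r\geq 0$, for any $(n,\nu)\in\cA_t$ and $\tau\in\cT_t$ one has, using $g,h\geq 0$,
\[
\cJ^\gamma_{t,x}(n,\nu,\tau)\ \geq\ \E_x\Big[\int_{[0,\tau]} e^{-rs}f(t+s)\,\ud\nu_s\Big]\ \geq\ e^{-rT}f(T)\,\E_x[\nu_\tau].
\]
Set $K_2\coloneqq C_1\,e^{rT}/f(T)$, replacing it if necessary by the larger of this value and the (structurally identical) constant obtained in the proof of Lemma \ref{lem:nubnd}, so that the same $K_2=K_2(d,D_3,K_1,T,f(T))$ serves both statements. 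For the upper value, take $\tau=T-t$: if $(n,\nu)\notin\cA^{opt}_{t,x}$, i.e.\ $\E_x[\nu_{T-t}]>K_2(1+|x|_d)$, then $\sup_{\tau\in\cT_t}\cJ^\gamma_{t,x}(n,\nu,\tau)\geq\cJ^\gamma_{t,x}(n,\nu,T-t)>C_1(1+|x|_d)\geq u^\gamma(t,x)$, so such controls are irrelevant for $\inf_{(n,\nu)\in\cA_t}\sup_\tau\cJ^\gamma_{t,x}$, which gives $u^\gamma(t,x)=\inf_{(n,\nu)\in\cA^{opt}_{t,x}}\sup_{\tau}\cJ^\gamma_{t,x}(n,\nu,\tau)$.

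For the lower value the order of $\inf$ and $\sup$ is reversed, so the argument needs the standard truncation trick. Fix $\tau\in\cT_t$ and $(n,\nu)\in\cA_t$, and set $\nu'_s\coloneqq\nu_{s\wedge\tau}$; then $\nu'\in\cA_t$ and $\cJ^\gamma_{t,x}(n,\nu',\tau)=\cJ^\gamma_{t,x}(n,\nu,\tau)$, since $X^{[n,\nu],\gamma}$ on $[0,\tau]$ and the payoff evaluated at $\tau$ depend on $\nu$ only through $\nu_{\cdot\wedge\tau}$. If $\cJ^\gamma_{t,x}(n,\nu,\tau)\geq C_1(1+|x|_d)$, the control $(e_1,0)\in\cA^{opt}_{t,x}$ already achieves a payoff $\leq C_1(1+|x|_d)\leq\cJ^\gamma_{t,x}(n,\nu,\tau)$; otherwise the displayed bound applied to $\nu'$ gives $\E_x[\nu'_{T-t}]=\E_x[\nu_\tau]\leq e^{rT}\cJ^\gamma_{t,x}(n,\nu',\tau)/f(T)<K_2(1+|x|_d)$, so $(n,\nu')\in\cA^{opt}_{t,x}$. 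In both cases we exhibit a control in $\cA^{opt}_{t,x}$ with payoff no larger than $\cJ^\gamma_{t,x}(n,\nu,\tau)$, whence $\inf_{(n,\nu)\in\cA_t}\cJ^\gamma_{t,x}(n,\nu,\tau)=\inf_{(n,\nu)\in\cA^{opt}_{t,x}}\cJ^\gamma_{t,x}(n,\nu,\tau)$ for each $\tau$; taking $\sup_{\tau\in\cT_t}$ and invoking that the game admits a value (Theorem \ref{thm:from3to5}) finishes the proof. The only point that is genuinely new relative to Lemma \ref{lem:nubnd}, and the one I expect to be the main obstacle, is the $\gamma$-uniformity of the constants: it rests entirely on absorbing the perturbation $\gamma\,\mathrm{I}_d$ into a linear-growth constant $\leq D_3+1$, so that the moment estimate, and therefore $C_1$ and $K_2$, do not deteriorate as $\gamma\to 0$.
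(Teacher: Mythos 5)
Your proof is correct and follows essentially the same route as the paper, which omits the argument and simply points to \cite[Lem.~3.1]{bovo2023degcontrols}: compare with the null control $(e_1,0)$ to get a linear upper bound on $u^\gamma$, bound the payoff from below by $e^{-rT}f(T)\E_x[\nu_\tau]$ to exclude expensive controls from the upper value, and use the truncation $\nu'_{\cdot}=\nu_{\cdot\wedge\tau}$ for the lower value. Your observation that the $\gamma$-uniformity of $K_2$ reduces to absorbing $\gamma\,\mathrm{I}_d$ ($\gamma\in(0,1)$) into the linear-growth constant of the diffusion coefficient is precisely the point the paper is making when it insists that $K_2$ is independent of $\gamma$.
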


Having established the properties of the approximating game with the payoff $\cJ^\gamma_{t,x}$, we turn our attention to the convergence of the functions $u^\gamma$ to the value of the original game (cf.\ \eqref{eq:valuegame}) when we let $\gamma\to 0$, and to the limiting behaviour of the optimal stopping times. It turns out that the limit of the family  $(\tau_*^\gamma)_{\gamma>0}$ as $\gamma\to 0$ may not be optimal for the stopper in the original game, see the discussion following Theorem \ref{thm:usolvar}. Instead, we construct a family of stopping times, $(\theta_*^\gamma)_{\gamma > 0}$, as follows. For $\gamma>0$, we define
\[
\theta_*^\gamma\coloneqq\tau_*^\gamma\wedge\sigma_*^\gamma,
\]
where, $\P_x$-a.s., 
\begin{align*}
\sigma_*^\gamma = \sigma_*^\gamma(t,x; n, \nu) \coloneqq \inf\big\{s\geq 0:\, u^\gamma(t+s,X_{s-}^{[n,\nu],\gamma})=g(t+s,X_{s-}^{[n,\nu],\gamma})\big\}.
\end{align*}
The following lemma demonstrates that the stopping time $\theta_*^\gamma=\theta_*^\gamma(t,x; n,\nu)$ is also optimal for the stopper in the game with value $u^\gamma$.

\begin{lemma}\label{lem:theta*opt}
For any $(t,x)\in\R^{d+1}_{0,T}$,  we have
\begin{align}
u^{\gamma}(t,x)=\inf_{(n,\nu)\in\cA_t}\cJ_{t,x}^\gamma\big(n,\nu,\theta_*^\gamma(t, x; n,\nu)\big),
\end{align}
hence $\theta_*^\gamma$ is optimal for the stopper in the game with value $u^\gamma$.
\end{lemma}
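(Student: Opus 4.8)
\emph{Plan.} Since $u^\gamma$ is the value of the game with payoff $\cJ^\gamma_{t,x}$ (Theorem \ref{thm:from3to5}) and $\theta_*^\gamma\in\cT_t$, the inequality $\inf_{(n,\nu)\in\cA_t}\cJ^\gamma_{t,x}(n,\nu,\theta_*^\gamma)\le u^\gamma(t,x)$ is automatic from the definition of $u^\gamma$ as a $\sup\inf$. So the plan is to prove the reverse bound in the pointwise form $\cJ^\gamma_{t,x}\big(n,\nu,\theta_*^\gamma(t,x;n,\nu)\big)\ge u^\gamma(t,x)$ for every $(n,\nu)\in\cA_t$. The argument follows the scheme used in \cite[Thm.\ 3.3]{bovo2022variational} to show optimality of $\tau_*^\gamma$, the genuinely new point being a possible jump of $\nu$ exactly at $\theta_*^\gamma$. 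Throughout, fix $(n,\nu)\in\cA_t$, write $X^\gamma=X^{[n,\nu],\gamma}$, and note that $\E_x[\nu_{T-t}^2]<\infty$, so by standard SDE moment bounds $\E_x[\sup_{s\le T-t}|X^\gamma_s|_d^2]<\infty$.

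\emph{Step 1: a submartingale.} By Theorem \ref{thm:from3to5}, $u^\gamma\in W^{1,2,p}_{\ell oc}$ solves \eqref{eq:varineq}; in particular $u^\gamma\ge g$, $|\nabla u^\gamma|_d\le f$ a.e., and $\partial_t u^\gamma+\cL^\gamma u^\gamma-ru^\gamma+h\ge 0$ a.e.\ on the open set $\{u^\gamma>g\}$. Since $a_\gamma(\cdot)=a(\cdot)+\gamma^2\mathrm{I}_d$ is uniformly non-degenerate, the It\^o--Krylov formula applies to $e^{-rs}u^\gamma(t+s,\cdot)$ along $X^\gamma$, and the occupation measure of $X^\gamma$ is absolutely continuous, so the a.e.\ relations above transfer along the trajectory for a.e.\ $s$, $\P_x$-a.s. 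Consequently, setting
\[
N_s\coloneqq e^{-rs}u^\gamma(t+s,X^\gamma_s)+\int_0^s e^{-ru}h(t+u,X^\gamma_u)\,\ud u+\int_{[0,s]}e^{-ru}f(t+u)\,\ud\nu_u,
\]
one obtains, up to any bounded stopping time $\rho\le\tau_*^\gamma$, a decomposition $N_{\cdot\wedge\rho}=N_0+A_{\cdot\wedge\rho}+M_{\cdot\wedge\rho}$ with $M$ a true martingale (its integrand, $e^{-ru}\nabla u^\gamma$, is bounded) and $A$ non-decreasing: the $\ud u$-part is $\ge 0$ because $X^\gamma_u\in\{u^\gamma>g\}$ for a.e.\ $u<\tau_*^\gamma$, while the $\ud\nu^c$-part and each jump, equal to $e^{-ru}\big(u^\gamma(t+u,X^\gamma_u)-u^\gamma(t+u,X^\gamma_{u-})+f(t+u)\Delta\nu_u\big)$, are $\ge 0$ since $|\nabla u^\gamma|_d\le f$ and $|X^\gamma_u-X^\gamma_{u-}|_d=\Delta\nu_u$. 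Hence $N$ is a submartingale up to $\tau_*^\gamma$ and optional sampling gives $\E_x[N_\rho]\ge N_{0-}=u^\gamma(t,x)$ for every bounded stopping time $\rho\le\tau_*^\gamma$ (the jump of $N$ at $0$ is also $\ge 0$). This is the mechanism behind optimality of $\tau_*^\gamma$.

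\emph{Step 2: stopping just before the jump at $\theta_*^\gamma$.} Because $\theta_*^\gamma\le\tau_*^\gamma$ may be strictly smaller — precisely when $\sigma_*^\gamma<\tau_*^\gamma$, i.e.\ the controller shifts $X^\gamma$ out of $\{u^\gamma=g\}$ by a jump at $\sigma_*^\gamma$ — one cannot evaluate $N$ at $\theta_*^\gamma$ itself, as there $u^\gamma>g$. Instead, for $m\in\N$ set $\rho_m\coloneqq\inf\{s\ge 0:\,u^\gamma(t+s,X^\gamma_s)\le g(t+s,X^\gamma_s)+1/m\}\wedge\theta_*^\gamma$, a stopping time with $\rho_m\le\theta_*^\gamma\le\tau_*^\gamma$, $\rho_m\uparrow\theta_*^\gamma$ (using $u^\gamma(t+s,X^\gamma_s)>g(t+s,X^\gamma_s)$ for $s<\tau_*^\gamma$ and the c\`adl\`ag paths), and $u^\gamma(t+\rho_m,X^\gamma_{\rho_m})\le g(t+\rho_m,X^\gamma_{\rho_m})+1/m$ whenever $\rho_m<\theta_*^\gamma$. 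By Step 1, $\E_x[N_{\rho_m}]\ge u^\gamma(t,x)$; and $|N_{\rho_m}|\le\|u^\gamma\|_\infty+(T-t)\|h\|_\infty+f(0)\,\nu_{T-t}\in L^1(\P_x)$, so dominated convergence yields $\E_x[L]\ge u^\gamma(t,x)$, where $L\coloneqq\lim_m N_{\rho_m}$. Since $\rho_m$ is non-decreasing, $L$ equals $N_{\theta_*^\gamma-}$ (if $\rho_m<\theta_*^\gamma$ for all $m$) or $N_{\theta_*^\gamma}$ (if $\rho_m=\theta_*^\gamma$ eventually); in the first case passing to the limit in the displayed inequality forces $u^\gamma(t+\theta_*^\gamma,X^\gamma_{\theta_*^\gamma-})=g(t+\theta_*^\gamma,X^\gamma_{\theta_*^\gamma-})$, and in the second case $X^\gamma_{\theta_*^\gamma-}\notin\{u^\gamma=g\}$, which — because $X^\gamma_{\tau_*^\gamma}\in\{u^\gamma=g\}$ and, on $\{\sigma_*^\gamma<\tau_*^\gamma\}$, $X^\gamma_{\sigma_*^\gamma-}\in\{u^\gamma=g\}$ — is possible only if $\theta_*^\gamma=\tau_*^\gamma$ and $X^\gamma_{\theta_*^\gamma}\in\{u^\gamma=g\}$. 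Now, using $\int_{[0,\theta_*^\gamma)}e^{-rs}f\,\ud\nu_s+e^{-r\theta_*^\gamma}f(t+\theta_*^\gamma)\Delta\nu_{\theta_*^\gamma}=\int_{[0,\theta_*^\gamma]}e^{-rs}f\,\ud\nu_s$, one checks in both cases that, on $\{\theta_*^\gamma>0\}$,
\[
L\le e^{-r\theta_*^\gamma}g(t+\theta_*^\gamma,X^\gamma_{\theta_*^\gamma})+\int_0^{\theta_*^\gamma}e^{-rs}h(t+s,X^\gamma_s)\,\ud s+\int_{[0,\theta_*^\gamma]}e^{-rs}f(t+s)\,\ud\nu_s ;
\]
if $L=N_{\theta_*^\gamma}$ this is an equality ($X^\gamma_{\theta_*^\gamma}\in\{u^\gamma=g\}$), and if $L=N_{\theta_*^\gamma-}$ it follows from $u^\gamma(t+\theta_*^\gamma,X^\gamma_{\theta_*^\gamma-})=g(t+\theta_*^\gamma,X^\gamma_{\theta_*^\gamma-})$, the bound $|\nabla g|_d\le f$ (Assumption \ref{ass:gen3}(iii)), and $|X^\gamma_{\theta_*^\gamma}-X^\gamma_{\theta_*^\gamma-}|_d=\Delta\nu_{\theta_*^\gamma}$. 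Taking $\E_x$ over $\{\theta_*^\gamma>0\}\in\cF_0$ (via the $\ind_{\{\theta_*^\gamma>0\}}$-localised version of Step 1) and adding the contribution of $\{\theta_*^\gamma=0\}$ — where one checks directly that $g(t,X^\gamma_0)+f(t)\Delta\nu_0\ge u^\gamma(t,x)$, again from the $f$-Lipschitz bounds and $x\in\{u^\gamma=g\}$ (if $\sigma_*^\gamma=0$) or $X^\gamma_0\in\{u^\gamma=g\}$ (if $\tau_*^\gamma=0$) — we conclude $\cJ^\gamma_{t,x}(n,\nu,\theta_*^\gamma)\ge u^\gamma(t,x)$.

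\emph{Expected main obstacle.} The crux is Step 2: the apparent loss the stopper incurs by stopping while $u^\gamma>g$ (at $\theta_*^\gamma=\sigma_*^\gamma<\tau_*^\gamma$) is \emph{exactly} compensated by the control cost $f(t+\theta_*^\gamma)\Delta\nu_{\theta_*^\gamma}$ that the payoff already carries, and making this precise forces one to stop $N$ an instant before $\theta_*^\gamma$ and to invoke $|\nabla g|_d\le f$; this is precisely why the left limits (through $\sigma_*^\gamma$) must be monitored and why $\theta_*^\gamma$, rather than $\tau_*^\gamma$, is the optimal rule. Subsidiary technical points are the measurability of $\theta_*^\gamma$ (hitting time of the closed set $\{u^\gamma=g\}$ by the c\`adl\`ag process $(t+s,X^\gamma_s)$ and by its c\`agl\`ad left-limit process, with respect to the right-continuous completed filtration), the validity of the It\^o--Krylov calculus and of transferring the a.e.\ variational inequalities along $X^\gamma$ (both hinging on $a_\gamma\ge\gamma^2\mathrm{I}_d$), and the integrability required for optional sampling and for the limit $m\to\infty$ (which come from boundedness of $u^\gamma,h,\nabla u^\gamma$ and from $\E_x[\nu_{T-t}^2]<\infty$).
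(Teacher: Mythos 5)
Your proof is correct and takes essentially the route the paper intends: the paper omits the formal argument for Lemma \ref{lem:theta*opt} (deferring to Lem.\ 3.5 of \cite{bovo2023degcontrols}), but its intuitive justification is precisely your Step 2 mechanism — on $\{\sigma_*^\gamma<\tau_*^\gamma\}$ one has $u^\gamma=g$ at the pre-jump point and the gap $u^\gamma-g$ at the post-jump point is covered by the control cost $f(t+\sigma_*^\gamma)\Delta\nu_{\sigma_*^\gamma}$ via the $f$-Lipschitz bounds on $u^\gamma$ and $g$. The It\^o--Krylov/submartingale verification machinery of your Step 1 is the same one the paper invokes elsewhere (cf.\ the proof of Theorem \ref{thm:opttaustar}), so nothing essential is missing.
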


Notice that the stopping time $\theta_*^\gamma$ is of feedback form, i.e., it depends on the dynamics of the state process and, via this dynamics, on the initial point $(t, x)$ and on the control $(n, \nu)$ applied by the controller.

The proof of Lemma \ref{lem:theta*opt} is similar to the proof of \cite[Lem.\ 3.5]{bovo2023degcontrols} and therefore it is omitted. We only provide an intuitive justification. Note that $\sigma_*^\gamma < \tau_*^\gamma$ only if the controller shifts the state process in a discontinuous way at $\sigma_*^\gamma$ and $u^\gamma(t+\sigma_*^\gamma,X_{\sigma_*^\gamma}^{[n,\nu],\gamma})>g(t+\sigma_*^\gamma,X_{\sigma_*^\gamma}^{[n,\nu],\gamma})$. By the continuity of $u^\gamma$ and $g$, we have $u^\gamma(t+\sigma_*^\gamma,X_{\sigma_*^\gamma-}^{[n,\nu],\gamma})=g(t+\sigma_*^\gamma,X_{\sigma_*^\gamma-}^{[n,\nu],\gamma})$, which implies
\begin{equation*}
\begin{aligned}
u^\gamma(t+\sigma_*^\gamma,X_{\sigma_*^\gamma}^{[n,\nu],\gamma})
- u^\gamma(t+\sigma_*^\gamma,X_{\sigma_*^\gamma-}^{[n,\nu],\gamma}) 
&>
g(t+\sigma_*^\gamma,X_{\sigma_*^\gamma}^{[n,\nu],\gamma})
- g(t+\sigma_*^\gamma,X_{\sigma_*^\gamma-}^{[n,\nu],\gamma})\\
&\ge -f(t + \sigma_*^\gamma) \big|X_{\sigma_*^\gamma}^{[n,\nu],\gamma} - X_{\sigma_*^\gamma-}^{[n,\nu],\gamma} \big|_d,
\end{aligned}
\end{equation*}
where the last inequality follows from Assumption \ref{ass:gen3}(iii). We rewrite it as
\[
u^\gamma(t+\sigma_*^\gamma,X_{\sigma_*^\gamma-}^{[n,\nu],\gamma}) < f(t + \sigma_*^\gamma) \big|X_{\sigma_*^\gamma}^{[n,\nu],\gamma} - X_{\sigma_*^\gamma-}^{[n,\nu],\gamma} \big|_d + u^\gamma(t+\sigma_*^\gamma,X_{\sigma_*^\gamma}^{[n,\nu],\gamma}).
\]
Comparing with the functional $\cJ^\gamma_{t,x}$ and recalling that $u^\gamma$ is the value function, this means that the controller who is a minimiser made a mistake of exerting a jump control at $\sigma_*^\gamma$: the jump increases the value of the game, hence it is against the controller's own interest. It would have been strictly better to control continuously at this time in which case we would have $\sigma_*^\gamma = \tau_*^\gamma$ and the payoff would be equal to $\cJ^\gamma_{t,x}(n, \nu, \sigma_*^\gamma)$. This shows that stopping at $\theta_*^\gamma$ gives at least the value $u^\gamma(t,x)$. However, for a specific choice of $(n, \nu)$ such that $\sigma_*^\gamma < \tau_*^\gamma$ with a positive probability, we have
\[
u^\gamma(t,x) \le \cJ^\gamma_{t,x} (n, \nu, \theta_*^\gamma) < \cJ^\gamma_{t,x} (n, \nu, \tau_*^\gamma).
\]
We can also conclude that in equilibrium (if it exists), the controller never shifts the state process discontinuously at $\sigma^\gamma_*$, in which case $\tau_*^\gamma = \sigma_*^\gamma = \theta_*^\gamma$.

\section{Convergence of the approximating games}\label{sec:convergence}
In this section we first study the limit as $\gamma\to0$ and then we relax the additional regularity conditions of Assumption \ref{ass:gen3}.

\subsection{Properties of the perturbed process}
We start from examination of properties of the perturbed process $X^{[n,\nu],\gamma}$ and its convergence to the unperturbed process $X^{[n,\nu]}$ as $\gamma\to 0$, for any fixed pair $(n,\nu)\in\cA_t$.
\begin{proposition}\label{prop:Lipschitzproc}
Fix $(t,x)\in\R^{d+1}_{0,T}$ and $p\in[1,\infty)$. For any $(n,\nu)\in\cA_t$, we have 
\begin{align}\label{eq:convtheta0l}
\E_x\Big[\sup_{s\in[0,T-t]}\big|X_s^{[n,\nu],\gamma}-X_s^{[n,\nu]}\big|_d^{p}\Big]\leq K_3 \gamma^{p}  ,
\end{align}
where $K_3=K_3(D_1,d,T,p)>0$ with $D_1$ from Assumption \ref{ass:gen1}. 
\end{proposition}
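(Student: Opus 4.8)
The plan is a standard $L^p$-estimate for SDEs with a coupling: realise both processes on the same probability space driven by the same Brownian motion $W$, the same auxiliary $\widetilde W$ (which only appears in $X^{[n,\nu],\gamma}$), and the same control $(n,\nu)$, then estimate the difference pathwise up to a stopping time and close the loop with Gronwall. Write $Y_s \coloneqq X_s^{[n,\nu],\gamma}-X_s^{[n,\nu]}$. Subtracting \eqref{eq:SDE} from \eqref{eq:SDEgamma}, the $n_s\,\ud\nu_s$ terms cancel exactly (the control enters additively and identically), leaving
\[
Y_s = \int_0^s \big(b(X_r^{[n,\nu],\gamma})-b(X_r^{[n,\nu]})\big)\,\ud r + \int_0^s \big(\sigma(X_r^{[n,\nu],\gamma})-\sigma(X_r^{[n,\nu]})\big)\,\ud W_r + \gamma\,\widetilde W_s,
\]
so the singular part disappears and we are left with a genuinely continuous semimartingale whose increments are controlled by $|Y_r|_d$ via the Lipschitz bound \eqref{eq:lipscondSDE}.

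The key steps, in order: (1) For $p\ge 2$, apply the Burkholder–Davis–Gundy inequality to the stochastic integral and Jensen/H\"older to the drift term to obtain, for every $s\in[0,T-t]$,
\[
\E_x\Big[\sup_{u\le s}|Y_u|_d^{p}\Big]\le C_{p}\, \gamma^{p}\,\E_x\big[\,|\widetilde W_s|_d^{p}\,\big] + C_{p}(D_1,T,p)\int_0^s \E_x\Big[\sup_{u\le r}|Y_u|_d^{p}\Big]\ud r,
\]
where the first term is bounded by $K\gamma^p$ with $K=K(d,T,p)$ by Gaussian moment bounds. (2) Gronwall's lemma then yields \eqref{eq:convtheta0l} with a constant $K_3=K_3(D_1,d,T,p)$, for $p\ge 2$. (3) For $p\in[1,2)$ the bound follows from the case $p=2$ by Jensen's inequality on the probability space (or by H\"older), adjusting the constant. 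A minor technical point: to justify the BDG step and the interchange of expectation and $\int_0^s$ one first works with a localising sequence of stopping times $\tau_N=\inf\{s: |Y_s|_d\ge N\}$ to ensure the relevant expectations are finite, then lets $N\to\infty$ by monotone convergence — this is routine because both $X^{[n,\nu]}$ and $X^{[n,\nu],\gamma}$ have square-integrable suprema by the standard SDE estimates under \eqref{eq:lingrowcff2} and $\E[|\nu_{T-t}|^2]<\infty$.

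I do not expect a genuine obstacle here; the only point requiring a little care is confirming that the singular control term cancels cleanly (it does, because it enters both SDEs as the \emph{same} process $\int_{[0,s]}n_r\,\ud\nu_r$, with no coefficient depending on the state), so that the estimate is driven purely by the added $\gamma\,\ud\widetilde W_s$ and the Lipschitz continuity of $b,\sigma$ — with no dependence on $\nu$ in the final constant. This $\nu$-independence is exactly what makes the bound uniform over all of $\cA_t$ and is the feature that will be used when passing to the limit $\gamma\to 0$ in the sequel.
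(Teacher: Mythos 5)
Your proposal is correct and follows essentially the same route as the paper: the same cancellation of the singular control term, the same three-way decomposition of $Y_s$ estimated via Lipschitz continuity of $b,\sigma$, a BDG/Doob-type maximal inequality for the martingale parts, Gronwall for $p\ge 2$, and Jensen to descend to $p\in[1,2)$. The localisation remark is a harmless extra precaution that the paper leaves implicit.
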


\begin{proof}
We start by proving the result for $p\geq2$. Take $(n,\nu)\in\cA_t$ and let $X^{[n,\nu]}$ and $X^{[n,\nu],\gamma}$ be the processes from \eqref{eq:SDE} and \eqref{eq:SDEgamma}, respectively. Let $Y^\gamma_s=X_s^{[n,\nu],\gamma}-X_s^{[n,\nu]}$. We have that for all $s\in[0,T-t]$
\begin{align*}
Y_s^\gamma=\int_0^s\big(b(X^{[n,\nu],\gamma}_r)-b(X^{[n,\nu]}_r)\big)\,\ud r+\gamma \widetilde{W}_s+\int_0^s\big(\sigma(X^{[n,\nu],\gamma}_r)-\sigma(X^{[n,\nu]}_r)\big)\,\ud W_r.
\end{align*}
Using the inequality $\big(\sum_{i=1}^k y_i\big)^p\leq k^{\,p-1}\big(\sum_{i=1}^k|y_i|^p\big)$, taking the supremum first and then the expectation, we get
\begin{align}\label{eq:lipcontheta0_2}
\E_x\Big[\sup_{\lambda\in[0,s]}|Y^\gamma_\lambda|_d^{p}\Big]\leq3^{p-1} \E_x\Big[&\sup_{\lambda\in[0,s]}\Big(\Big|\int_0^\lambda\!\big(b(X_{r}^{[n,\nu],\gamma})-b(X_{r}^{[n,\nu]})\big)\,\ud r\Big|_d^{p}+|\gamma \widetilde{W}_\lambda|_{d}^{p}\\
&+\Big|\int_0^\lambda\!\big(\sigma(X_{r}^{[n,\nu],\gamma})-\sigma(X_{r}^{[n,\nu]})\big)\,\ud W_r\Big|_d^{p}\Big)\Big].\notag
\end{align}

The first term on the right-hand side of \eqref{eq:lipcontheta0_2} is bounded from above using H\"older's inequality and the Lipschitz property of $b$ (see \eqref{eq:lipscondSDE})
\begin{align*}
&\E_x\Big[\sup_{\lambda\in[0,s]}\Big|\int_0^\lambda\!\big(b(X_{r}^{[n,\nu],\gamma})-b(X_{r}^{[n,\nu]})\big)\,\ud r\Big|_d^{p}\Big]
\leq \E_x\Big[ \Big(\int_0^s\!\big|b(X_{r}^{[n,\nu],\gamma})-b(X_{r}^{[n,\nu]})\big|_d\,\ud r \Big)^p \Big] \\
&\leq \E_x\Big[T^{p-1}\!\int_0^s\!\big|b(X_{r}^{[n,\nu],\gamma})-b(X_{r}^{[n,\nu]})\big|_d^{p}\,\ud r\Big]
\leq T^{p-1}D_1^{p}\, \E_x\Big[\int_0^s\!\sup_{\lambda\in[0,r]}\big|Y^\gamma_\lambda\big|_d^{p}\,\ud r\Big].
\end{align*}
The second term of the right-hand side of \eqref{eq:lipcontheta0_2} is bounded from above using the Doob's maximal inequality applied to the submartingale $(|\widetilde{W}_t|_d)_{t \ge 0}$
\begin{align*}
\E_{x}\Big[\sup_{\lambda\in[0,s]}\big|\gamma \widetilde{W}_\lambda\big|_{d}^{p}\Big]
\leq 
\gamma^p \big(\tfrac{p}{p-1}\big)^p \E_x\Big[\big|\widetilde{W}_T\big|_d^{p}\Big]
=: \kappa_1\gamma^{p},
\end{align*}
with $\kappa_1 = \kappa_1 (d, p, T)$. The last term on the right-hand side of \eqref{eq:lipcontheta0_2} is bounded from above using \cite[Cor.\ 2.5.11]{krylov1980controlled} and Lipschitz continuity of $\sigma$
\begin{align*}
\E_x\Big[&\sup_{\lambda\in[0,s]}\Big|\int_0^\lambda\!\big(\sigma(X_{r}^{[n,\nu],\gamma})-\sigma(X_{r}^{[n,\nu]})\big)\,\ud W_r\Big|_d^{p}\Big]\\
&\,\leq \kappa_2\E_x\Big[\int_0^s\!\big|\sigma(X_{r}^{[n,\nu],\gamma})-\sigma(X_{r}^{[n,\nu]})\big|_{d\times d'}^{p}\,\ud r\Big]
\leq \kappa_2 D_1^{p}\, \E_x\Big[\int_0^s\!\sup_{\lambda\in[0,r]}\big|Y^\gamma_\lambda\big|_d^{p}\,\ud r\Big],
\end{align*}
where $\kappa_2=\kappa_2(T,p)=2^{\frac{4+p}{2}}(p-1)^{\frac{p}{2}}T^{\frac{p}{2}-1}$ and $D_1$ comes from \eqref{eq:lipscondSDE}. 

We insert the above three bounds into \eqref{eq:lipcontheta0_2} and change the order of integration:
\[
\E_x\Big[\sup_{\lambda\in[0,s]}\big|Y^\gamma_\lambda\big|_d^{p}\Big]
\leq
3^{p-1} \Big( D_1^{p}(T^{p-1}+\kappa_2)\int_0^{s} \E_x\Big[\sup_{\lambda\in[0,r]}\big|Y^\gamma_\lambda\big|_d^{p}\Big] \,\ud r+\kappa_1\gamma^{p}\Big).
\]
This allows us to apply Gronwall's lemma and obtain the estimate
\begin{align*}
\E_x\Big[\sup_{\lambda\in[0,s]}\big|X_{\lambda}^{[n,\nu],\gamma}-X_{\lambda}^{[n,\nu]}\big|^p_d\Big]=\E_x\Big[\sup_{\lambda\in[0,s]}\big|Y_{\lambda}^{\gamma}\big|^p_d\Big]\leq K_3 \gamma^{p},\qquad s\in[0,T-t],
\end{align*}
with $K_3= K_3(D_1,d,p,T)$ independent of $(n,\nu)\in\cA_t$. 

Now, take $p \in [1,2)$. By Jensen's inequality we get
\begin{align*}
\E_x\Big[\sup_{\lambda\in[0,s]}\big|X_{\lambda}^{[n,\nu],\gamma}-X_{\lambda}^{[n,\nu]}\big|_d^p\Big]
&\leq 
\Big(\E_x\Big[\sup_{\lambda\in[0,s]}\big|X_{\lambda}^{[n,\nu],\gamma}-X_{\lambda}^{[n,\nu]}\big|_d^2\Big]\Big)^{p/2}\\
&\leq \big(\gamma^{2} K_3(D_1, d, 2, T)\big)^{p/2}=\gamma^p K_3(D_1, d, 2, T)^{p/2},
\end{align*}
with $K_3(D_1, d, p, T) =K_3(D_1,d,2,T)^{p/2}$.
\end{proof}

Under our assumptions we can guarantee an $L^1$-bound on the controlled dynamics, uniformly over the class of admissible controls. That will be useful later on when relaxing Assumption \ref{ass:gen3}.

\begin{lemma}\label{lem:stabL1res}
Under Assumption \ref{ass:gen1}, there exists a constant $K_4>0$ such that for arbitrary $(t,x)\in\R^{d+1}_{0,T}$, $(n,\nu)\in\cA_{t,x}^{opt}$ and a stopping time $\tau\in\cT_t$
\begin{align}\label{eq:XgamK4}
\E_x\big[\big|X_{\tau}^{[n,\nu],\gamma}\big|_d\big]\leq K_4(1+|x|_d), \qquad \gamma \in [0,1).
\end{align}
The constant $K_4$ depends on $d, D_1, D_3, K_2, T, f(T)$ and, in the case of Assumption \ref{ass:gen1}(i.b), also on $D_2$.
\end{lemma}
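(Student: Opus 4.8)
The plan is to bound $\E_x\big[\big|X_\tau^{[n,\nu],\gamma}\big|_d\big]$ directly by working with the SDE \eqref{eq:SDEgamma} (the case $\gamma=0$ being \eqref{eq:SDE}) and taking absolute values coordinate by coordinate. The key difficulty, as flagged in the text following Assumption \ref{ass:gen1}, is that a naive Gr\"onwall argument on $\E_x[|X_s^{[n,\nu],\gamma}|_d]$ would, via It\^o's formula applied to $|\cdot|_d$ (or $|\cdot|_d^2$), produce a term involving $\mathrm{tr}(a_\gamma(X_s))$, whose linear growth in $|X_s|_d$ (only under (i.b)) or lack thereof is the crux; and squaring to handle the martingale part brings in $\E_x[\nu_{T-t}^2]$, which is not controlled in $\cA^{opt}_{t,x}$. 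So the martingale part of the stochastic integral must be killed by a localisation/optional-stopping argument rather than by an $L^2$ estimate, and the drift-of-the-quadratic-variation term must be handled separately under each of (i.a), (i.b).

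First I would fix $(n,\nu)\in\cA^{opt}_{t,x}$, $\gamma\in[0,1)$, and $\tau\in\cT_t$, introduce a localising sequence of stopping times $\rho_k\uparrow T-t$ with $X^{[n,\nu],\gamma}$ bounded on $[0,\rho_k]$, and work with $\tau\wedge\rho_k$. Applying It\^o to $\phi_\eps(y)=\sqrt{|y|_d^2+\eps}$ (a smooth approximation of $|\cdot|_d$, with $|\nabla\phi_\eps|\le 1$ and $D^2\phi_\eps\le \phi_\eps^{-1}\mathrm{I}_d$), between $0-$ and $\tau\wedge\rho_k$, and taking $\E_x$, the martingale terms (from $\ud W_s$ and $\ud\widetilde W_s$) vanish by the localisation; I am left with a drift term $\le\int_0^\cdot|b(X_s)|_d\,\ud s$ bounded by $D_3\int(1+|X_s|_d)\,\ud s$ via \eqref{eq:lingrowcff2}; a second-order term $\tfrac12\int \mathrm{tr}\big(a_\gamma(X_s)D^2\phi_\eps(X_s)\big)\,\ud s\le \tfrac12\int \phi_\eps(X_s)^{-1}\,\mathrm{tr}(a_\gamma(X_s))\,\ud s$; the control term $\int_{[0,\cdot]}\langle\nabla\phi_\eps(X_{s-}),n_s\rangle\,\ud\nu^c_s$ plus jump contributions, each bounded in absolute value by $\ud\nu_s$ (since $|\nabla\phi_\eps|\le1$ and $\phi_\eps$ is $1$-Lipschitz), so this is at most $\E_x[\nu_{T-t}]\le K_2(1+|x|_d)$; and the initial value $\phi_\eps(x)\le|x|_d+\sqrt\eps$.

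The main obstacle is the second-order term. Under Assumption \ref{ass:gen1}(i.b), $\mathrm{tr}(a_\gamma(X_s))=|\sigma(X_s)|_{d\times d'}^2+d\gamma^2\le D_2^2(1+|X_s|_d)+d$, which after dividing by $\phi_\eps(X_s)\ge$ const is bounded linearly in $1+|X_s|_d$ — good enough for Gr\"onwall. Under (i.a), $a_\gamma$ is block-diagonal-free but each $\sigma_{ij}(x)=\sigma_{ij}(x_i)$ grows at most linearly in $|x_i|$, so $a_{\gamma,ii}(x)=\sum_j\sigma_{ij}(x_i)^2+\gamma^2$ and the cleaner route is to apply It\^o not to $\phi_\eps(|X|_d)$ but coordinatewise to $\psi_\eps(x_i)=\sqrt{x_i^2+\eps}$ and sum over $i$: then $\psi_\eps''(x_i)a_{\gamma,ii}(X_s)\le \psi_\eps(X^i_s)^{-1}\big(D_1^2(1+|X^i_s|)^2+1\big)$, and since $\psi_\eps(x_i)\ge|x_i|$ this is again at most linear in $1+|X^i_s|$. (This is exactly where the structural hypotheses (i.a)/(i.b) are used, mirroring the remark in the paper.) In either case I obtain, writing $m(s):=\E_x[|X_{s}^{[n,\nu],\gamma}|_d\wedge M]$ or directly $m_k(s):=\E_x[|X_{\tau\wedge\rho_k\wedge s}^{[n,\nu],\gamma}|_d]$, an inequality $m_k(s)\le C_1(1+|x|_d)+C_2\int_0^s(1+m_k(u))\,\ud u$ with $C_1,C_2$ depending only on $d,D_1,D_2,D_3,K_2,T$ (and not on $\gamma\in[0,1)$ nor on the control), Gr\"onwall gives $m_k(\tau\wedge\rho_k)\le C_1(1+|x|_d)e^{C_2 T}=:K_4(1+|x|_d)$, and Fatou's lemma as $k\to\infty$ (and $\eps\to0$) yields \eqref{eq:XgamK4}. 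I would remark that $\gamma$-independence is automatic since $\gamma^2\le1$ contributes only the bounded constant $d\gamma^2\le d$ to the trace.
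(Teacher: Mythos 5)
Your route (It\^o applied to a smoothed norm, with localisation to kill the martingale parts) is genuinely different from the paper's and can be made to work, but the step controlling the second-order term is flawed as written. You bound $\tfrac12\mathrm{tr}\big(a_\gamma D^2\phi_\eps\big)\le\tfrac12\phi_\eps^{-1}\mathrm{tr}(a_\gamma)$ and then claim the result is ``bounded linearly in $1+|X_s|_d$'' with Gronwall constants ``depending only on $d,D_1,D_2,D_3,K_2,T$''. That is false uniformly in $\eps$: near the origin $\phi_\eps^{-1}$ is only bounded by $\eps^{-1/2}$, so the linear bound carries an $\eps^{-1/2}$ prefactor. In the coordinatewise case (i.a) the same problem is hidden in the assertion that $\psi_\eps(x_i)^{-1}(1+|x_i|)^2$ is ``at most linear in $1+|x_i|$'': using $\psi_\eps\ge|x_i|$ gives $(1+|x_i|)^2/|x_i|$, which blows up at $x_i=0$, while using $\psi_\eps\ge\sqrt\eps$ gives a bound that is linear only with an $\eps$-dependent constant. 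Consequently $C_2=C_2(\eps)\to\infty$ and the concluding limit ``$\eps\to0$'' destroys $K_4$. This is exactly the local-time obstruction: the It\^o correction for $|\cdot|_d$ is a Bessel-type term of order $\mathrm{tr}(a_\gamma)/|y|_d$, and for a possibly degenerate controlled process one cannot rule out that it spends macroscopic time near the singularity. The repair is simple and you must make it explicit: \emph{never send $\eps\to0$}. Since $\phi_\eps\ge|\cdot|_d$ pointwise, fixing $\eps=1$ already yields \eqref{eq:XgamK4}. With $\eps=1$ one has $\phi_1\ge1$, so under (i.b) the second-order term is at most $\tfrac12\mathrm{tr}(a_\gamma)\le\tfrac12\big(D_2^2(1+|y|_d)+d\big)$, genuinely linear; under (i.a) the exact formula $\psi_1''(x_i)=(1+x_i^2)^{-3/2}$ (rather than the lossy bound $\psi_\eps''\le\psi_\eps^{-1}$) decays fast enough to absorb the quadratic growth of $a_{\gamma,ii}(x_i)$, making that term bounded. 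With these modifications the Gronwall step closes and the constants have the stated dependence.

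For comparison, the paper avoids It\^o on the norm altogether. Under (i.b) it estimates $\E_x\big[\sup_s\big|\int_0^s\sigma(X_r)\,\ud W_r\big|_d\big]$ via Jensen and Doob's $L^2$ inequality, and the square-root growth of $\sigma$ converts $\E_x\big[\int|\sigma(X_r)|^2_{d\times d'}\,\ud r\big]$ back into a first-moment quantity; under (i.a) it decomposes $X^{[n,\nu],\gamma}$ as the uncontrolled process plus the controlled-minus-uncontrolled difference and invokes \cite[Lem.\ 3.8]{bovo2023degcontrols}, which bounds that difference in $L^1$ by a multiple of $\E_x[\nu_{T-t}]$. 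Your argument is more self-contained (it does not import the comparison lemma), at the cost of the smoothing and local-time bookkeeping described above.
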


\begin{proof}
The proof is slightly different, depending on whether condition (i.a) or condition (i.b) in Assumption \ref{ass:gen1} hold. Under condition (i.a), we can use an argument analogous to the one in \cite[Cor.\ 3.9]{bovo2023degcontrols}. We first notice that 
\[
\E_x\big[\big|X_\tau^{[n,\nu],\gamma}\big|_d\big]\le \E_x\big[\big|X_\tau^{[n,\nu],\gamma}-X_\tau^{[e_1,0],\gamma}\big|_d\big]+\E_x\big[\big|X_\tau^{[e_1,0],\gamma}\big|_d\big].
\]
We bound the first term using \cite[Lem.\ 3.8]{bovo2023degcontrols} (with $d_0 = d$; here we need condition (i.a)) as follows
\[
\E_x\big[\big|X_\tau^{[n,\nu],\gamma}-X_\tau^{[e_1,0],\gamma}\big|_d\big] \le \kappa\, \E_x [\nu_{T-t}] \le c (1 + |x|_d),
\]
for some constant $c > 0$, where $\kappa = \kappa (d, D_1, T)$ is independent from $\gamma \in (0,1)$ and the last inequality follows from the definition of $\cA^{opt}_{t,x}$ in Lemma \ref{lem:L1bnd}. For the uncontrolled dynamics $X^{[e_1,0],\gamma}$, standard SDE estimates (e.g., \cite[Cor.\ 2.5.12]{krylov1980controlled}) yield
\[
\E_x\big[\big|X_\tau^{[e_1,0],\gamma}\big|_d\big]\le c(1+|x|_d),
\]
for some constant $c = c(d, D_1, D_3, T)>0$ independent from $\gamma\in(0,1)$. This completes the proof of \eqref{eq:XgamK4}.

Assume now that condition (i.b) in Assumption \ref{ass:gen1} holds. To shorten the notation, we will write $X^{\gamma}$ for $X^{[n,\nu],\gamma}$. From the dynamics of $X^\gamma$ we have 
\begin{equation}\label{eq:L1normtheta}
\begin{aligned}
&\E_x\Big[\sup_{\lambda\in[0,s]}|X_{\lambda}^{\gamma}|_d\Big]\\
&\leq \E_x\Big[\sup_{\lambda\in[0,s]}\Big(\Big|\int_0^\lambda\!\!b(X_{r}^{\gamma})\,\ud r\Big|_d\!+\Big|\int_0^\lambda\!\!\sigma(X_{r}^{\gamma})\,\ud W_r\Big|_d\!+\big|\gamma\widetilde{W}_\lambda\big|_d\!+\Big|\int_0^\lambda n_r\ud\nu_r\Big|_d\Big)\Big].
\end{aligned}
\end{equation}
We estimate each term individually. Thanks to linear growth of the drift, the first term on the right-hand side of \eqref{eq:L1normtheta} can be bounded as follows
\begin{align*}
\E_x\Big[\sup_{\lambda\in[0,s]}\Big|\int_0^\lambda\!b(X_{r}^{\gamma})\,\ud r\Big|_d\Big]
&\leq
\E_x\Big[D_3\int_0^{s}\!\big(1+\sup_{r\in[0,\lambda]}|X_{r}^{\gamma}|_d\big)\,\ud \lambda\Big]\\ 
&\leq 
D_3\Big(T+\E_x\Big[\int_0^{s}\!\sup_{r\in[0,\lambda]}|X_{r}^{\gamma}|_d\,\ud \lambda\Big]\Big).
\end{align*}
For the second term on the right-hand side of \eqref{eq:L1normtheta} we use Jensen's inequality and Doob's maximal inequality, taking advantage of the square-root growth of the diffusion coefficient, to obtain 
\begin{align*}
&\E_x\Big[\sup_{\lambda\in[0,s]}\Big|\int_0^\lambda\!\sigma(X_{r}^{\gamma})\,\ud W_r\Big|_d\Big]\\
&\leq\E_x\Big[\sup_{\lambda\in[0,s]}\Big|\int_0^\lambda\!\sigma(X_{r}^{\gamma})\,\ud W_r\Big|^2_d\Big]^{\frac12}\leq
2\E_x\Big[\Big|\int_0^s\!\sigma(X_{r}^{\gamma})\,\ud W_r\Big|_d^2\Big]^{\frac{1}{2}}\\
&=
2\E_x\Big[\int_0^{s}\!\big|\sigma(X_{r}^{\gamma})\big|^2_{d\times d}\,\ud r\Big]^{\frac{1}{2}}\leq
2\Big(1+\E_x\Big[\int_0^{s}\!\big|\sigma(X_{r}^{\gamma})\big|^2_{d\times d}\,\ud r\Big]\Big)\\
&\leq
2\Big(1+D_2\Big(T+\E_x\Big[\int_0^{s}\!\sup_{r\in[0,\lambda]}\big|X_{r}^{\gamma}\big|_d\,\ud \lambda\Big]\Big)\Big),
\end{align*}
where the third inequality uses $\sqrt{x}\le 1+x$ and the final one is due to (i.b) in Assumption \ref{ass:gen1}. 
For the third term on the right-hand side of \eqref{eq:L1normtheta}, denoting by $\widetilde W^j$ the $j$-th coordinate of the Brownian motion $\widetilde W$ we have
\begin{equation}\label{eq:bndtheta0_away}
\begin{aligned}
\E_x\Big[\sup_{0\le \lambda\le s}\big|\gamma\widetilde{W}_\lambda\big|_d\Big]
&\leq 
d\gamma \E\Big[\sup_{0\le \lambda\le s}|\widetilde W^1_\lambda|\Big]
\le d\gamma \E\Big[1 + \sup_{0\le \lambda\le s}(\widetilde W^1_\lambda)^2\Big]\\
&\le d\gamma \big(1 + 4\E\big[(\widetilde W^1_s)^2\big]\big)= d\gamma(1+4s),
\end{aligned}
\end{equation}
where we use $|x|\le 1+x^2$ for the second inequality and Doob's maximal inequality for martingales for the third inequality.
Finally, by the definition of $\cA^{opt}_{t,x}$ in Lemma \ref{lem:L1bnd} we have a bound for the last term on the right-hand side of \eqref{eq:L1normtheta}:
\begin{align*}
\E_x\Big[\sup_{\lambda\in[0,s]}\Big|\int_0^sn_r\ud\nu_r\Big|_d\Big]\leq \E_x[\nu_s]\leq K_2(1+|x|_d).
\end{align*}

We combine the above five bounds to obtain
\begin{align*}
\E_x\Big[\sup_{\lambda\in[0,s]}|X_{\lambda}^{\gamma}|_d\Big]\leq&\, c_1\big(1+|x|_d\big)+c_2\int_0^s\E_x\Big[\sup_{r\in[0,\lambda]}|X_{r}^{\gamma}|_d\Big]\ud \lambda,
\end{align*}
for some constants $c_1,c_2>0$ depending on $T, D_1, D_2, d$ and $K_2$.
By Gronwall’s lemma and setting $s=T-t$, we get 
\begin{align*}
\E_x\Big[\sup_{\lambda\in[0,T-t]}|X_{\lambda}^{\gamma}|_d\Big]\leq K_4(1+|x|_d) ,
\end{align*}
for a suitable $K_4>0$, which concludes the proof.
\end{proof}

\subsection{Convergence as \texorpdfstring{$\gamma\to 0$}{gamma->0}}\label{sec:gamma->0}
Assumptions \ref{ass:gen1} and \ref{ass:gen3} hold throughout this section.
\begin{theorem}\label{thm:convugam}
The pointwise limit $u\coloneqq\lim_{\gamma\to0}u^\gamma$ exists on $\R^{d+1}_{0,T}$. Moreover, $u$ coincides with the value of the game with payoff \eqref{eq:payoff}, i.e., $u=\underline{v}=\overline{v}=v$. Furthermore, there exists $C>0$ such that
\begin{align}\label{eq:convunifuthetau}
|u^\gamma(t,x)-v(t,x)|\leq C \gamma \quad\text{for all $(t,x)\in\R^{d+1}_{0,T}$.}
\end{align}
\end{theorem}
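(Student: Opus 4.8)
The plan is to exploit the uniform estimate from Proposition \ref{prop:Lipschitzproc} together with the Lipschitz-in-space control on the payoff functions to transfer the $\gamma$-closeness of the state processes directly to $\gamma$-closeness of the payoff functionals, uniformly over admissible controls and stopping times. Concretely, for a fixed $(t,x)$, $(n,\nu)\in\cA_t$ and $\tau\in\cT_t$, I would compare $\cJ^\gamma_{t,x}(n,\nu,\tau)$ with $\cJ_{t,x}(n,\nu,\tau)$ term by term. The $\nu$-integral term is identical in both functionals (the perturbation $\gamma\ud\widetilde W_s$ does not touch it), so it cancels exactly. For the terminal term $e^{-r\tau}g(t+\tau,\cdot)$ and the running term $\int_0^\tau e^{-rs}h(t+s,\cdot)\ud s$, I use that $g$ and $h$ are Lipschitz in space with constant $K$ (Assumption \ref{ass:gen3}(iii)–(iv)), that $e^{-rs}\le 1$, and that $\tau\le T$, to bound
\[
\big|\cJ^\gamma_{t,x}(n,\nu,\tau)-\cJ_{t,x}(n,\nu,\tau)\big|
\le (1+T)\,K\,\E_x\Big[\sup_{s\in[0,T-t]}\big|X^{[n,\nu],\gamma}_s-X^{[n,\nu]}_s\big|_d\Big]
\le (1+T)\,K\,K_3^{1/2}\,\gamma,
\]
using Proposition \ref{prop:Lipschitzproc} with $p=1$ (or $p=2$ and Jensen) for the last step. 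Crucially, the right-hand side is independent of $(n,\nu)$ and $\tau$.

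Next I would pass the uniform bound through the $\sup_\tau\inf_{(n,\nu)}$ and $\inf_{(n,\nu)}\sup_\tau$ operations. Since $|\cJ^\gamma_{t,x}(n,\nu,\tau)-\cJ_{t,x}(n,\nu,\tau)|\le C\gamma$ for all $(n,\nu,\tau)$ with $C:=(1+T)KK_3^{1/2}$, standard properties of $\sup$/$\inf$ give $|\overline v(t,x)-\overline v^\gamma(t,x)|\le C\gamma$ and $|\underline v(t,x)-\underline v^\gamma(t,x)|\le C\gamma$, where $\overline v^\gamma,\underline v^\gamma$ are the upper/lower values of the $\gamma$-game. But Theorem \ref{thm:from3to5} tells us the $\gamma$-game has a value, i.e. $\overline v^\gamma=\underline v^\gamma=u^\gamma$. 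Hence $|\overline v(t,x)-u^\gamma(t,x)|\le C\gamma$ and $|\underline v(t,x)-u^\gamma(t,x)|\le C\gamma$ simultaneously, for every $\gamma\in(0,1)$. Letting $\gamma\to0$ forces $\overline v(t,x)=\underline v(t,x)$, so the original game has a value $v=\overline v=\underline v$, the pointwise limit $u=\lim_{\gamma\to0}u^\gamma$ exists and equals $v$, and the estimate \eqref{eq:convunifuthetau} holds with the same constant $C$.

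The argument is essentially routine, so there is no deep obstacle; the one point requiring a little care is making sure the estimate is genuinely uniform in the controls. The subtlety is that the admissible set $\cA_t$ is the same for all $\gamma$ (the perturbation adds an independent Brownian motion on the same probability space without enlarging the control space) and that the constant $K_3$ in Proposition \ref{prop:Lipschitzproc} does not depend on $(n,\nu)$ — both already established in the excerpt. One should also note that restricting to $\cA^{opt}_{t,x}$ (via Lemmas \ref{lem:nubnd} and \ref{lem:L1bnd}) is not even needed here, since the comparison bound holds on all of $\cA_t$; the $L^1$-bound of Lemma \ref{lem:stabL1res} will only become relevant later when Assumption \ref{ass:gen3} is relaxed. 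Finally, the elementary lemma that $|\sup_a f(a)-\sup_a g(a)|\le\sup_a|f(a)-g(a)|$ and its $\inf$/nested analogues should be invoked explicitly (or proved in one line) to justify propagating the bound through \eqref{eq:lowuppvfnc_2} and \eqref{eq:gametheta}.
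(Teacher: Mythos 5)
Your proposal is correct and follows essentially the same route as the paper: the core step in both is the term-by-term comparison of $\cJ^\gamma_{t,x}$ and $\cJ_{t,x}$ via the spatial Lipschitz continuity of $g,h$ and the uniform (in $(n,\nu)$) estimate of Proposition \ref{prop:Lipschitzproc}, after which the bound is pushed through the $\sup$/$\inf$ operations. The only cosmetic difference is that you invoke the abstract inequality $|\adjustlimits\inf_a\sup_b F-\adjustlimits\inf_a\sup_b G|\le\sup_{a,b}|F-G|$ while the paper unwinds it with $\eta$-optimal selections of $(n,\nu)$ and $\tau$; these are the same argument.
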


\begin{proof}
Let $u^{\gamma}$ be the value of the game from Theorem \ref{thm:from3to5} and set 
\[
\underline{u}\coloneqq \liminf_{\gamma\to0}u^\gamma\quad\text{ and }\quad\overline{u}\coloneqq \limsup_{\gamma\to0}u^\gamma.
\]
We are going to show that 
\begin{align*}
\overline{u}(t,x)\leq \underline{v}(t,x) \quad\text{and}\quad \underline{u}(t,x)\geq \overline{v}(t,x)
\end{align*}
for all $(t,x)\in\R^{d+1}_{0,T}$, so that $\underline{u}=\overline{u}=\underline{v}=\overline{v}=v$ as claimed.

Let us first prove that $\underline{u}\geq \overline{v}$. Fix $(t,x)\in\RdT$ and $\eta > 0$.  Let $(n,\nu)\in\cA_t$ be an $\eta$-optimal control for $u^\gamma(t,x)$, in the sense that $u^\gamma(t,x)\ge \sup_{\sigma\in\cT_t}\cJ^\gamma_{t,x}(n,\nu,\sigma)-\eta$; bear in mind that $(n,\nu)$ depends on $\gamma$. Then, $\overline v(t,x)\le \sup_{\sigma\in\cT_t}\cJ_{t,x}(n,\nu,\sigma)$ and we can pick a stopping time $\tau\in\cT_t$ such that $\sup_{\sigma\in\cT_t}\cJ_{t,x}(n,\nu,\sigma)\le \cJ_{t,x}(n,\nu,\tau)+\eta$; bear in mind that $\tau$ depends on $(n,\nu)$ and $\eta$.
Recall that the processes $X^{[n,\nu],\gamma}$ and $ X^{[n,\nu]}$ are solutions of \eqref{eq:SDEgamma} and \eqref{eq:SDE}, respectively. 
Then
\begin{align}\label{eq:splitbetalip}
u^{\gamma}(t,x)-\overline{v}(t,x)\notag\geq&\, \cJ_{t,x}^{\gamma}(n,\nu,\tau)-\cJ_{t,x}(n,\nu,\tau)-2\eta\notag\\
=&\,\E_x\Big[e^{-r\tau}\big(g(t+\tau,X_{\tau}^{[n,\nu],\gamma})-g(t+\tau,X_{\tau}^{[n,\nu]})\big)\notag\\
&\qquad+\int_0^\tau\!e^{-rs}\big(h(t+s,X_{s}^{[n,\nu],\gamma})-h(t+s,X_{s}^{[n,\nu]})\big)\,\ud s\Big]-2\eta\\
\geq &\,-K\E_x\Big[\big|X_{\tau}^{[n,\nu],\gamma}-X_{\tau}^{[n,\nu]}\big|_d+\int_0^{T-t}\big|X_{s}^{[n,\nu],\gamma}-X_{s}^{[n,\nu]}\big|_d\,\ud s\Big]-2\eta\notag\\
\geq &\,-K(1+T)\E_x\Big[\sup_{s\in[0,T-t]}\big|X_{s}^{[n,\nu],\gamma}-X_{s}^{[n,\nu]}\big|_d\Big]-2\eta,\notag
\end{align}
where $K>0$ is the Lipschitz constant from \eqref{eq:ghsmooth}. By Proposition \ref{prop:Lipschitzproc} we have the following bound:
\begin{equation}\label{eqn:th44a}
u^{\gamma}(t,x)-\overline{v}(t,x)\geq -K(1+T)K_3\gamma-2\eta.
\end{equation}
Taking $\liminf$ as $\gamma\to0$ on both sides, we get $\underline{u}(t,x)-\overline{v}(t,x)\geq-2\eta$. Recalling that $\eta > 0$ is arbitrary, we conclude that $\underline{u}(t,x)\geq \overline{v}(t,x)$ as claimed.

The proof of the inequality $\overline{u}\leq \underline{v}$ follows similar lines. Fix $(t,x) \in \RdT$ and $\eta > 0$. Let $\tau\in\cT_t$ be an $\eta$-optimal stopping time for $u^{\gamma}$, in the sense that $u^\gamma(t,x)\le \inf_{(n,\nu)\in\cA_t}\cJ^\gamma_{t,x}(n,\nu,\tau)+\eta$ and $(n,\nu)\in\cA^{opt}_{t,x}$ be such that $\inf_{(n',\nu')\in\cA_t}\cJ_{t,x}(n',\nu',\tau)\ge \cJ_{t,x}(n,\nu,\tau)-\eta$. Since $u^\gamma(t,x)\le \cJ^\gamma_{t,x}(n,\nu,\tau)+\eta$ and $\underline{v}(t,x) \ge \cJ_{t,x}(n,\nu,\tau)-\eta$, we can repeat the estimates above and obtain
\begin{equation}\label{eqn:th44b}
u^{\gamma}(t,x)-\underline{v}(t,x)\leq K(1+T)K_3\gamma+2\eta.
\end{equation}
Taking $\limsup$ as $\gamma\downarrow0$ and thanks to the arbitrariness of $\eta$ we get $\overline{u}(t,x)\leq \underline{v}(t,x)$. Similarly, inequalities \eqref{eqn:th44a} and \eqref{eqn:th44b} imply \eqref{eq:convunifuthetau}. 
\end{proof}

\begin{remark}\label{rem:uniform_conv}
The rate of convergence of $u^\gamma$ to $v$ is linear in $\gamma$ and uniform over $\RdT$. In particular, $v$ is continuous.
\end{remark}

As for $u^\gamma$, the boundedness and positivity of $f,g,h$ imply that the value function of the game $v$ is bounded.
Notice that the variational inequality \eqref{eq:varineq} implies a bound on the gradient of $u^\gamma$: $|\nabla u^\gamma|_d\leq f$ for $\gamma>0$. This, together with the above remark, yields the next corollary.
\begin{corollary}\label{cor:5.2}
The value function $v$ is is bounded on $\RdT$ and it is Lipschitz in the spatial coordinates with constant bounded by $f$, i.e., $|\nabla v(t,x)|_d\leq f(t)$ for a.e. $(t,x)\in\R^{d+1}_{0,T}$. In particular, since $f$ is non-increasing in time, we have that $v$ is Lipschitz in space with constant $f(0)$.
\end{corollary}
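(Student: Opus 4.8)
The plan is to transfer both properties from the approximating values $u^\gamma$, where they are already available, by letting $\gamma\to 0$ and invoking Theorem~\ref{thm:convugam}.

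\emph{Boundedness.} Since $f,g,h\ge 0$, the payoff $\cJ^\gamma_{t,x}$ is non-negative for every admissible pair, so $u^\gamma\ge 0$; for the upper bound I would evaluate the outer $\sup_\tau\inf_{(n,\nu)}$ in \eqref{eq:gametheta} at the admissible control $(n,\nu)=(e_1,0)$, which makes the control-cost term $\int_{[0,\tau]}e^{-rs}f(t+s)\,\ud\nu_s$ vanish and leaves $0\le u^\gamma(t,x)\le \sup_{\tau\in\cT_t}\E_x\big[e^{-r\tau}g(t+\tau,X^{[e_1,0],\gamma}_\tau)+\int_0^\tau e^{-rs}h(t+s,X^{[e_1,0],\gamma}_s)\,\ud s\big]\le \|g\|_\infty+T\|h\|_\infty$, using $r\ge 0$ and Assumption~\ref{ass:gen3}(iv). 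This bound is independent of $\gamma$, so passing to the limit with Theorem~\ref{thm:convugam} gives $0\le v\le\|g\|_\infty+T\|h\|_\infty$ on $\RdT$ (this is the observation made just before the statement).

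\emph{Spatial Lipschitz bound.} The variational inequality \eqref{eq:varineq} forces $f-|\nabla u^\gamma|_d\ge 0$, i.e.\ $|\nabla u^\gamma(t,x)|_d\le f(t)$ for a.e.\ $(t,x)\in\RdT$. Since $u^\gamma\in W^{1,2,p}_{\ell oc}(\R^{d+1}_{0,T})$ for all $p\in[1,\infty)$, by Sobolev embedding $u^\gamma$ admits a continuous representative, and by Fubini for a.e.\ fixed $t$ the section $u^\gamma(t,\cdot)$ lies in $W^{1,p}_{\ell oc}(\R^d)$ with weak gradient bounded in $d$-norm by the \emph{constant} $f(t)$; the standard Morrey / absolute-continuity-on-lines argument then yields $|u^\gamma(t,x)-u^\gamma(t,y)|\le f(t)\,|x-y|_d$ for those $t$ and all $x,y\in\R^d$. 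Continuity of $u^\gamma$ and of $f$ (Assumption~\ref{ass:gen3}(ii)) upgrades this to every $t\in[0,T]$.

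\emph{Passage to the limit.} Using the pointwise (indeed uniform) convergence $u^\gamma\to v$ from Theorem~\ref{thm:convugam}, for every $t\in[0,T]$ and $x,y\in\R^d$ we get $|v(t,x)-v(t,y)|=\lim_{\gamma\to0}|u^\gamma(t,x)-u^\gamma(t,y)|\le f(t)\,|x-y|_d$. Hence $v(t,\cdot)$ is Lipschitz with constant $f(t)$, so its weak spatial gradient satisfies $|\nabla v(t,x)|_d\le f(t)$ for a.e.\ $x$, and therefore for a.e.\ $(t,x)\in\RdT$; the final assertion follows at once from $f(t)\le f(0)$ by the monotonicity in Assumption~\ref{ass:gen3}(ii). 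The only mildly delicate point is the passage from the a.e.\ Sobolev bound on $\nabla u^\gamma$ to the pointwise spatial Lipschitz inequality at a fixed time and its extension to all $t$; everything else is routine.
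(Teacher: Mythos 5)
Your proposal is correct and follows essentially the same route as the paper: non-negativity plus the sub-optimal control $(e_1,0)$ give a $\gamma$-independent bound on $u^\gamma$, the gradient constraint in \eqref{eq:varineq} gives $|\nabla u^\gamma|_d\le f$, and both properties pass to $v$ via the uniform convergence of Theorem~\ref{thm:convugam}. You merely spell out in more detail the (standard) step from the a.e.\ Sobolev gradient bound to the pointwise spatial Lipschitz estimate, which the paper leaves implicit.
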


We turn our attention to the optimality of the stopping time $\theta_*=\tau_*\wedge\sigma_*$, where $\tau_*$ and $\sigma_*$ are defined in \eqref{eq:taustar}. Recall that these stopping times are hitting times of the underlying process so they depend on the control $(n, \nu)$. We emphasise that we work under Assumption \ref{ass:gen3}. 
\begin{lemma}\label{lem:convth}
Recall $\theta^\gamma_*$ as in Lemma \ref{lem:theta*opt} and fix $(t,x)\in\R^{d+1}_{0,T}$. For any $(n,\nu)\in\cA_t$, there is a sequence $(\gamma_k)_{k\in\N}$, converging to zero as $k\to\infty$ and possibly depending on $(n,\nu)$, for which 
\[
\liminf_{k\to\infty}\theta^{\gamma_k}_*(t, x; n,\nu)\ge \theta_*(t, x; n,\nu),\qquad\P_x-a.s.
\]
\end{lemma}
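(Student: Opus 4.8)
The statement concerns lower semicontinuity of the hitting times $\theta^\gamma_*=\tau^\gamma_*\wedge\sigma^\gamma_*$ along a subsequence $\gamma_k\to 0$. The plan is to exploit two facts established earlier: the uniform convergence $u^\gamma\to v$ from Theorem \ref{thm:convugam} (with rate $C\gamma$), and the path convergence $X^{[n,\nu],\gamma}\to X^{[n,\nu]}$ from Proposition \ref{prop:Lipschitzproc}. The latter gives, for a fixed $(n,\nu)\in\cA_t$, that $\E_x[\sup_{s\in[0,T-t]}|X^{[n,\nu],\gamma}_s-X^{[n,\nu]}_s|_d]\le K_3\gamma\to 0$, so along some subsequence $(\gamma_k)$ (depending on $(n,\nu)$) we have $\sup_{s}|X^{[n,\nu],\gamma_k}_s-X^{[n,\nu]}_s|_d\to 0$ $\P_x$-a.s. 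On that almost-sure event, combined with uniform convergence of $u^\gamma$ and uniform continuity of $v$ and $g$ on compacts, the function $s\mapsto (u^{\gamma_k}-g)(t+s,X^{[n,\nu],\gamma_k}_s)$ converges uniformly in $s\in[0,T-t]$ to $s\mapsto (v-g)(t+s,X^{[n,\nu]}_s)$, and likewise with left limits $X^{[n,\nu],\gamma_k}_{s-}\to X^{[n,\nu]}_{s-}$ uniformly (since uniform convergence of càdlàg paths transfers to left limits).

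The heart of the argument is then the elementary real-analysis fact that first hitting times of a closed set are lower semicontinuous under uniform convergence of the argument functions. Concretely: write $\tau^\gamma_*=\inf\{s\ge 0:\varphi^\gamma(s)\le 0\}$ with $\varphi^\gamma(s)\coloneqq (u^\gamma-g)(t+s,X^{[n,\nu],\gamma}_s)\ge 0$ (non-negativity because $u^\gamma\ge g$, which holds since $u^\gamma$ solves the variational inequality \eqref{eq:varineq} with obstacle $g$), and similarly $\sigma^\gamma_*=\inf\{s\ge 0:\psi^\gamma(s)\le 0\}$ with $\psi^\gamma(s)\coloneqq(u^\gamma-g)(t+s,X^{[n,\nu],\gamma}_{s-})$. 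Suppose, working on the a.s.\ event above, that $\liminf_k\theta^{\gamma_k}_*<\theta_*$; then along a further subsequence $\theta^{\gamma_k}_*\to \rho<\theta_*=\tau_*\wedge\sigma_*$. Since $\theta^{\gamma_k}_*=\tau^{\gamma_k}_*\wedge\sigma^{\gamma_k}_*$, pass to a subsequence on which, say, $\theta^{\gamma_k}_*=\tau^{\gamma_k}_*$ for all $k$ (or $=\sigma^{\gamma_k}_*$ for all $k$; handle the two cases symmetrically). In the first case, for each $k$ pick $s_k$ with $s_k\downarrow \tau^{\gamma_k}_*$ close enough that $\varphi^{\gamma_k}(s_k)\le \varepsilon_k\to 0$; after extracting a convergent subsequence $s_k\to \rho$, uniform convergence gives $(v-g)(t+\rho,X^{[n,\nu]}_\rho)\le 0$ if $s_k$ can be taken $\ge\rho$, hence $\tau_*\le\rho<\theta_*$, a contradiction; a parallel argument using left limits handles the $\sigma^{\gamma_k}_*$ case against $\sigma_*$. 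This forces $\liminf_k\theta^{\gamma_k}_*\ge\theta_*$ on the a.s.\ event, which is the claim.

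The main obstacle is the subtlety of hitting a closed sublevel set via sequences from the right versus the left, which is exactly why two stopping times $\tau^\gamma_*$ and $\sigma^\gamma_*$ (and $\tau_*$, $\sigma_*$) are needed. If $\varphi^{\gamma_k}(s)>0$ for all $s<\tau^{\gamma_k}_*$ but $\varphi^{\gamma_k}(\tau^{\gamma_k}_*)=0$, the limit point $\rho$ of $\tau^{\gamma_k}_*$ satisfies $(v-g)(t+\rho,X^{[n,\nu]}_\rho)\le 0$ only when $X^{[n,\nu]}$ is right-continuous at $\rho$ — which it is — but if the approximating minima are attained at left limits one only gets $(v-g)(t+\rho,X^{[n,\nu]}_{\rho-})\le 0$, i.e.\ $\sigma_*\le\rho$. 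Careful bookkeeping, splitting according to whether the near-minimisers $s_k$ approach $\rho$ from above or from below and whether the relevant process value is $X^{[n,\nu],\gamma_k}_{s_k}$ or $X^{[n,\nu],\gamma_k}_{s_k-}$, ensures the contradiction always lands on either $\tau_*\le\rho$ or $\sigma_*\le\rho$, hence on $\theta_*\le\rho$. One small technical point to dispatch along the way: to upgrade $L^1$-convergence of the path supremum to an almost-sure statement along a subsequence one uses the standard fact that $L^1$ convergence implies a.s.\ convergence along a subsequence; the subsequence therefore depends on $(n,\nu)$, consistent with the statement.
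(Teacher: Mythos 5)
Your proposal is correct and follows essentially the same route as the paper: extract an a.s.-convergent subsequence of paths via Proposition \ref{prop:Lipschitzproc}, combine it with the uniform convergence $u^\gamma\to v$ and the (uniform-in-$\gamma$) spatial regularity to get uniform convergence of $(u^{\gamma_k}-g)$ evaluated along the perturbed paths and their left limits, and then deduce lower semicontinuity of the first time that $\min\{\hat Z^k_{s-},\hat Z^k_s\}$ hits zero. The only difference is organisational: the paper argues directly, using lower semicontinuity of $s\mapsto\min\{Z_s,Z_{s-}\}$ to obtain a strictly positive lower bound $\lambda_{\delta,\omega}$ on $[0,\delta]$ for every $\delta<\theta_*$ and transferring it to $\hat Z^k$ for large $k$, whereas you argue by contradiction via limit points of near-zeros of $\hat Z^{\gamma_k}$, which necessitates the extra case-splitting (right versus left limits, $\tau^{\gamma_k}_*$ versus $\sigma^{\gamma_k}_*$) that you correctly flag; both arguments are valid.
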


\begin{proof}
Fix $(t,x)\in\R^{d+1}_{0,T}$ and take $(n,\nu)\in\cA_t$. Thanks to Proposition \ref{prop:Lipschitzproc} there exists a sequence $(\gamma_k)\subset (0,1)$ such that 
\begin{align}\label{eq:unif}
\lim_{k\to \infty}\sup_{s\in[0,T-t]}\big|X^{[n,\nu],\gamma_k}_s-X^{[n,\nu]}_s\big|_d=0,\quad\P_x-a.s.
\end{align}
Let us denote 
\begin{align*}
&Z_s=(v-g)(t+s,X^{[n,\nu]}_s),\quad \quad Z^{k}_s=(u^{\gamma_k}-g)(t+s,X^{[n,\nu]}_s)\quad \text{and}\\
&\hat Z_s^{k}=(u^{\gamma_k}-g)(t+s,X^{[n,\nu],\gamma_k}_s).
\end{align*}
Notice that the stopping time $\theta_*^{\gamma_k}$ admits an equivalent representation as the first time that either $\hat Z^{k}_{s-}$ or $\hat Z^{k}_s$ is equal to $0$, i.e., $\theta_*^{\gamma_k}=\inf\{s\ge 0:\min\{\hat Z^k_{s-},\hat Z^k_s\}=0\}$. 

For $\omega\in\Omega$ such that $\theta_*(\omega)=0$ the claim in the lemma is trivial. Let $\omega\in\Omega$ be such that $\theta_*(\omega)>0$. Take arbitrary $\delta<\theta_*(\omega)$. Then, by the definition of $\tau_*$ and $\sigma_*$ we have
\[
\min\{Z_s(\omega),Z_{s-}(\omega)\}>0 \quad\text{for all $s\in[0,\delta]$}.
\]
As a result of Remark \ref{rem:uniform_conv}, $v-g$ is continuous, so the process $s \mapsto Z_s$ is right-continuous with left-limits. Hence, the mapping $s\mapsto \min\{Z_s(\omega),Z_{s-}(\omega)\}$ is lower semi-continuous and there exists $\lambda_{\delta,\omega}>0$ such that 
\[
\inf_{0\le s\le \delta}\min\{Z_s(\omega),Z_{s-}(\omega)\}\ge\lambda_{\delta,\omega}.
\]
Uniform convergence of $u^\gamma$ to $v$ (Theorem \ref{thm:convugam}) yields
\begin{align*}
\lim_{k\to \infty}\sup_{0\le s\le \delta}\big(|Z^k_s(\omega)-Z_s(\omega)|+|Z^k_{s-}(\omega)-Z_{s-}(\omega)|\big)=0.
\end{align*}
Moreover, Lipschitz continuity of $u^\gamma$ and $g$ (recall that $|\nabla u^\gamma|_d\le f(0)$ and $|\nabla g|_d\le f(0)$) and the convergence \eqref{eq:unif} give
\begin{align*}
\lim_{k\to \infty}&\sup_{0\le s\le \delta}\big(|\hat Z^{k}_s(\omega)-Z_s^{k}(\omega)|+|\hat Z^{k}_{s-}(\omega)-Z_{s-}^k(\omega)| \big)\\
& \leq 2 f(0)\lim_{k\to\infty}\sup_{0\le s\le \delta}\big(|X^{[n,\nu],\gamma_k}_s(\omega)-X_s^{[n,\nu]}(\omega)|_d+|X^{[n,\nu],\gamma_k}_{s-}(\omega)-X_{s-}^{[n,\nu]}(\omega)|_d\big)=0.
\end{align*}
Hence, for all sufficiently large $k$ (so all small enough $\gamma_k$) we have
\[
\inf_{0\le s\le \delta}\min\{\hat Z^{k}_s(\omega),\hat Z^{k}_{s-}(\omega)\}\ge\frac{\lambda_{\delta,\omega}}{2},
\]
which implies 
\[
\liminf_{k\to\infty}\theta_*^{\gamma_k}(\omega)\ge \delta.
\]
By the arbitrariness of $\delta$, we conclude that $\liminf_{k\to\infty}\theta_*^{\gamma_k}(\omega)\ge\theta_*(\omega)$.
\end{proof}

An adaptation to our setting of arguments from \cite[Thm.\ 4.4]{bovo2023degcontrols} allows us to prove the optimality of the stopping time $\theta_*$.
\begin{theorem}\label{thm:opttaustar}
For any $(t,x)\in\R^{d+1}_{0,T}$, we have 
\[
v(t,x)=\inf_{(n,\nu)\in\cA_{t}}\cJ_{t,x}\big(n,\nu,\theta_*(t, x; n,\nu)\big),
\]  
hence $\theta_*$ is optimal for the stopper in the game with value $v$.
\end{theorem}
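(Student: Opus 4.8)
The plan is to show the two inequalities $v(t,x)\le\inf_{(n,\nu)}\cJ_{t,x}(n,\nu,\theta_*)$ and $v(t,x)\ge\inf_{(n,\nu)}\cJ_{t,x}(n,\nu,\theta_*)$ separately, exploiting the convergence results already in place. The second inequality is immediate: $\theta_*$ is one admissible stopping time, so $\inf_{(n,\nu)}\cJ_{t,x}(n,\nu,\theta_*)\le\sup_{\tau}\inf_{(n,\nu)}\cJ_{t,x}(n,\nu,\tau)=\underline v(t,x)=v(t,x)$. The substantive content is the first (lower) bound, which says that no matter which control the controller picks, stopping at $\theta_*$ already secures at least the value.

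For the lower bound, fix $(t,x)$ and $(n,\nu)\in\cA_t$, and pass to the sequence $(\gamma_k)$ furnished by Lemma \ref{lem:convth}, along which $X^{[n,\nu],\gamma_k}\to X^{[n,\nu]}$ uniformly $\P_x$-a.s.\ and $\liminf_k\theta_*^{\gamma_k}\ge\theta_*$. By Lemma \ref{lem:theta*opt}, $u^{\gamma_k}(t,x)\le\cJ^{\gamma_k}_{t,x}(n,\nu,\theta_*^{\gamma_k})$ for every $k$. The idea is to take $k\to\infty$: the left-hand side converges to $v(t,x)$ by Theorem \ref{thm:convugam}, and the plan is to show that $\liminf_k\cJ^{\gamma_k}_{t,x}(n,\nu,\theta_*^{\gamma_k})\le\cJ_{t,x}(n,\nu,\theta_*)$, which yields $v(t,x)\le\cJ_{t,x}(n,\nu,\theta_*)$; taking the infimum over $(n,\nu)$ then finishes the proof. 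To handle the right-hand side I would split the error into two pieces: first replace $\cJ^{\gamma_k}_{t,x}(n,\nu,\theta_*^{\gamma_k})$ by $\cJ_{t,x}(n,\nu,\theta_*^{\gamma_k})$ (same stopping time, unperturbed dynamics and the smooth payoffs from Assumption \ref{ass:gen3}), controlling the difference via the Lipschitz estimate \eqref{eq:ghsmooth} on $g,h$ and Proposition \ref{prop:Lipschitzproc}, exactly as in the proof of Theorem \ref{thm:convugam}; second, pass to the limit in $\cJ_{t,x}(n,\nu,\theta_*^{\gamma_k})$ using the a.s.\ convergence of $\theta_*^{\gamma_k}$ together with dominated convergence (the integrands are dominated using boundedness of $g,h,f$ from Assumption \ref{ass:gen3} and the moment bound $\E_x[\nu_{T-t}^2]<\infty$ for the control term).

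The main obstacle is the passage to the limit in the stopping time, because $\theta\mapsto\cJ_{t,x}(n,\nu,\theta)$ need not be continuous for general $\theta_*^{\gamma_k}\to\theta_*$, and Lemma \ref{lem:convth} only gives $\liminf_k\theta_*^{\gamma_k}\ge\theta_*$ — not equality. One must argue that this one-sided convergence still suffices in the right direction. The key point is that at $\theta_*$ the state is on the contact set $\{v=g\}$ (by right-continuity of $X^{[n,\nu]}$, the continuity of $v-g$, and the definition of $\theta_*$ as $\tau_*\wedge\sigma_*$ — the $\sigma_*$ component is precisely what makes $X^{[n,\nu]}_{\theta_*-}$ or $X^{[n,\nu]}_{\theta_*}$ land on $\{v=g\}$). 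On the event $\{\theta_*^{\gamma_k}\downarrow\theta_*\}$-type behaviour one uses right-continuity and the terminal payoff structure; where $\theta_*^{\gamma_k}$ overshoots $\theta_*$ the controller is effectively stopping later, which — combined with the fact that for the optimal continuation the process stays in $\{v\ge g\}$ — does not help the controller. I expect this to be carried out by adapting the supermartingale/martingale characterisation of $v$ along optimal play from \cite[Thm.\ 4.4]{bovo2023degcontrols}: one shows $(e^{-rs}v(t+s,X^{[n,\nu]}_s)+\int_0^s e^{-ru}h\,\ud u+\int_{[0,s]}e^{-ru}f\,\ud\nu_u)$ stopped at $\theta_*$ is a submartingale (or uses the analogous inequality for $u^{\gamma_k}$ and passes to the limit), so that $v(t,x)\le\cJ_{t,x}(n,\nu,\theta_*)$ follows by optional sampling, with $\theta_*$ chosen exactly so that the value equals the obstacle at the stopping instant and hence no surplus is lost.
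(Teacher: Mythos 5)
Your skeleton matches the paper's (approximate by $u^{\gamma_k}$, invoke the optimality of $\theta_*^{\gamma_k}$ from Lemma \ref{lem:theta*opt}, pass to the limit along the sequence of Lemma \ref{lem:convth}, use Proposition \ref{prop:Lipschitzproc} and dominated convergence), and you correctly flag the one-sided convergence of the stopping times as the crux. But the resolution you sketch does not close the gap. The inequality you need, $\liminf_k\cJ^{\gamma_k}_{t,x}(n,\nu,\theta_*^{\gamma_k})\le\cJ_{t,x}(n,\nu,\theta_*)$, fails for the reason you half-identify and then argue away incorrectly: if $\theta_*^{\gamma_k}$ overshoots $\theta_*$, it is the \emph{stopper} (the maximiser) who is stopping later, and a later stop can only make the payoff larger, i.e.\ the error goes in the wrong direction. ``Stopping later does not help the controller'' confuses the two players' roles. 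The paper's device is to never let the approximating stopping time exceed $\theta_*$: one applies the verification (Dynkin-type) inequality for $u^{\gamma_k}$ at the \emph{capped} time $\rho_k:=\theta_*^{\gamma_k}\wedge\theta_*$ (legitimate because the value process is a submartingale up to the optimal time $\theta_*^{\gamma_k}$, and $\rho_k\le\theta_*^{\gamma_k}$), with $u^{\gamma_k}$ evaluated at the left limit $X^{[n,\nu],\gamma_k}_{\rho_k-}$ and the control cost integrated over $[0,\rho_k)$. Then $\liminf_k\theta_*^{\gamma_k}\ge\theta_*$ forces $\rho_k\to\theta_*$ \emph{from below}, and left-continuity of $s\mapsto X^{[n,\nu]}_{s-}$ and $s\mapsto\int_{[0,s)}e^{-ru}f(t+u)\,\ud\nu_u$ makes the limit passage work. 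Your last paragraph gestures at ``the analogous inequality for $u^{\gamma_k}$ plus optional sampling'', which is indeed the right family of ideas, but without the capping at $\theta_*$ the argument does not go through.

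There is a second missing step. After the limit one only obtains
$v(t,x)\le\E_x\big[e^{-r\theta_*}v(t+\theta_*,X^{[n,\nu]}_{\theta_*-})+\int_0^{\theta_*}e^{-rs}h\,\ud s+\int_{[0,\theta_*)}e^{-rs}f\,\ud\nu_s\big]$,
which is \emph{not} yet $\cJ_{t,x}(n,\nu,\theta_*)$: the payoff has $g$ evaluated at $X_{\theta_*}$ (not $v$ at $X_{\theta_*-}$) and includes the cost of a possible jump $\Delta\nu_{\theta_*}$. Converting one to the other is exactly where the definition $\theta_*=\tau_*\wedge\sigma_*$ earns its keep: on $\{\sigma_*<\tau_*\}$ one has $v(t+\theta_*,X_{\theta_*-})=g(t+\sigma_*,X_{\sigma_*-})\le g(t+\sigma_*,X_{\sigma_*})+f(t+\sigma_*)\Delta\nu_{\sigma_*}$ by $|\nabla g|_d\le f$, while on $\{\sigma_*\ge\tau_*\}$ one uses $|\nabla v|_d\le f$ and $v(t+\tau_*,X_{\tau_*})=g(t+\tau_*,X_{\tau_*})$. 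Your proposal does not contain this case split, and without it the claimed inequality $v(t,x)\le\cJ_{t,x}(n,\nu,\theta_*)$ is not reached. Finally, a minor point: the ``easy'' direction should be routed through the upper value, $\cJ_{t,x}(n,\nu,\theta_*(n,\nu))\le\sup_\tau\cJ_{t,x}(n,\nu,\tau)$ followed by an infimum over $(n,\nu)$, since $\theta_*$ depends on $(n,\nu)$ and is therefore not an admissible $\tau$ in the $\sup_\tau\inf_{(n,\nu)}$ formulation you invoke.
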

\begin{proof}
We fix $(n,\nu)\in\cA_{t}$ arbitrarily but independent of $\gamma$. By standard verification arguments for $u^\gamma$ (see the first part of the proof of \cite[Thm.\ 4.4]{bovo2023degcontrols} replacing $f$ and $X^{[n,\nu]}$ therein with $f^\gamma$ and $X^{[n,\nu],\gamma}$, respectively) and the optimality of $\theta^\gamma_*=\theta^\gamma_*(t, x; n,\nu)$ for the stopper we have
\begin{align*}
u^\gamma(t,x) \le&\, \E_x\Big[e^{-r(\theta^\gamma_*\wedge\theta_*)}u^\gamma\big(t+\theta_*^\gamma\wedge\theta_*,X_{\theta_*^\gamma\wedge\theta_*-}^{[n,\nu],\gamma}\big)\!+\!\int_{0}^{\theta_*^\gamma\wedge\theta_*}\!\!\!e^{-rs}h(t+s,X_s^{[n,\nu],\gamma})\ud s \notag\\
&\qquad+\int_{[0,\theta_*^\gamma\wedge\theta_*)}\!\!e^{-rs}f(t+s)\,\ud \nu_s\Big].
\end{align*}
Recall that $|u^\gamma- v|(t,x) \le C \gamma$ by Theorem \ref{thm:convugam} and $v$ is Lipschitz in the space variable with constant $f(0)$, by Corollary \ref{cor:5.2}. By assumption, $h$ is Lipschitz with the constant $K$, see \eqref{eq:ghsmooth}. Using those properties in the above estimate for $u^\gamma$, we obtain
\begin{equation}\label{eqn:u_gamma_est}
\begin{aligned}
u^\gamma(t,x) \le&\,C\gamma\!+\!\big(f(0)\!+\!TK\big)\E_x\Big[\sup_{s\in[0,T-t]}\big|X^{[n,\nu],\gamma}_s\!-\!X^{[n,\nu]}_s\big|_d\Big]\\
&+\! \E_x\Big[e^{-r(\theta^\gamma_*\wedge\theta_*)}v\big(t+\theta_*^\gamma\wedge\theta_*,X_{\theta_*^\gamma\wedge\theta_*-}^{[n,\nu]}\big)\!+\!\int_{0}^{\theta_*^\gamma\wedge\theta_*}\!\!\!\!e^{-rs}h(t+s,X_s^{[n,\nu]})\ud s\\
&\quad\qquad\!+\!\int_{[0,\theta_*^\gamma\wedge\theta_*)}\!\!\!e^{-rs}f(t\!+\!s)\,\ud \nu_s\Big].
\end{aligned}
\end{equation}
We now let $(\gamma_k)$ be the sequence converging to zero from Lemma \ref{lem:convth} so that $\liminf_{k \to \infty} \theta_*^{\gamma_k}\wedge\theta_* =\theta_*$. In combination with an obvious bound $\limsup_{k \to \infty} \theta_*^{\gamma_k}\wedge\theta_* \le \theta_*$, this yields the limit $\lim_{k \to \infty} \theta_*^{\gamma_k}\wedge\theta_* = \theta_*$. 
Since the mappings 
\[
s\mapsto X^{[n,\nu]}_{s-}\quad\text{and}\quad s\mapsto \int_{[0,s)}e^{-ru}f(t+u)\ud \nu_u
\]
are left-continuous $\P_x$-a.s.\ and $\theta_*^{\gamma_k}\wedge\theta_*$ converges to $\theta_*$ from below (although not strictly from below), we can conclude that for a.e.\ $\omega\in\Omega$
\begin{align*}
\lim_{k\to \infty}X^{[n,\nu]}_{\theta_*^{\gamma_k}\wedge\theta_*-}=X^{[n,\nu]}_{\theta_*-}\quad\text{and}\quad\lim_{k \to \infty}\int_{[0,\theta_*^{\gamma_k}\wedge\theta_*)}e^{-rs}f(t+s)\ud \nu_s=\int_{[0,\theta_*)}e^{-rs}f(t+s)\ud \nu_s.
\end{align*}
The boundedness of $g, h, f$ and $v$ (cf.\ Corollary \ref{cor:5.2}), and $\E_x[\nu_{T-t}] < \infty$ allow us to use the dominated convergence theorem in \eqref{eqn:u_gamma_est} to obtain
\begin{align}\label{eq:vup}
v(t,x)\le \E_x\Big[e^{-r\theta_*}v\big(t+\theta_*,X_{\theta_*-}^{[n,\nu]}\big)\!+\!\int_{0}^{\theta_*}\!\!\!e^{-rs}h(t+s,X_s^{[n,\nu]})\ud s +\int_{[0,\theta_*)}\!\!e^{-rs}f(t+s)\,\ud \nu_s\Big],
\end{align}
where we also used Proposition \ref{prop:Lipschitzproc} to see that the second term of \eqref{eqn:u_gamma_est} converges to $0$.

On the event $\{\sigma_*<\tau_*\}$ it holds 
\[
v\big(t+\theta_*,X_{\theta_*-}^{[n,\nu]}\big) = g\big(t+\sigma_*,X_{\sigma_*-}^{[n,\nu]}\big) \le g\big(t+\sigma_*,X_{\sigma_*}^{[n,\nu]}\big)+f(t+\sigma_*)\Delta\nu_{\sigma_*},
\]
since $|\nabla g(t+s, \cdot)|_d \le f(t+s)$.
On the event $\{\sigma_*\ge \tau_*\}$, the bound $|\nabla v(t+s, \cdot)|_d \le f(t+s)$ yields
\[
v\big(t+\theta_*,X_{\theta_*-}^{[n,\nu]}\big)\le v\big(t+\tau_*,X_{\tau_*}^{[n,\nu]}\big)+f(t+\tau_*)\Delta\nu_{\tau_*} = 
g\big(t+\tau_*,X_{\tau_*}^{[n,\nu]}\big)+f(t+\tau_*)\Delta\nu_{\tau_*}.
\]
Substituting the above bounds in the right-hand side of \eqref{eq:vup} yields
\[
v(t,x)\le \cJ_{t,x}(n,\nu,\theta_*).
\]
By the arbitrariness of $(n,\nu)\in\cA_{t}$ we deduce optimality of $\theta_*$.
\end{proof}

\subsection{Relaxing Assumption \ref{ass:gen3} into Assumption \ref{ass:gen2}}\label{sec:ass3.2}
Despite a different setting of the game, the arguments from Section 4.2 in \cite{bovo2023degcontrols} can be repeated verbatim. We will only provide main ideas and refer the reader to detailed proofs in the aforementioned paper. The $d$-dimensional {\em open} ball centred in $0$ with radius $k$ is denoted by $B_k$. We approximate functions $f$, $g$, $h$ (satisfying Assumption \ref{ass:gen2}) with smooth bounded functions $f^{j,k}_m$, $g^{j,k}_m$ and $h^{j,k}_m$ for $j,k,m\in\N$. The index  $m$ corresponds to the truncation of the function values by $m$ (i.e., $f_m=f\wedge m$, $g_m=g\wedge m$, $h_m=h\wedge m$; recall that $f,g,h$ are non-negative), the index $j$ corresponds to the mollification by convolution with a suitable mollifier $\zeta_j$, and the index $k$ refers to the support (the functions are forced to be zero outside $B_k$ by multiplication with a cut-off function). The approximating functions are such that $(f^{j,k}_m,g^{j,k}_m,h^{j,k}_m)\to(f,g,h)$ uniformly over compact subsets of $\R^{d+1}_{0,T}$, as $j\to\infty$, $k\to\infty$ and $m\to\infty$ in this order. For each treble $(f^{j,k}_m,g^{j,k}_m,h^{j,k}_m)$ we define an approximating game which has a value $v^{j,k}_m$ by Theorem \ref{thm:convugam}.

When passing to the limit in $v^{j,k}_m$ as $j\to\infty$, $k\to\infty$ and $m\to\infty$ in this order, we use two main ingredients: the strict sub-linear growth of $g$ and $h$ and the estimates from Lemma \ref{lem:nubnd} and Lemma \ref{lem:stabL1res}. In particular, we use that for any $(n,\nu)\in\cA_{t,x}^{opt}$ and any stopping time $\theta\in \cT_t$
\begin{align}
\P_x(X_\theta^{[n,\nu]} \notin B_k)\leq \frac{1}{k}\E_x\big[|X_\theta^{[n,\nu]}|_d\big]\leq \frac{K_4(1+|x|_d)}{k}.
\end{align}

Set
\[
\overline v_\infty\coloneqq \limsup_{m\to\infty}\limsup_{k\to\infty}\limsup_{j\to\infty}v^{j,k}_m,
\qquad\text{and}\qquad
\underline v_\infty\coloneqq \liminf_{m\to\infty}\liminf_{k\to\infty}\liminf_{j\to\infty}v^{j,k}_m.
\]
\begin{proposition}\label{prop:v_infty}
Let Assumptions \ref{ass:gen1} and \ref{ass:gen2} hold. For any $(t,x)\in\R^{d+1}_{0,T}$ we have
\begin{align*}
\underline v_\infty(t,x)=\overline v_\infty(t,x)=\underline{v}(t,x)=\overline{v}(t,x),
\end{align*}
hence the value $v$ of the game \eqref{eq:valuegame} exists. Moreover, $v^{j,k}_m$ converge to $v$ uniformly on compact subsets of $\R^{d+1}_{0,T}$ and  $|\nabla v(t,x)|_d\leq f(t)$ for a.e. $(t,x)\in\R^{d+1}_{0,T}$. 
\end{proposition}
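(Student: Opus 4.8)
\textbf{Proof plan for Proposition \ref{prop:v_infty}.}

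The plan is to transfer the conclusions already available for the smooth, compactly supported case (Theorems \ref{thm:convugam} and \ref{thm:opttaustar} applied to each treble $(f^{j,k}_m, g^{j,k}_m, h^{j,k}_m)$) to the general case via a threefold limiting argument, sandwiching the iterated limits $\underline v_\infty$ and $\overline v_\infty$ between $\underline v$ and $\overline v$. First I would fix $(t,x)\in\R^{d+1}_{0,T}$ and recall that, by construction, $f^{j,k}_m \le f$, $g^{j,k}_m \le g$, $h^{j,k}_m \le h$ pointwise, and that the trebles converge to $(f,g,h)$ uniformly on compact subsets of $\R^{d+1}_{0,T}$ as $j\to\infty$, then $k\to\infty$, then $m\to\infty$. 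The key estimate is the uniform tail bound: by Lemma \ref{lem:nubnd} we may restrict the controller to $\cA^{opt}_{t,x}$, and then Lemma \ref{lem:stabL1res} (with $\gamma=0$) gives $\E_x[|X^{[n,\nu]}_\theta|_d] \le K_4(1+|x|_d)$, hence $\P_x(X^{[n,\nu]}_\theta\notin B_k)\le K_4(1+|x|_d)/k$ for every $(n,\nu)\in\cA^{opt}_{t,x}$ and $\theta\in\cT_t$, uniformly. Combined with the strict sub-linear growth $g(t,x)+h(t,x)\le K_1(1+|x|_d^\beta)$, $\beta<1$, from Assumption \ref{ass:gen2}(ii), this controls the contribution to the payoff from the region $\{X\notin B_k\}$ where $g^{j,k}_m$ and $h^{j,k}_m$ differ most from $g,h$.

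The core of the argument is to bound the difference $|\cJ_{t,x}(n,\nu,\tau) - \cJ^{j,k}_{m,t,x}(n,\nu,\tau)|$ uniformly over $(n,\nu)\in\cA^{opt}_{t,x}$ and $\tau\in\cT_t$, where $\cJ^{j,k}_m$ denotes the payoff built from the approximating treble. One splits this difference into (a) the discrepancy on the compact set $[0,T]\times \overline{B_k}$, which is handled by uniform convergence of $(f^{j,k}_m,g^{j,k}_m,h^{j,k}_m)$ to $(f,g,h)$ on compacts as $j\to\infty$; (b) the truncation/support error outside $B_k$, bounded using $0\le g^{j,k}_m\le g$, $0\le h^{j,k}_m\le h$, together with $\E_x[(g+h)(t+\theta,X^{[n,\nu]}_\theta)\ind_{\{X^{[n,\nu]}_\theta\notin B_k\}}]$, which tends to zero uniformly in the admissible controls as $k\to\infty$ by Hölder/Markov using $\beta<1$ and the $L^1$-bound (so $|X|^\beta$ is uniformly integrable over the class); and (c) the control-cost term, where $f^{j,k}_m\le f$ and $\E_x[\nu_{T-t}]\le K_2(1+|x|_d)$ give a uniform bound, and monotone/dominated convergence $f^{j,k}_m\uparrow f$ (appropriately) closes the gap. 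Since $\cA^{opt}_{t,x}$ and $\cT_t$ are the same for every treble, taking $\sup_\tau\inf_{(n,\nu)}$ and $\inf_{(n,\nu)}\sup_\tau$ of these uniformly small differences shows $\underline v^{j,k}_m \to \underline v$ and $\overline v^{j,k}_m \to \overline v$ along the iterated limits; but $v^{j,k}_m = \underline v^{j,k}_m = \overline v^{j,k}_m$ by Theorem \ref{thm:convugam}, so $\underline v_\infty = \overline v_\infty = \underline v = \overline v$, giving existence of the value and uniform convergence on compacts. Finally, the gradient bound $|\nabla v^{j,k}_m|_d \le f^{j,k}_m \le f$ from Corollary \ref{cor:5.2} passes to the limit: uniform convergence on compacts of $v^{j,k}_m$ to $v$ and $f^{j,k}_m$ to $f$ implies $|\nabla v(t,x)|_d \le f(t)$ for a.e.\ $(t,x)$, e.g.\ by testing against $C^\infty_{c,{\rm sp}}$ functions and using weak-$\ast$ compactness of the gradients in $L^\infty_{\ell oc}$.

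The main obstacle I expect is step (b): obtaining the tail estimate \emph{uniformly over the whole admissible class} $\cA^{opt}_{t,x}$ and over all stopping times $\tau\in\cT_t$ simultaneously, not just for a fixed control. This is exactly where the sub-linear growth exponent $\beta<1$ is essential — it allows one to interpolate $\E_x[|X^{[n,\nu]}_\theta|_d^\beta \ind_{\{|X^{[n,\nu]}_\theta|_d>k\}}]$ against the uniform $L^1$-bound of Lemma \ref{lem:stabL1res} and obtain a bound of order $k^{\beta-1}(1+|x|_d)$ that vanishes as $k\to\infty$ independently of $(n,\nu)$, something that would fail for $\beta=1$. The delicate bookkeeping is to make sure the errors in (a), (b), (c) are controlled in the correct order ($j\to\infty$ first on a fixed compact $\overline{B_k}$, then $k\to\infty$ to kill the tail, then $m\to\infty$ for the truncation), and that no step secretly requires a bound on $\E_x[\nu_{T-t}^2]$, which is unavailable here; since all three error terms only ever use the first moment of $\nu$ and the first moment of $X$, this is consistent, but it must be checked carefully. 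The remainder of the proof is routine and parallels Section 4.2 of \cite{bovo2023degcontrols}.
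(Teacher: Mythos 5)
Your proposal is correct and follows essentially the same route as the paper, which itself only sketches the argument by reference to \cite[Lem.\ 4.7]{bovo2023degcontrols}: the same threefold ordered limit $j\to\infty$, $k\to\infty$, $m\to\infty$, the same two key ingredients (the uniform tail bound $\P_x(X^{[n,\nu]}_\theta\notin B_k)\le K_4(1+|x|_d)/k$ from Lemmas \ref{lem:nubnd} and \ref{lem:stabL1res} combined with the strict sub-linear growth $\beta<1$ to kill the truncation error uniformly over $\cA^{opt}_{t,x}$ and $\cT_t$), and the same passage to the limit of the gradient bound $|\nabla v^{j,k}_m|_d\le f$. The only cosmetic inaccuracy is the claim that $g^{j,k}_m\le g$ and $h^{j,k}_m\le h$ pointwise (mollification need not preserve this), but your argument never actually relies on it: inside $B_k$ you invoke uniform convergence and outside $B_k$ the approximants vanish while $g,h\ge0$ are controlled by the tail estimate.
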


The proof is analogous to the one of \cite[Lem.\ 4.7]{bovo2023degcontrols}, with $d=d_0$ therein. We emphasise that the strict sublinear growth of $g,h$ (cf.\ \eqref{eq:ghbeta}) is needed in this proof. Finally, the gradient bound can be deduced by the fact that $|\nabla v^{j,k}_m|_d\leq f$ for all $j,k,m$.

It remains to prove the optimality of $\theta_*$ under Assumptions \ref{ass:gen1} and \ref{ass:gen2}. The nature of arguments is similar as in the proof of Lemma \ref{lem:convth} and Theorem \ref{thm:opttaustar} with details to be found in \cite[Lem.\ 4.10]{bovo2023degcontrols}. The reasoning goes along the following lines. For any treble $j,k,m$ the stopping time $\theta^{j,k,m}_*=\tau^{j,k,m}_*\wedge\sigma^{j,k,m}_*$ is optimal for the stopper in the game $v^{j,k}_m$ (c.f. \eqref{eq:taustar} with $v$ and $g$ replaced by $v^{j,k}_m$ and $g^{j,k}_m$, respectively). Similarly as in Lemma \ref{lem:convth}, using uniform convergence of $v^{j,k}_m$ to $v$, we show that for any $(t,x) \in \RdT$ and $(n,\nu)\in\cA_t$
\[
\liminf_{m\to\infty}\liminf_{k\to\infty}\liminf_{j\to\infty}\theta^{j,k,m}_*(t,x; n,\nu)\ge \theta_*(t,x; n,\nu),\quad\P_x-a.s.
\]
Arguments as in Theorem \ref{thm:opttaustar} lead to the conclusion stated formally in the next proposition.

\begin{proposition}\label{prop:opt}
Let Assumptions \ref{ass:gen1} and \ref{ass:gen2} hold. For any $(t,x)\in\R^{d+1}_{0,T}$, we have
\[
v(t,x)=\inf_{(n,\nu)\in\cA_t}\cJ_{t,x}\big(n,\nu,\theta_*(t, x; n,\nu)\big),
\]
hence $\theta_*=\theta_*(t,x;n,\nu)$ is optimal for the stopper in the game with value $v$.
\end{proposition}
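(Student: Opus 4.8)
The plan is to mimic the structure used for the approximation in $\gamma$ (Lemma~\ref{lem:convth} and Theorem~\ref{thm:opttaustar}), but now with the triple-indexed family $(f^{j,k}_m, g^{j,k}_m, h^{j,k}_m)$ replacing the single-parameter perturbation. First I would record that, for each treble $(j,k,m)$, the game with data $(f^{j,k}_m, g^{j,k}_m, h^{j,k}_m)$ satisfies Assumption~\ref{ass:gen3} (the functions are smooth, bounded, compactly supported in space, with $f^{j,k}_m$ non-increasing and $|\nabla g^{j,k}_m|_d \le f^{j,k}_m$ by construction), so Theorem~\ref{thm:opttaustar} applies and gives that $\theta^{j,k,m}_* = \tau^{j,k,m}_* \wedge \sigma^{j,k,m}_*$ is optimal for the stopper in the game with value $v^{j,k}_m$, where $\tau^{j,k,m}_*$ and $\sigma^{j,k,m}_*$ are the hitting times of $\{v^{j,k}_m = g^{j,k}_m\}$ by the undisturbed process $X^{[n,\nu]}$ and its left limits, respectively. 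Crucially, the underlying diffusion $X^{[n,\nu]}$ is the \emph{same} for every $(j,k,m)$, so no pathwise convergence of processes is needed here --- only convergence of the obstacle functions, which is uniform on compacts by Proposition~\ref{prop:v_infty} together with the uniform-on-compacts convergence of $g^{j,k}_m \to g$.

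The second step is the $\liminf$ statement for the stopping times. Fixing $(t,x)$ and $(n,\nu)\in\cA_t$, on $\{\theta_*>0\}$ and for $\delta<\theta_*(\omega)$ one has $\min\{(v-g)(t+s,X_s^{[n,\nu]}(\omega)),(v-g)(t+s,X_{s-}^{[n,\nu]}(\omega))\}>0$ for $s\in[0,\delta]$; by right-continuity with left limits of $s\mapsto (v-g)(t+s,X_s^{[n,\nu]})$ (continuity of $v-g$ from Proposition~\ref{prop:v_infty}) this infimum is bounded below by some $\lambda_{\delta,\omega}>0$. Uniform-on-compacts convergence $v^{j,k}_m\to v$ and $g^{j,k}_m\to g$, applied along the (bounded, hence relatively compact) trajectory $\{X_s^{[n,\nu]}(\omega):s\in[0,\delta]\}$ and its left-limit closure, gives that $\inf_{0\le s\le\delta}\min\{(v^{j,k}_m-g^{j,k}_m)(t+s,X_s^{[n,\nu]}),(v^{j,k}_m-g^{j,k}_m)(t+s,X_{s-}^{[n,\nu]})\}\ge \lambda_{\delta,\omega}/2$ for $j,k,m$ large (in the order $j\to\infty$, then $k$, then $m$), whence $\theta^{j,k,m}_*(\omega)\ge\delta$ along that iterated limit; letting $\delta\uparrow\theta_*(\omega)$ yields the claimed $\liminf$ inequality $\P_x$-a.s.

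Third, I would run the verification argument exactly as in Theorem~\ref{thm:opttaustar}. Start from the verification inequality for $u$-type value functions applied to $v^{j,k}_m$ and the optimal stopping time $\theta^{j,k,m}_*\wedge\theta_*$: this gives
\[
v^{j,k}_m(t,x)\le \E_x\Big[e^{-r(\theta^{j,k,m}_*\wedge\theta_*)}v^{j,k}_m\big(t+\theta^{j,k,m}_*\wedge\theta_*,X^{[n,\nu]}_{\theta^{j,k,m}_*\wedge\theta_*-}\big)+\!\int_0^{\theta^{j,k,m}_*\wedge\theta_*}\!\!\!e^{-rs}h^{j,k}_m\ud s+\!\int_{[0,\theta^{j,k,m}_*\wedge\theta_*)}\!\!\!e^{-rs}f^{j,k}_m\ud\nu_s\Big].
\]
Since $\theta^{j,k,m}_*\wedge\theta_*\to\theta_*$ from below along the iterated limit (combining the $\liminf$ bound with the trivial $\limsup\le\theta_*$), left-continuity of $s\mapsto X^{[n,\nu]}_{s-}$ and of $s\mapsto\int_{[0,s)}e^{-ru}f(t+u)\ud\nu_u$ gives convergence of the integrands, and uniform-on-compacts convergence of $v^{j,k}_m,g^{j,k}_m,h^{j,k}_m,f^{j,k}_m$ together with the uniform $L^1$-bound $\E_x[|X^{[n,\nu]}_\theta|_d]\le K_4(1+|x|_d)$ from Lemma~\ref{lem:stabL1res} (to handle the exit-from-$B_k$ term) and $\E_x[\nu_{T-t}]<\infty$ let dominated convergence pass to the limit, yielding $v(t,x)\le \E_x[e^{-r\theta_*}v(t+\theta_*,X^{[n,\nu]}_{\theta_*-})+\int_0^{\theta_*}e^{-rs}h\ud s+\int_{[0,\theta_*)}e^{-rs}f\ud\nu_s]$. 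Finally, splitting on $\{\sigma_*<\tau_*\}$ and $\{\sigma_*\ge\tau_*\}$ and using $|\nabla g|_d\le f$, $|\nabla v|_d\le f$ as in Theorem~\ref{thm:opttaustar} bounds the right-hand side by $\cJ_{t,x}(n,\nu,\theta_*)$; arbitrariness of $(n,\nu)$ gives the reverse inequality $v(t,x)\le\inf_{(n,\nu)}\cJ_{t,x}(n,\nu,\theta_*)$, and since $\theta_*\in\cT_t$ always gives at most $\sup_\tau\inf_{(n,\nu)}\cJ = \underline v = v$, equality follows.

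The main obstacle I anticipate is the uniform-on-compacts control needed in the second step: unlike the $\gamma$-approximation, here there is no quantitative rate, so one must be careful that the convergence $v^{j,k}_m\to v$ and $g^{j,k}_m\to g$ is genuinely uniform on the (random but a.s.\ bounded) spatial range of the trajectory over $[0,\delta]$, and that the iterated order of limits $j\to\infty$, then $k$, then $m$ is respected throughout --- in particular the cut-off index $k$ only forces agreement inside $B_k$, so one needs the a.s.\ boundedness of the path (which follows from $\E_x[\sup_{s}|X^{[n,\nu]}_s|_d]<\infty$, a consequence of Lemma~\ref{lem:stabL1res}-type estimates together with $\E_x[\nu_{T-t}^2]<\infty$ for $(n,\nu)\in\cA_t$) before taking $k\to\infty$. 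Everything else is a faithful transcription of the $\gamma\to0$ arguments, which is why the paper is content to cite \cite[Lem.\ 4.10]{bovo2023degcontrols}.
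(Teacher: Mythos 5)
Your proposal follows essentially the same route as the paper: optimality of $\theta^{j,k,m}_*$ for each approximating game, a $\liminf$ bound on the stopping times obtained exactly as in Lemma \ref{lem:convth} but driven by the uniform-on-compacts convergence of $(v^{j,k}_m,g^{j,k}_m)$ rather than by pathwise convergence of the state process, and then the verification/limit-passage argument of Theorem \ref{thm:opttaustar}; this is precisely the sketch the paper gives, with details deferred to \cite[Lem.~4.10]{bovo2023degcontrols}. The only point to phrase more carefully is your ``verification inequality applied to $v^{j,k}_m$'': since $v^{j,k}_m$ is the value of a \emph{degenerate} game it does not directly satisfy a variational inequality, so that intermediate supermartingale-type estimate at $\theta^{j,k,m}_*\wedge\theta_*$ must itself be obtained by one more pass through the $\gamma$-approximation (i.e.\ by running the proof of Theorem \ref{thm:opttaustar} with the stopping time $\theta^{j,k,m}_*\wedge\theta_*$ in place of $\theta_*$), exactly as inequality \eqref{eq:vup} is derived there.
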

We emphasise that the dependence of $\theta_*$ on $t$, $x$, $n$, $\nu$ is only through the dynamics of the process $X^{[n,\nu]}$, i.e., the stopper does not need to know the controls applied by the controller in order to execute their strategy.

We now have all the ingredients needed to prove Theorem \ref{thm:usolvar}.
\begin{proof}[\bf Proof of Theorem \ref{thm:usolvar}]
The existence of the value function $v$ and the gradient bound follow from Proposition \ref{prop:v_infty}. The continuity of $v$ is guaranteed by the continuity of the approximating functions $v^{j,k}_m$ and their uniform convergence on compact sets. Optimality of the stopping time $\theta_*$ is shown in Proposition \ref{prop:opt}. The sub-linear growth of $v$ is easily deduced upon observing that $v\ge 0$ and 
\begin{align*}
v(t,x)\le&\sup_{\tau\in\cT_t} \E_{x}\Big[e^{-r\tau}g(t\!+\!\tau,X_\tau^{[e_1,0]})\!+\!\int_0^{\tau}\! e^{-rs}h(t\!+\!s,X_s^{[e_1,0]})\,\ud s\Big]
\\
\le &\,K_1(1+T)\E_{x}\Big[\big(1+\sup_{0\le s\le T-t}\big|X_s^{[e_1,0]}\big|_d^\beta\big)\Big]\le c\big(1+|x|_d^\beta\big),
\end{align*}  
for a suitable $c>0$, where the second inequality is due to \eqref{eq:ghbeta} and the final one is due to standard estimates for uncontrolled SDEs (e.g., \cite[Cor.\ 2.5.12]{krylov1980controlled}).
\end{proof}

\section{Relaxation of growth conditions on \texorpdfstring{$g$ and $h$}{g and h}}\label{sec:ext}
So far, we studied the properties of the game under Assumption \ref{ass:gen2} on the payoff functions $g,h$ and the cost $f$. The main requirement was the continuity and the strict sub-linear growth of $g$ and $h$  (see \eqref{eq:ghbeta}), the latter being instrumental in our arguments to pass to the limit in the approximation procedure used in Proposition \ref{prop:v_infty}.

In this section we obtain an analogue of Theorem \ref{thm:usolvar} under different assumptions on $g$ and $h$. Imposing more smoothness, we can allow for $g$ to have a linear growth and $h$ to have a quadratic growth. In order to state new assumptions, we recall the H\"older space $C^{0,1,\alpha}_{\ell oc}(\RdT)$ of functions which along with their first-order spatial derivatives are $\alpha$-H\"older continuous on any compact subset of $\RdT$. Analogously, we also introduce the space $C^{1,2,\alpha}_{\ell oc}(\RdT)\subset C^{0,1,\alpha}_{\ell oc}(\RdT)$ of functions whose time derivative and the second order spatial derivatives are also $\alpha$-H\"older continuous (for details see \cite[Sec.\ 2]{bovo2022variational} and \cite[Ch.\ 3, Sec.\ 2]{friedman2008partial}).

Throughout this section we make the following assumption.
\begin{assumption}\label{ass:gen4}
The functions $f:[0,T]\to(0,\infty)$, $g,h:\RdT\to[0,\infty)$ are such that
\begin{itemize}
\item[(i)] $g\in C^{1,2,\alpha}_{\ell oc}(\RdT)$ and $h\in C^{0,1,\alpha}_{\ell oc}(\RdT)$ for some $\alpha\in(0,1)$;
\item[(ii)] $f$ is non-increasing, positive and $f^2$ is continuously differentiable on $[0,T]$;
\item[(iii)] for all $(t,x)\in\RdT$ 
\begin{align}\label{eq:g_Lip_f2}
|\nabla g(t,x)|_{d}\leq f(t).
\end{align}
\end{itemize}
Moreover, there is $K_5>0$ such that the following hold
\begin{itemize}
\item[(iv)] for all $(t,x)\in\RdT$
\begin{align}\label{eq:hgrowth}
h(t,x)\leq K_5(1+|x|_d^2);
\end{align}
\item[(v)] there exists $\beta\in(0,1)$ such that
\begin{align}\label{eqn:Lipch_h}
|h(t,x)-h(t,y)|\leq K_5(1+|x|_d+|y|_d)^\beta|x-y|_d,\quad\text{for all $t\in[0,T]$ and $x,y\in\R^{d}$;}
\end{align}
\item[(vi)] for all $0\leq s<t\leq T$ and $x\in \RdT$
\begin{align}\label{eq:ghsmooth2}
g(t,x)-g(s,x)\leq K_5(t-s)\quad\text{and}\quad h(t,x)-h(s,x)\leq K_5(t-s);
\end{align}
\item[(vii)] for all $(t,x)\in\R^{d+1}_{0,T}$
\[
(h+\partial_tg+\cL g-rg)(t,x)\geq -K_5.
\]
\end{itemize}
\end{assumption}
Notice that the continuity of $g$ and \eqref{eq:g_Lip_f2} imply
\begin{align*}
0\le g(t,x)\leq K_5(1+|x|_d),
\end{align*}
where there is no loss of generality in assuming the same constant $K_5$ as in the rest of Assumption \ref{ass:gen4}. We also notice that (iv) is redundant because it is implied by (v). Nevertheless, we leave it as stated for clarity of exposition below (in particular, it allows us to draw clear parallels to results in \cite{bovo2022variational}). Condition (vii) ensures that there is no region in the state space to which the controller (minimiser) could push the process in order to obtain arbitrarily large (negative) running gains.

Notice that Assumptions \ref{ass:gen1} and \ref{ass:gen4} imply \cite[Ass.\ 3.1 and 3.2]{bovo2022variational} and so Theorem \ref{thm:from3to5} holds with the growth bound for the value function replaced by (see also \cite[Thm.\ 3.3]{bovo2022variational})
\[
0 \le u(t,x) \le c (1 + |x|_d^2), \qquad (t,x) \in \RdT
\]
for some $c>0$. Due to relaxed growth conditions on $g, h$, the linear growth estimates from previous sections are replaced by quadratic growth. Indeed, Lemma \ref{lem:L1bnd} holds with $\cA_{t,x}^{opt}$ therein replaced with 
\begin{equation}\label{eqn:new_cA}
\cA_{t,x}^{opt}\coloneqq\big\{(n,\nu)\in\cA_t:\, \E_x[\nu_{T-t}]\leq K_2(1+|x|_d^2)\big\}
\end{equation}
due to \eqref{eq:hgrowth} (cf.~the proof of \cite[Lem.~3.1]{bovo2023degcontrols}). This amended definition of $\cA^{opt}_{t,x}$ means that Lemma \ref{lem:stabL1res} is valid with condition \eqref{eq:XgamK4} replaced by 
\begin{equation}\label{eqn:new_growth}
\E_x\big[|X_{\tau}^{[n,\nu],\gamma}|\big]\leq K_4(1+|x|_d^2), \qquad \tau \in \cT_0.
\end{equation}
Lemma \ref{lem:theta*opt} and Proposition \ref{prop:Lipschitzproc} hold without changes.

\begin{theorem}\label{thm:convugam_1}
The pointwise limit $u\coloneqq\lim_{\gamma\to0}u^\gamma$ exists on $\RdT$. Moreover, $u$ coincides with the value of the game with payoff \eqref{eq:payoff}, i.e., $u=\underline{v}=\overline{v}=v$, and there exists $C>0$ such that
\begin{align*}
|u^\gamma(t,x)-v(t,x)|\leq C (1 + |x|_d^2)^\beta\gamma \quad\text{for all $(t,x)\in\RdT$.}
\end{align*}
\end{theorem}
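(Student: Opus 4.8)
The plan is to mimic the proof of Theorem \ref{thm:convugam} almost verbatim, with the only difference being that the Lipschitz-type bounds on $g$ and $h$ are replaced by the local Lipschitz estimate \eqref{eqn:Lipch_h} and, correspondingly, the uniform $L^1$-bound $K_3\gamma$ on $\E_x[\sup_s|X^{[n,\nu],\gamma}_s - X^{[n,\nu]}_s|_d]$ is combined with a uniform (quadratic) $L^p$-bound on the processes themselves. First I would set $\underline u := \liminf_{\gamma\to 0} u^\gamma$, $\overline u := \limsup_{\gamma\to 0}u^\gamma$ and aim to show $\overline u(t,x)\le\underline v(t,x)$ and $\underline u(t,x)\ge\overline v(t,x)$, which forces $\underline u=\overline u=\underline v=\overline v=v$. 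Fixing $(t,x)$ and $\eta>0$, and using Lemma \ref{lem:L1bnd} (with the amended $\cA^{opt}_{t,x}$ from \eqref{eqn:new_cA}) one reduces, exactly as in Theorem \ref{thm:convugam}, to comparing $\cJ^\gamma_{t,x}(n,\nu,\tau)$ with $\cJ_{t,x}(n,\nu,\tau)$ for a single pair $(n,\nu)\in\cA^{opt}_{t,x}$ and a single stopping time $\tau\in\cT_t$.

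The core estimate replaces the third display of the proof of Theorem \ref{thm:convugam}. Writing the difference of the payoffs as the expectation of the terminal term $e^{-r\tau}\big(g(t+\tau,X^{[n,\nu],\gamma}_\tau)-g(t+\tau,X^{[n,\nu]}_\tau)\big)$ plus the running term $\int_0^\tau e^{-rs}\big(h(t+s,X^{[n,\nu],\gamma}_s)-h(t+s,X^{[n,\nu]}_s)\big)\ud s$, I would bound the $g$-increment by $f(0)|X^{[n,\nu],\gamma}_\tau - X^{[n,\nu]}_\tau|_d$ using \eqref{eq:g_Lip_f2}, and the $h$-increment pointwise by $K_5(1+|X^{[n,\nu],\gamma}_s|_d+|X^{[n,\nu]}_s|_d)^\beta |X^{[n,\nu],\gamma}_s - X^{[n,\nu]}_s|_d$ using \eqref{eqn:Lipch_h}. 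Then Hölder's inequality with exponents $1/\beta$ and $1/(1-\beta)$ gives, after taking suprema over $s\in[0,T-t]$,
\begin{align*}
\big|\cJ^\gamma_{t,x}(n,\nu,\tau)-\cJ_{t,x}(n,\nu,\tau)\big|
\le&\, c\,\E_x\Big[\big(1+\sup_{s}|X^{[n,\nu],\gamma}_s|_d+\sup_s|X^{[n,\nu]}_s|_d\big)\Big]^{\beta}\\
&\times \E_x\Big[\sup_{s\in[0,T-t]}\big|X^{[n,\nu],\gamma}_s - X^{[n,\nu]}_s\big|_d^{1/(1-\beta)}\Big]^{1-\beta} + c\,\gamma,
\end{align*}
for some constant $c>0$ absorbing $f(0)$, $K_5$ and $T$. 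The second factor is $\le \big(K_3\gamma^{1/(1-\beta)}\big)^{1-\beta}=K_3^{1-\beta}\gamma$ by Proposition \ref{prop:Lipschitzproc} applied with $p=1/(1-\beta)$, and the first factor is $\le c'(1+|x|_d^2)^\beta$ by \eqref{eqn:new_growth} (noting that $\tau$ played no role there, and $\sup_s|X^{[n,\nu]}_s|_d$ is dominated in the same way by letting $\gamma=0$). Hence the whole difference is bounded by $C(1+|x|_d^2)^\beta\gamma$ with $C$ independent of $(n,\nu)\in\cA^{opt}_{t,x}$, $\tau$ and $\gamma$.

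Plugging this into the two chains of inequalities (as in \eqref{eqn:th44a} and \eqref{eqn:th44b}) gives $u^\gamma(t,x)-\overline v(t,x)\ge -C(1+|x|_d^2)^\beta\gamma - 2\eta$ and $u^\gamma(t,x)-\underline v(t,x)\le C(1+|x|_d^2)^\beta\gamma + 2\eta$; letting $\gamma\to 0$ and then $\eta\to 0$ yields $\underline u\ge\overline v$ and $\overline u\le\underline v$, hence the existence of the value and $u=v$, while keeping $\eta$ aside and taking $\gamma$ small gives the stated rate $|u^\gamma(t,x)-v(t,x)|\le C(1+|x|_d^2)^\beta\gamma$. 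The only genuinely delicate point — the main obstacle — is the uniformity of the constant in $(n,\nu)$: this is exactly why one restricts to $\cA^{opt}_{t,x}$ (so that \eqref{eqn:new_growth} applies with a $\gamma$-independent $K_4$) and why Proposition \ref{prop:Lipschitzproc} is stated with a constant $K_3$ independent of the control; one should also double-check that $\sup_s|X^{[n,\nu]}_s|_d$ enjoys the same quadratic $L^1$-bound, which follows by taking $\gamma=0$ in Lemma \ref{lem:stabL1res}. Everything else is a routine repetition of the argument in Theorem \ref{thm:convugam}.
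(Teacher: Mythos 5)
Your argument is essentially the paper's proof: the same $\liminf/\limsup$ framework with $\eta$-optimal controls and stopping times, the same use of the local Lipschitz bound \eqref{eqn:Lipch_h} split by H\"older's inequality with exponents $1/\beta$ and $1/(1-\beta)$, with Proposition \ref{prop:Lipschitzproc} (applied with $p=1/(1-\beta)$) controlling one factor and the quadratic moment bound \eqref{eqn:new_growth} the other. The one deviation worth fixing: you pull $\sup_{s}$ inside the expectations before applying H\"older, which requires $\E_x[\sup_s |X^{[n,\nu],\gamma}_s|_d]\le c(1+|x|_d^2)$ --- marginally stronger than \eqref{eqn:new_growth}, which is stated only at stopping times and, under Assumption \ref{ass:gen1}(i.a), is proved only in that form; the paper avoids this by keeping the $\ud s$-integral outside and applying H\"older and \eqref{eqn:new_growth} at each fixed time $s$, and your estimate goes through verbatim if you do the same.
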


\begin{proof}
The proof is similar to the one of Theorem \ref{thm:convugam}; we mainly emphasise differences arising from the relaxed growth and Lipschitz assumptions. Let $u^{\gamma}$ be the value of the game from Theorem \ref{thm:from3to5}. Put $\underline{u}\coloneqq \liminf_{\gamma\to0}u^\gamma$ and $\overline{u}\coloneqq \limsup_{\gamma\to0}u^\gamma$. 

Fix $(t,x)\in\RdT$. We first show that $\underline{u}\geq \overline{v}$. Let $(n,\nu)\in\cA^{opt}_{t,x}$ be an $\eta$-optimal control for $u^\gamma(t,x)$, where $\cA^{opt}_{t,x}$ is defined in \eqref{eqn:new_cA}. Notice that $\overline v(t,x)\le \sup_{\sigma\in\cT_t}\cJ_{t,x}(n,\nu,\sigma)$. Take $\tau\in\cT_t$ such that $\sup_{\sigma\in\cT_t}\cJ_{t,x}(n,\nu,\sigma)\le \cJ_{t,x}(n,\nu,\tau)+\eta$ (notice that $\tau$ depends on $(n,\nu)$ and $\eta$). Using \eqref{eqn:Lipch_h}, instead of the second inequality in \eqref{eq:splitbetalip} we get
\begin{equation}\label{eq:splitbetalip_1}
\begin{aligned}
u^{\gamma}(t,x)-\overline{v}(t,x)
&\geq-f(0)\E_x\Big[\big|X_{\tau}^{[n,\nu],\gamma}\!-\!X_{\tau}^{[n,\nu]}\big|_d\Big]\\
&\quad-K_5\E_x\Big[\int_0^{T-t}\big(1\!+\!\big|X_{s}^{[n,\nu],\gamma}\big|_d\!+\!\big|X_{s}^{[n,\nu]}\big|_d\big)^\beta\big|X_{s}^{[n,\nu],\gamma}\!-\!X_{s}^{[n,\nu]}\big|_d\,\ud s\Big]\!-\!2\eta.
\end{aligned}
\end{equation}
We bound the first term using Proposition \ref{prop:Lipschitzproc} and the second term with H\"older's inequality:
\begin{align*}
&u^{\gamma}(t,x)-\overline{v}(t,x)\notag\\
&\geq-f(0) K_3\gamma\!-\!K_5\int_0^{T-t} \!\!
\Big( \E_x \big[1\!+\!\big|X_{s}^{[n,\nu],\gamma}\big|_d\!+\!\big|X_{s}^{[n,\nu]}\big|_d\big]\Big)^{\beta}
\Big(\E_x\Big[\big|X_{s}^{[n,\nu],\gamma}\!-\!X_{s}^{[n,\nu]}\big|_d^{\frac{1}{1-\beta}}\Big]\Big)^{1-\beta}\,\ud s
\!-\!2\eta\\
&\geq -f(0)K_3\gamma-K_5T\big(1+2K_4(1+|x|_d^2)\big)^\beta K_3^{1-\beta}\gamma-2\eta,
\end{align*}
where for the second inequality, we applied \eqref{eqn:new_growth} for the first factor and Proposition \ref{prop:Lipschitzproc} for the second factor under the integral. In conclusion, there is $c>0$ such that 
\begin{align*}
u^{\gamma}(t,x)-\overline{v}(t,x)\geq -c(1+|x|_d^2)^\beta\gamma-2\eta, \qquad (t,x) \in \RdT.
\end{align*}
Taking the $\liminf$ as $\gamma\downarrow0$ we get
\begin{align}\label{eqn:uv_up}
\underline{u}(t,x)-\overline{v}(t,x)\geq-2\eta,
\end{align}
and, by the arbitrariness of $\eta$, we conclude that $u^{\gamma}(t,x)\geq \overline{v}(t,x)$.

By following analogous arguments as above, we show that there is $c > 0$ such that
\begin{align}\label{eqn:uv_low}
u^{\gamma}(t,x)-\underline{v}(t,x)\leq&\, c(1+|x|_d^2)^\beta\gamma+2\eta, \qquad (t,x) \in \RdT.
\end{align}
Taking $\limsup$ as $\gamma\downarrow0$ and, thanks to the arbitrariness of $\eta$, we get $\overline{u}(t,x)\leq \underline{v}(t,x)$. 

Since $\underline{u} \le \overline{u}$ and $\underline{v} \le \overline{v}$, the inequalities $\overline{u}\leq \underline{v}$ and  $\underline{u}\geq \overline{v}$ imply $\underline{u}=\overline{u}=\underline{v}=\overline{v}=v$, i.e., $v$ is the value of the game.
The arbitrariness of $\eta$ in \eqref{eqn:uv_up} and \eqref{eqn:uv_low} demonstrates the validity of the estimate on the difference $u^\gamma - v$ in the statement of the theorem.
\end{proof}

The remaining results of Subsection \ref{sec:gamma->0}, i.e., Corollary \ref{cor:5.2}, Lemma \ref{lem:convth} and Theorem \ref{thm:opttaustar}, still hold. We only point out that \eqref{eqn:u_gamma_est} in the proof of Theorem \ref{thm:opttaustar} takes the following form
\begin{equation}\label{eqn:new_verif_ineq}
\begin{aligned}
u^\gamma(t,x) &\le \E_x\Big[e^{-r(\theta^\gamma_*\wedge\theta_*)}v\big(t+\theta_*^\gamma\wedge\theta_*,X_{\theta_*^\gamma\wedge\theta_*-}^{[n,\nu]}\big)\!+\!\int_{0}^{\theta_*^\gamma\wedge\theta_*}\!\!\!e^{-rs}h(t+s,X_s^{[n,\nu]})\ud s\\
&\hspace{29pt}+\int_{[0,\theta_*^\gamma\wedge\theta_*)}\!\!e^{-rs}f(t+s)\,\ud \nu_s\Big]\\
&\quad + C\E_x \big[ (1+|X_{\theta_*^\gamma\wedge\theta_*-}^{[n,\nu]}|_d^2)^\beta \big] \gamma\\
&\quad+f(0)\E_x\Big[\sup_{s\in[0,T-t]}\big|X^{[n,\nu],\gamma}_{s}-X^{[n,\nu]}_{s}\big|_d\Big]\\
&\quad+ \E_x\Big[ \int_{0}^{\theta_*^\gamma\wedge\theta_*}\!\!\!e^{-rs}\big|h(t+s,X_s^{[n,\nu], \gamma}) - h(t+s,X_s^{[n,\nu]})\big|\ud s \Big],
\end{aligned}
\end{equation}
with the last term bounded as in the proof of Theorem \ref{thm:convugam_1} by
\[
K_5T\big(1+2K_4(1+|x|_d^2)\big)^\beta K_3^{1-\beta}\gamma.
\]
Denoting $\tau = \theta_*^\gamma\wedge\theta_*$, we also note that
\begin{align*}
\E_x \big[ \big(1+\big|X_{\tau-}^{[n,\nu]}\big|_d^2\big)^\beta \big]
&\le
\E_x \big[ 1+|X_{\tau-}^{[n,\nu]}|_d^2 \big]
\le 1 + \E_x \big[\big|X_{\tau-}^{[n,\nu]} - X_{\tau-}^{[e_1,0]}\big|_d^2 \big] + \E_x \big[ \big|X_{\tau-}^{[e_1,0]}\big|_d^2 \big]\\
&\le 1 + \E [ \nu_{T-t}^2 ] + c (1 + |x|_d^2),
\end{align*}
for some constant $c > 0$, where the last inequality follows from standard growth estimates for uncontrolled SDEs with Lipschitz coefficients (compare to the proof of Lemma \ref{lem:stabL1res}). In order to conclude as in Theorem \ref{thm:opttaustar}, it remains to recall that $\E [ \nu_{T-t}^2 ] < \infty$ by the definition of $\cA_t$ and that $(n,\nu)\in\cA^{opt}_{t,x}$ is fixed arbitrarily but independently of $\gamma$.

The above changes do not affect arguments in the proof of Theorem \ref{thm:opttaustar}, so its conclusions still hold. The following theorem summarises the findings of this section.
\begin{theorem}\label{thm:last} 
The assertions of Theorem \ref{thm:usolvar} hold under Assumptions \ref{ass:gen1} and \ref{ass:gen4} with the growth bound on $v$ replaced by $ 0\leq v(t,x)\leq c(1+|x|_d^2)$, for some $c>0$.
\end{theorem}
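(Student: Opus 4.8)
The plan is to assemble the ingredients prepared in this section rather than to re-prove everything from scratch. First I would note that Assumptions \ref{ass:gen1} and \ref{ass:gen4} imply \cite[Ass.\ 3.1 and 3.2]{bovo2022variational} (with quadratic, rather than linear, growth of the payoff), so for every $\gamma\in(0,1)$ Theorem \ref{thm:from3to5} provides a value $u^\gamma$ of the non-degenerate game, the gradient bound $|\nabla u^\gamma|_d\le f$, the growth estimate $0\le u^\gamma(t,x)\le c(1+|x|_d^2)$, and the optimal stopping times $\tau^\gamma_*$ and $\theta^\gamma_*$ (Lemma \ref{lem:theta*opt}). Theorem \ref{thm:convugam_1} then gives $u^\gamma\to v$ with $|u^\gamma-v|\le C(1+|x|_d^2)^\beta\gamma$, which is uniform on compact subsets of $\RdT$; hence $v=\underline v=\overline v$, $v$ is continuous as a locally uniform limit of the continuous functions $u^\gamma$, and the gradient bound $|\nabla v(t,x)|_d\le f(t)$ passes to the limit (Corollary \ref{cor:5.2}, whose proof is unchanged).

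For the growth bound, $v\ge 0$ is immediate from $g,h\ge 0$. For the upper bound I would feed the suboptimal control $(n,\nu)=(e_1,0)$ to the controller (a minimiser), so that
\[
0\le v(t,x)\le \sup_{\tau\in\cT_t}\E_x\Big[e^{-r\tau}g\big(t+\tau,X^{[e_1,0]}_\tau\big)+\int_0^\tau e^{-rs}h\big(t+s,X^{[e_1,0]}_s\big)\,\ud s\Big],
\]
and then use the linear bound $g(t,x)\le K_5(1+|x|_d)$ (noted after Assumption \ref{ass:gen4}), the quadratic bound \eqref{eq:hgrowth} on $h$, and the standard $L^2$-estimate $\E_x[\sup_{s\le T-t}|X^{[e_1,0]}_s|_d^2]\le c(1+|x|_d^2)$ for the uncontrolled SDE (\cite[Cor.\ 2.5.12]{krylov1980controlled}) to conclude $v(t,x)\le c(1+|x|_d^2)$.

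It remains to establish optimality of $\theta_*=\tau_*\wedge\sigma_*$. Fix $(n,\nu)\in\cA_t$ independently of $\gamma$. As in Lemma \ref{lem:convth} — whose proof only invokes uniform-on-compacts convergence of $u^\gamma$ to $v$, the space-Lipschitz bounds $|\nabla u^\gamma|_d,|\nabla g|_d\le f(0)$, and the a.s.\ uniform convergence $X^{[n,\nu],\gamma_k}\to X^{[n,\nu]}$ from Proposition \ref{prop:Lipschitzproc} — one extracts $\gamma_k\downarrow 0$ with $\liminf_k\theta^{\gamma_k}_*\ge\theta_*$, hence $\theta^{\gamma_k}_*\wedge\theta_*\to\theta_*$ $\P_x$-a.s. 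I would then run the verification argument of Theorem \ref{thm:opttaustar} from the modified inequality \eqref{eqn:new_verif_ineq}: the genuinely new terms are $C\gamma\,\E_x[(1+|X^{[n,\nu]}_{\theta^\gamma_*\wedge\theta_*-}|_d^2)^\beta]$ and the $h$-increment term, both of order $\gamma$ by \eqref{eqn:new_growth}, the $L^2$-bound on $\nu_{T-t}$, and Proposition \ref{prop:Lipschitzproc}, as computed in the text. Passing to the limit along $(\gamma_k)$ using dominated convergence and the left-continuity of $s\mapsto X^{[n,\nu]}_{s-}$ and of $s\mapsto\int_{[0,s)}e^{-ru}f(t+u)\,\ud\nu_u$, then splitting on $\{\sigma_*<\tau_*\}$ and $\{\sigma_*\ge\tau_*\}$ and using $|\nabla g|_d,|\nabla v|_d\le f$ to pass from $X_{\theta_*-}$ to $X_{\theta_*}$, yields $v(t,x)\le\cJ_{t,x}(n,\nu,\theta_*)$; arbitrariness of $(n,\nu)$ gives optimality.

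The main obstacle is the dominated-convergence step: unlike in Section \ref{sec:gamma->0}, the functions $g,h,v$ are no longer bounded, so I must supply a $\P_x$-integrable majorant, uniform in $k$, for $e^{-r(\theta^{\gamma_k}_*\wedge\theta_*)}v(t+\theta^{\gamma_k}_*\wedge\theta_*,X^{[n,\nu]}_{\theta^{\gamma_k}_*\wedge\theta_*-})$, the running reward and the cost term. The plan is to dominate these by an affine function of $\sup_{s\le T-t}|X^{[n,\nu]}_s|_d^2$ and $\nu_{T-t}$ (using the quadratic growth of $v$ and $h$ and $|\nabla g|_d\le f$), and to note that $\E_x[\sup_{s\le T-t}|X^{[n,\nu]}_s|_d^2]<\infty$ (Burkholder--Davis--Gundy together with $\E_x[\nu_{T-t}^2]<\infty$, the latter built into the definition of $\cA_t$), exactly as in the estimate for $\E_x[(1+|X^{[n,\nu]}_{\tau-}|_d^2)^\beta]$ displayed above in this section. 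Everything else is a line-by-line repetition of the arguments already set up in Section \ref{sec:convergence} and in the present section.
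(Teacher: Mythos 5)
Your proposal is correct and follows essentially the same route as the paper: invoke Theorem \ref{thm:from3to5} and Theorem \ref{thm:convugam_1} for existence, continuity and the gradient bound, obtain the quadratic growth bound via the suboptimal control $(e_1,0)$, and rerun Lemma \ref{lem:convth} and the verification argument of Theorem \ref{thm:opttaustar} with the modified estimate \eqref{eqn:new_verif_ineq}. Your explicit treatment of the dominated-convergence majorant (an affine function of $\sup_s|X^{[n,\nu]}_s|_d^2$ and $\nu_{T-t}$, integrable since $\E_x[\nu_{T-t}^2]<\infty$ for the fixed control) is the point the paper handles only implicitly, and you resolve it the same way the paper does.
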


Notice that conditions (i.a) and (i.b) from Assumption \ref{ass:gen1} are needed to establish the bound \eqref{eqn:new_growth}. We recall that it is not possible to bound the second moment of $X^{[n, \nu], \gamma}_\tau$ without controlling $\E_x[\nu^2_\tau]$ and the form of the functional $\cJ_{t,x}(n,\nu,\tau)$ allows us only to bound $\E_x[\nu_{T-t}]$ as in \eqref{eqn:new_cA} (cf.\ proof of \cite[Lem.~3.1]{bovo2023degcontrols}). We use \eqref{eqn:new_growth} in the proof of Theorem \ref{thm:convugam_1} due to the form of the non-uniform Lipschitz property \eqref{eqn:Lipch_h} of $h$. If $h$ is Lipschitz continuous (say with constant $K_5$), the second term on the right-hand side of the inequality in \eqref{eq:splitbetalip_1} simplifies to $-K_5\gamma$. Then the arguments of the proof follow without the need for \eqref{eqn:new_growth} and result in the bound
\[
 |u^\gamma(t,x) - v(t,x)| \le C \gamma
\]
for some constant $C > 0$.

The bound \eqref{eqn:new_growth} is also used to bound the last term on the right-hand side of \eqref{eqn:new_verif_ineq}, but, similarly as above, Lipschitz property of $h$ changes the bound to $c \gamma$ for some constant $c>0$. We also note that the proof of Proposition \ref{prop:Lipschitzproc} uses only the Lipschitz property of $\sigma$.

\begin{corollary}\label{cor:final}
Under Assumption \ref{ass:gen4}, if $h$ is Lipschitz continuous then Assumption \ref{ass:gen1} without conditions (i.a) and (i.b) is sufficient in order to prove the assertions of Theorem \ref{thm:last}.
\end{corollary}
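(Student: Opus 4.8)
The plan is to re-examine the argument of Section~\ref{sec:ext} and to isolate every step in which conditions (i.a) or (i.b) of Assumption~\ref{ass:gen1} are invoked. As observed there, in Section~\ref{sec:ext} those conditions enter \emph{only} through Lemma~\ref{lem:stabL1res}, i.e.\ through the uniform growth bound \eqref{eqn:new_growth}, and \eqref{eqn:new_growth} in turn is used only to absorb the \emph{non-uniform} Lipschitz estimate \eqref{eqn:Lipch_h} of $h$. I would first record that the remaining ingredients survive the removal of (i.a)/(i.b): the existence of the approximating values $u^\gamma$ via Theorem~\ref{thm:from3to5} rests on \cite[Ass.~3.1, 3.2]{bovo2022variational} and the non-degeneracy of $\cL^\gamma$, both guaranteed by Assumption~\ref{ass:gen1} stripped of (i.a)/(i.b) together with Assumption~\ref{ass:gen4} and the $\gamma$-perturbation; Proposition~\ref{prop:Lipschitzproc} uses only the Lipschitz continuity of $b$ and $\sigma$; and Lemmas~\ref{lem:L1bnd} and \ref{lem:theta*opt} use only the positivity of $f$ and the (quadratic) growth of $g$ and $h$.

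I would then re-run the proof of Theorem~\ref{thm:convugam_1} under the extra hypothesis that $h$ is globally Lipschitz with constant $K_5$. The only modification occurs where \eqref{eqn:new_growth} was previously needed: in \eqref{eq:splitbetalip_1}, the term governed by \eqref{eqn:Lipch_h} now reads $-K_5\,\E_x\big[\int_0^{T-t}|X_s^{[n,\nu],\gamma}-X_s^{[n,\nu]}|_d\,\ud s\big]$, which by Proposition~\ref{prop:Lipschitzproc} (with $p=1$) is at least $-K_5 T K_3\gamma$; the symmetric bound handles the lower value. Hence $|u^\gamma(t,x)-v(t,x)|\le C\gamma$ \emph{uniformly} on $\RdT$, which yields the existence of the value $v=\lim_{\gamma\to 0}u^\gamma$, its continuity as a uniform limit of the continuous functions $u^\gamma$, the gradient bound $|\nabla v|_d\le f$ inherited from $|\nabla u^\gamma|_d\le f$ as in Corollary~\ref{cor:5.2}, and the quadratic growth $0\le v(t,x)\le c(1+|x|_d^2)$ inherited from the corresponding bound on $u^\gamma$ in Theorem~\ref{thm:from3to5}.

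Next I would redo the optimality argument for $\theta_*=\tau_*\wedge\sigma_*$. Lemma~\ref{lem:convth} transfers verbatim, since it uses only the uniform convergence $u^\gamma\to v$, the bounds $|\nabla u^\gamma|_d,|\nabla g|_d\le f(0)$, and $X^{[n,\nu],\gamma}\to X^{[n,\nu]}$ from Proposition~\ref{prop:Lipschitzproc}. In the adaptation of Theorem~\ref{thm:opttaustar} the inequality \eqref{eqn:new_verif_ineq} still holds, but now its last term is bounded by $K_5 T K_3\gamma$ using the Lipschitz property of $h$ and Proposition~\ref{prop:Lipschitzproc} (no recourse to \eqref{eqn:new_growth}), while the $C\gamma$-term is controlled via $\E[\nu_{T-t}^2]<\infty$ — valid for every $(n,\nu)\in\cA_t$ — together with standard moment estimates for the uncontrolled SDE. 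Passing to the limit along the sequence $(\gamma_k)$ from Lemma~\ref{lem:convth}, the dominated convergence theorem applies: under Assumption~\ref{ass:gen4} the relevant integrands are dominated by $C(1+\sup_{s\in[0,T-t]}|X_s^{[n,\nu]}|_d^2)+f(0)\,\nu_{T-t}$, which is integrable because $\E_x[\sup_{s}|X_s^{[n,\nu]}|_d^2]<\infty$ under the finite second moment of $\nu$. Splitting over $\{\sigma_*<\tau_*\}$ and $\{\sigma_*\ge\tau_*\}$ and using $|\nabla g|_d,|\nabla v|_d\le f$ exactly as in Theorem~\ref{thm:opttaustar} yields $v(t,x)\le\cJ_{t,x}(n,\nu,\theta_*)$, and arbitrariness of $(n,\nu)\in\cA_t$ gives the optimality of $\theta_*$.

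The only genuine difficulty I anticipate is bookkeeping: one must check that \emph{every} appeal in Section~\ref{sec:ext} to the uniform bound of Lemma~\ref{lem:stabL1res} (equivalently \eqref{eqn:new_growth}) — the sole consequence of (i.a)/(i.b) exploited there — is replaced either by the global Lipschitz continuity of $h$ or by the a priori property $\E_x[\nu_{T-t}^2]<\infty$ encoded in $\cA_t$. The subtlest of these replacements is the dominated convergence step in the adapted Theorem~\ref{thm:opttaustar}: lacking a control-uniform moment bound, it must be carried out for the fixed pair $(n,\nu)$, which is legitimate precisely because $(n,\nu)$ is chosen independently of $\gamma$ and satisfies $\E_x[\nu_{T-t}^2]<\infty$.
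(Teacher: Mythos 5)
Your proposal is correct and follows essentially the same route as the paper: it isolates Lemma \ref{lem:stabL1res} (i.e.\ the bound \eqref{eqn:new_growth}) as the sole consequence of conditions (i.a)/(i.b) used in Section \ref{sec:ext}, and observes that its two appearances — absorbing the non-uniform Lipschitz estimate \eqref{eqn:Lipch_h} in the proof of Theorem \ref{thm:convugam_1} and bounding the last term of \eqref{eqn:new_verif_ineq} — are both replaced by the global Lipschitz continuity of $h$ combined with Proposition \ref{prop:Lipschitzproc}, while the remaining integrability needed for dominated convergence comes from $\E_x[\nu_{T-t}^2]<\infty$ for the fixed control. The only (harmless) redundancy is that once $h$ is Lipschitz the convergence $|u^\gamma-v|\le C\gamma$ is uniform, so the $C\gamma$-term in \eqref{eqn:new_verif_ineq} no longer needs a moment estimate at all.
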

\medskip

\section*{Statements and Declarations}

\noindent{\bf Acknowledgements}: The authors would like to thank an anonymous referee for careful reading and valuable feedback on the manuscript.

\noindent{\bf Funding}: A.\ Bovo and T.\ De Angelis were partially supported by EU -- Next Generation EU -- PRIN2022 (project ID: 2022BEMMLZ) {\em Stochastic control and games and the role of information}. T.\ De Angelis and J.\ Palczewski were partially supported by University of Warsaw grant IDUB-POB3-D110-003/2022 and Simons Semester ``Stochastic Modelling and Control'' at IMPAN, Warsaw, Poland in May-June 2023.

\bibliographystyle{plain}
\bibliography{Bibliography}

\end{document}